\newtheorem{Th}{Theorem}
\newtheorem{Lem}{Lemma}
\newtheorem{Cor}{Corollary}
\begin{document}
\thispagestyle{empty}

\title[]{Resonances in asymptotically autonomous systems with a decaying chirped-frequency excitation}
\author{Oskar A. Sultanov}

\address{
Chebyshev Laboratory, St. Petersburg State University, 14th Line V.O., 29, Saint Petersburg 199178 Russia.}
\email{oasultanov@gmail.com}


\maketitle
{\small

{\small
\begin{quote}
\noindent{\bf Abstract.} The influence of oscillatory perturbations on autonomous strongly nonlinear systems in the plane is investigated. It is assumed that the intensity of perturbations decays with time, and their frequency increases according to a power law. The long-term behaviour of perturbed trajectories is discussed. It is shown that, depending on the structure and the parameters of perturbations, there are at least two different asymptotic regimes: a phase locking and a phase drifting. In the case of phase locking, resonant solutions with an unlimitedly growing energy occur. The stability and asymptotics at infinity of such solutions are investigated. The proposed analysis is based on a combination of the averaging technique and the method of Lyapunov functions.

\medskip

\noindent{\bf Keywords: }{asymptotically autonomous system, chirped-frequency, damped perturbation, phase locking, stability, asymptotics}

\medskip
\noindent{\bf Mathematics Subject Classification: }{34D10, 34D20, 34C15, 37J65}
\end{quote}
}

\section{Introduction}

In this paper, the effect of oscillating perturbations on autonomous Hamiltonian systems is investigated. It is assumed that the intensity of perturbations decays with time and the limiting system describes strongly nonlinear oscillations. The conditions of existence and stability of resonant solutions with growing energy are discussed.

Qualitative properties of solutions for asymptotically autonomous systems have previously been studied in many papers. It is known, in particular, that if the limiting system is asymptotically stable, the trajectories of the perturbed system remain in some neighbourhood of a stable solution~\cite{RB53,LM56}. See also~\cite{LDP74}, where the conditions were described under which decaying nonlinear disturbances do not violate the behaviour of solutions to linear autonomous oscillating systems. In the general case, the long-term behaviour of solutions to asymptotically autonomous systems can differ from the dynamics described by the corresponding autonomous systems~\cite{HRT94}. It depends on the qualitative properties of solutions to the limiting system and the structure of decaying perturbations~\cite{HRT92,LRS02,KS05,MR08,LH19}.

Asymptotically autonomous systems with oscillatory decreasing perturbations have been studied in several papers, where either linear equations were considered~\cite{DF78,AK96,PN06,BN10,ML14}, or the behaviour of solutions in some vicinity of the equilibrium was discussed~\cite{OS20}. In the present paper, the effect of damped oscillatory disturbances on nonlinear systems far from equilibrium is investigated.

Note that the influence of small oscillatory perturbations on dynamical systems is well-studied problem~\cite{BM61,Hap93,AKN06,VB07,GKT17}. In particular, chirped-frequency perturbations with a small parameter are effectively used to control the dynamics of nonlinear systems~\cite{LFJPA08,LKRMS08,GMetal17,LFAGS20,AK20}. However, in this paper the presence of a small parameter is not assumed. We consider strongly nonlinear systems in the plane with chirped-frequency oscillatory perturbations vanishing at infinity in time. To the best of our knowledge, bifurcations in such systems have not been thoroughly investigated.

The paper is organized as follows. In section~\ref{sec1}, the mathematical formulation of the problem is given and the class of decreasing perturbations is described. First we construct a change of variables that simplifies the perturbed system in the leading asymptotic terms. The construction of this transformation is described in section~\ref{sec2}. Depending on the structure of the simplified equations there are at least two asymptotic regimes for solutions of the perturbed system: a phase locking and a phase drifting. Such regimes are described in section~\ref{sec3}. Nonlinear stability analysis of the phase locking is discussed in section~\ref{sec4}. In section~\ref{secEX}, the proposed theory is applied to the examples of non-autonomous systems with decaying oscillatory perturbations. The paper concludes with a brief discussion of the results obtained.

\section{Problem statement}
\label{sec1}
Consider the asymptotically autonomous system in the plane:
\begin{gather}
\label{FulSys}
\frac{dx}{dt}=\partial_y H(x,y)+t^{-\frac{a}{q}} f(x,y,S(t),t), \quad \frac{dy}{dt}=-\partial_x H(x,y)+t^{-\frac{a}{q}} g(x,y,S(t),t), \quad t>0,
\end{gather}
with $S(t)=s t^{1+b/q}$ and the parameters $a,b,q\in\mathbb Z$, $s\in\mathbb R$ such that $1\leq a,b\leq q$, $s>0$. It is assumed that the functions $H(x,y)$, $f(x,y,S,t)$ and $g(x,y,S,t)$, defined for all $(x,y,S)$ in $\mathbb R^3$, $t>0$, are infinitely differentiable and  $2\pi$-periodic functions with respect to $S$. The Hamiltonian $H(x,y)$ of the corresponding limiting autonomous system
\begin{gather}
    \label{LimSys}
        \frac{dx}{dt}=\partial_y H(x,y), \quad \frac{dy}{dt}=-\partial_x H(x,y)
\end{gather}
is assumed to have the following form
\begin{gather*}
H(x,y)= \frac{y^2}{2}+U(x), \quad U(x)=\frac{x^{2h}}{2h} +\sum_{i=1}^{2h-1}u_i x^i
\end{gather*}
with $h\in\mathbb Z$, $h\geq 2$ and $u_i={\hbox{\rm const}}$. In this case, there exists $E_0>0$ such that for all $E>E_0$ the level lines $\{(x,y)\in\mathbb R^2: H(x,y)=E\}$ are closed curves on the phase space $(x,y)$ parameterized by the parameter $E$ and do not contain any fixed points of system \eqref{LimSys}. Let $x_-(E)<0<x_+(E)$ be the solutions of the equation $U(x)= E$ with $E>E_0$. Then, to each closed curve there correspond a periodic solution $x_0(t,E)$, $y_0(t,E)$ of system \eqref{LimSys} with a period
\begin{gather*}
    T(E)\equiv \int\limits_{x_-(E)}^{x_+(E)} \frac{\sqrt 2 d\varsigma}{\sqrt{E-U(\varsigma)}}=\kappa E^{\frac{1-h}{2h}} \big(1+\mathcal O(E^{-\frac{1}{2h}})\big), \quad
    E\to\infty, \quad \kappa =\sqrt2 (2h)^{\frac{1}{2h}} \int\limits_{-1}^{1} \frac{d\varsigma}{\sqrt{1-\varsigma^{2h}}}.
\end{gather*}

The perturbations of the autonomous system \eqref{LimSys} are described by the functions with power-law asymptotics:
\begin{gather}
\label{fg}
    f(x,y,S,t)=\sum_{k=0}^\infty t^{-\frac{k}{q}} f_k(x,y,S), \quad
    g(x,y,S,t)=\sum_{k=0}^\infty t^{-\frac{k}{q}} g_k(x,y,S), \quad t\to\infty,
\end{gather}
with
\begin{align*}
        f_k(x,y,S)= \sum_{i=0}^{p} \sum_{j=0}^{l-1}A_{k,i,j}(S) x^{i} y^j, \quad
         g_k(x,y,S)=\sum_{i=0}^{p} \sum_{j=0}^{l}  B_{k,i,j}(S) x^{i} y^j,
\end{align*}
where $l,p\in\mathbb Z$, $0\leq l\leq p\leq 2h-1$ and the coefficients $A_{k,i,j}(S)$, $B_{k,i,j}(S)$ are $2\pi$-periodic with respect to $S$. It is assumed that $A_{k,i,j}(S)\equiv B_{k,i,j}(S)\equiv 0$ if $i+j>p$ and $A_{k,i,-1}(S)\equiv 0$ for all $k,i\geq 0$. The parameter $p$ is responsible for the maximum degree of the monomials $x^i y^j$ in the perturbations with nonzero coefficients, while the parameter $l$ corresponds to a maximum power of $y$. It is also assumed that these parameters satisfy the following inequalities:
\begin{gather}\label{rc}
-1\leq \sigma < \frac{b}{q}, \quad \sigma:=\frac{b}{q}\Big(l-1+\frac{p-1}{h-1}\Big)-\frac{a}{q}.
\end{gather}
The role of this condition will be specified below.

Note that decreasing perturbations with power-law asymptotics appear, for example, in the study of Painlev\'{e} equations~\cite{IKNF06,BG08}, phase-locking phenomena~\cite{LK14,DJD19,OS21}, stochastic perturbations~\cite{AGR09,OS19}, and in a wide range of other problems associated with nonlinear non-autonomous systems~\cite{BAD04,KF13}.

The simplest example is given by the perturbed Duffing oscillator:
\begin{gather}
\label{ex}
\frac{d^2x}{dt^2}-x+x^3=B t^{-\frac{1}{3}}\cos  \big(s t^{\frac 43}\big).
\end{gather}
It is readily seen that equation \eqref{ex} in the variables $x,y= \dot x$ takes form \eqref{FulSys} with $a=b=1$, $q=3$, $h=2$, $U(x)\equiv x^4/4-x^2/2$, and the perturbations $f\equiv 0 $, $g\equiv B\cos S$ satisfy \eqref{fg} and \eqref{rc} with $l=p=0$, $\sigma=-1$. Note that all trajectories of the corresponding limiting system \eqref{LimSys} are bounded, and the solutions with $H(x(t),y(t))\equiv E>0$ are periodic with the period $T(E)=\mathcal O( E^{-1/4})$ as $E\to\infty$ (see Fig.~\ref{Fig1}, a). Numerical analysis of equation \eqref{ex} with $B\neq 0$ shows that oscillatory decreasing perturbations can lead to the appearance of solutions with unboundedly growing energy $I(t)\equiv H(x(t),\dot x(t))$. Besides, for some values of the parameters, the solutions of the perturbed equation have the same long-term behaviour as the trajectories of the limiting system: $I(t)=\mathcal O(1)$ as $t\to\infty$ (see Fig.~\ref{Fig1}, b).

In this paper, we investigate the existence and stability of resonant solutions with unboundedly growing energy for system \eqref{FulSys} with decreasing oscillatory perturbations satisfying \eqref{fg} and \eqref{rc}.

\begin{figure}
\centering
\subfigure[ ]{\includegraphics[width=0.4\linewidth]{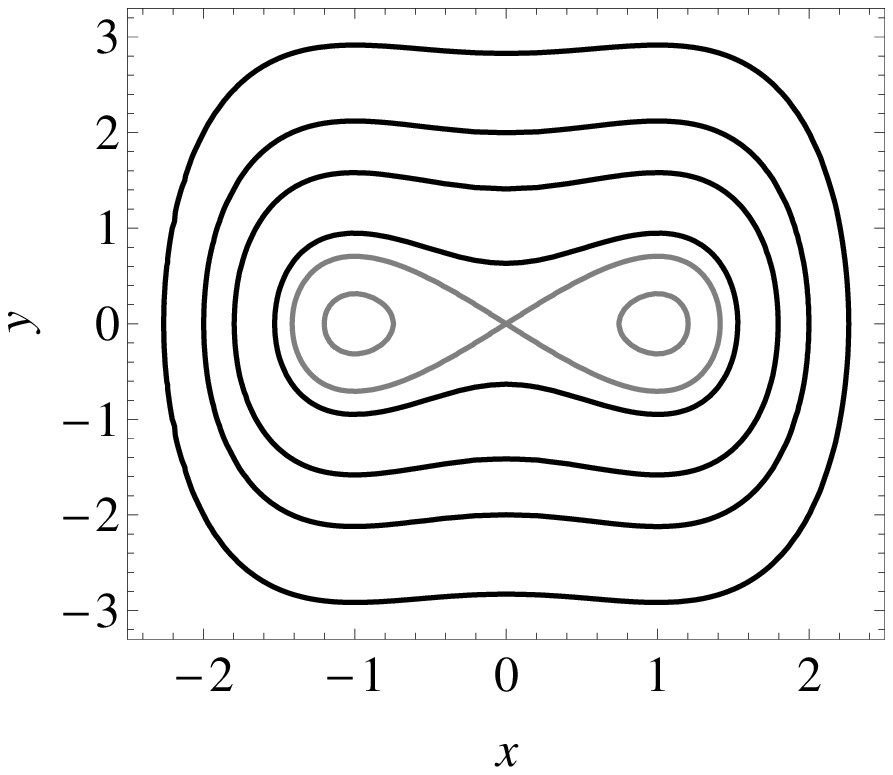}}
\hspace{4ex}
\subfigure[]{\includegraphics[width=0.4\linewidth]{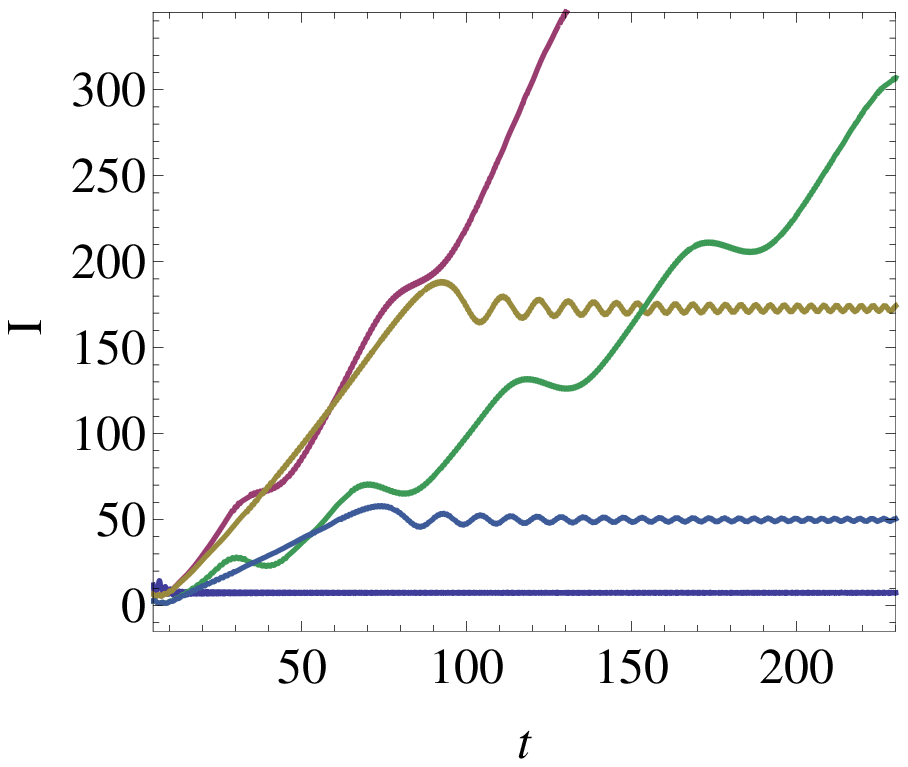}}
\caption{\footnotesize (a) The level lines of $H(x,y)$ for equation \eqref{ex} with $B=0$. The black curves correspond to solutions with $E>0$. The gray curves correspond to solutions with $E\leq 0$. (b) The evolution of $I(t)=H(x(t),\dot x(t))$ for solutions of \eqref{ex} with different values of the parameters $B,s$ and initial data.} \label{Fig1}
\end{figure}

\section{Change of variables}
\label{sec2}

In this section, we construct the transformations of variables that simplify system \eqref{FulSys}.

\subsection{Energy-angle variables}
First, define auxiliary $2\pi$-periodic functions $X(\phi,E)=x_0(\phi/\omega(E),E)$ and $Y(\phi,E)=y_0(\phi/\omega(E),E)$ with $\omega(E)=2\pi/T(E)>0$ for all $E>E_0$. It follows easily that
\begin{gather}
    \label{HamSys}
    \omega(E)\frac{\partial X}{\partial \phi}=\partial_Y H(X,Y), \quad
    \omega(E)\frac{\partial Y}{\partial \phi}=-\partial_X H(X,Y), \quad  H(X(\phi,E),Y(\phi,E))\equiv E.
\end{gather}
Application of the averaging method~\cite{BM61,AKN06} to system \eqref{HamSys} yields the following asymptotic expansions as $E\to\infty$:
\begin{gather}
\label{XYw}
  X(\phi,E)=E^{\frac{1}{2h}}\sum_{j=0}^\infty E^{-\frac{j}{2h}} X_j(\phi), \quad
  Y(\phi,E)=E^{\frac{1}{2}}\sum_{j=0}^\infty E^{-\frac{j}{2h}} Y_j(\phi), \quad \omega(E)=E^{\frac{h-1}{2h}}\sum_{j=0}^\infty E^{-\frac{j}{2h}} \omega_j
\end{gather}
with $2\pi$-periodic coefficients $X_j(\phi)$, $Y_j(\phi)$ and $\omega_j={\hbox{\rm const}}$. Let $\xi(\alpha,J)$, $\eta(\alpha,J)$ be a family of $2\pi$-periodic solutions of the system \begin{gather*}
    \chi(J)\frac{\partial\xi}{\partial \alpha}=\eta, \quad \chi(J)\frac{\partial\eta}{\partial \alpha}=-\xi^{2h-1}, \quad \frac{\eta^2}{2}+\frac{\xi^{2h}}{2h}=J>0, \quad \chi(J)= \frac{2\pi}{  \kappa}J^{\frac{h-1}{2h}}.
\end{gather*}
Then, it is not hard to check that
\begin{gather}\label{omega}
    \omega_0 = \chi(1),\quad \omega_1=0,\quad  \omega_2=u_{2h-2} \big\langle\partial_J  (\xi^{2h-2} \chi^{-1}   )\big \rangle\big|_{J=1}+\frac{u^2_{2h-1} }{2} \big\langle\xi ^{4h-2}\partial_J^2\chi \big\rangle\big|_{J=1},
\end{gather}
and
\begin{equation}\label{XY}
\begin{aligned}
   \begin{pmatrix}  X_0 \\ Y_0 \end{pmatrix} =  & \begin{pmatrix}  \xi(\phi,1) \\ \eta(\phi,1) \end{pmatrix},\quad
    \begin{pmatrix}  X_1 \\ Y_1 \end{pmatrix} =   \alpha_1(\phi) \begin{pmatrix}  \partial_\alpha\xi(\phi,1) \\ \partial_\alpha\eta(\phi,1) \end{pmatrix} +J_1(\phi) \begin{pmatrix}  \partial_J\xi(\phi,1) \\ \partial_J\eta(\phi,1) \end{pmatrix},\\
     \begin{pmatrix}  X_2 \\ Y_2 \end{pmatrix} =  & \alpha_2(\phi) \begin{pmatrix}  \partial_\alpha\xi(\phi,1) \\ \partial_\alpha\eta(\phi,1) \end{pmatrix} +\big(J_2(\phi)+\partial_\alpha J_1(\phi) \alpha_1(\phi)\big) \begin{pmatrix}  \partial_J\xi(\phi,1) \\ \partial_J\eta(\phi,1) \end{pmatrix}\\
      &  +\frac{\alpha_1^2(\phi)}{2} \begin{pmatrix}  \partial_\alpha^2\xi(\phi,1) \\ \partial_\alpha^2\eta(\phi,1) \end{pmatrix} + \alpha_1(\phi) J_1(\alpha) \begin{pmatrix}  \partial_\alpha\partial_J\xi(\phi,1) \\ \partial_\alpha\partial_J\eta(\phi,1) \end{pmatrix}+\frac{J_1^2(\phi)}{2} \begin{pmatrix}  \partial_J^2\xi(\phi,1) \\ \partial_J^2\eta(\phi,1) \end{pmatrix},
      \end{aligned}
\end{equation}
where
\begin{equation*}
\begin{aligned}
    \alpha_1(\phi)=&   \alpha_1^0+\omega_0 u_{2h-1}\int\limits_0^\phi \partial_J \big(\xi^{2h-1} \chi^{-1}  \big)\big|_{J=1}\,d\varsigma, \quad
    J_1(\phi)=-u_{2h-1} \xi^{2h-1}\big|_{J=1}, \\
    \alpha_2(\phi)=& \alpha_2^0+  \chi''(1)  \frac{u^2_{2h-1}}{2} \int\limits_0^\phi \big(\xi^{4h-2}  -\big\langle \xi^{4h-2}  \big\rangle\big)\big|_{J=1} \,d\varsigma +\int\limits_0^\phi \alpha_1''\alpha_1-\big\langle \alpha_1''\alpha_1\big\rangle\,d\varsigma\\
                 & + \omega_0 u_{2h-2}\int\limits_0^\phi \Big(\partial_J  (\xi^{2h-2} \chi^{-1}   )  -\big\langle \partial_J  (\xi^{2h-2} \chi^{-1}   ) \big\rangle\Big)\Big|_{J=1} \,d\varsigma,\\
    J_2(\phi)=&-\left(u_{2h-2}  \xi  ^{2h-2}+u_{2h-1} J_1(\phi)\partial_J \xi^{2h-1} \right)\Big|_{J=1}.
    \end{aligned}
\end{equation*}
The parameters $\alpha_1^0$, $\alpha_2^0$ are chosen such that
\begin{gather*}
  \langle \alpha_i(\phi)\rangle:=\frac{1}{2\pi}\int\limits_{0}^{2\pi}\alpha_i(\varsigma)\,d\varsigma=0.
\end{gather*}

The functions $X(\phi,E)$, $Y(\phi,E)$ are used for rewriting system \eqref{FulSys} in the energy-angle variables. In particular, the change of variables
\begin{gather}
\label{exch1}
    x(t)=X\big(\phi(t),I(t)\big), \quad y(t)=Y\big(\phi(t),I(t)\big)
\end{gather}
transforms system \eqref{FulSys} into the form:
\begin{gather}
\label{IPS}
    \frac{dI}{dt}=t^{-\frac{a}{q}}F(I,\phi,t), \quad \frac{d\phi}{dt}=\omega(I)+t^{-\frac{a}{q}}G(I,\phi,t),
\end{gather}
where
\begin{eqnarray*}
    F(I,\phi,t)&\equiv &f\big(X(\phi,I),Y(\phi,I),S(t),t\big) U'\big(X(\phi,I)\big)+g\big(X(\phi,I),Y(\phi,I),S(t),t\big) Y(\phi,I), \\
    G(I,\phi,t)&\equiv&\omega(I)\Big(f\big(X(\phi,I),Y(\phi,I),S(t),t\big)\partial_E Y(\phi,I)-g\big(X(\phi,I),Y(\phi,I),S(t),t\big)\partial_E X(\phi,I)\Big).
\end{eqnarray*}
Moreover, it follows from \eqref{fg} that
\begin{gather}\label{FG}
    F(I,\phi,t) =\sum_{k=0}^\infty t^{-\frac kq} F_k(I,\phi,S(t)), \quad
    G(I,\phi,t)=\sum_{k=0}^\infty t^{-\frac kq} G_k(I,\phi,S(t)), \quad t\to\infty,
\end{gather}
where the coefficients
\begin{eqnarray*}
 F_k(I,\phi,S)&\equiv &f_k\big(X(\phi,I),Y(\phi,I),S\big) U'\big(X(\phi,I)\big)+g_k\big(X(\phi,I),Y(\phi,I),S\big) Y(\phi,I), \\
 G_k(I,\phi,S)&\equiv&\omega(I)\Big(f_k\big(X(\phi,I),Y(\phi,I),S\big)\partial_E Y(\phi,I)-g_k\big(X(\phi,I),Y(\phi,I),S\big)\partial_E X(\phi,I)\Big)
\end{eqnarray*}
are $2\pi$-periodic functions with respect to $\phi$ and $S$.
Define
\begin{align*}
  &\tilde F_k(I,\phi,S) \equiv  I^{-1-\frac{p-1+(l-1)(h-1)}{2h}}F_k(I,\phi,S), &\quad& \tilde F(I,\phi,t) \equiv  I^{-1-\frac{p-1+(l-1)(h-1)}{2h}}F(I,\phi,t),\\
   & \tilde G_k(I,\phi,S) \equiv  I^{-\frac{p-1+(l-1)(h-1)}{2h}}G_k(I,\phi,S), &\quad&
    \tilde G(I,\phi,t) \equiv  I^{-\frac{p-1+(l-1)(h-1)}{2h}}G(I,\phi,t).
\end{align*}
Then, taking into account \eqref{XYw}, we have the following asymptotics:
\begin{gather}
\label{FGk}
\tilde F_k(I,\phi,S)=\sum_{d=0}^\infty \tilde F_{k,d}(\phi,S) I^{-\frac{d}{2h}}, \quad
\tilde G_k(I,\phi,S)=\sum_{d=0}^\infty \tilde G_{k,d}(\phi,S) I^{-\frac{d}{2h}}, \quad I\to\infty,
\end{gather}
with $2\pi$-periodic coefficients:
\begin{eqnarray*}
  \tilde F_{k,d}(\phi,S)&=&\sum_{(i,j,i_1,j_1,i_2,j_2)\in\mathcal X_d } \delta_{j_2,0}\big(X_0(\phi)\big)^{p-l-i} \big(Y_0(\phi)\big)^{l-1-j}\\
  &&\times\Big( (2h-i_2)u_{2h-i_2}  A_{k,p-l+1-i,l-1-j}(S)\tilde X_{p-l-i+2h-i_2,i_1}(\phi)\tilde Y_{l-1-j,j_1}(\phi)\big(X_0(\phi)\big)^{2h-i_2} \\
  &&+ \delta_{i_2,0} B_{k,p-l-i,l-j}(S)\tilde X_{p-l-i,i_1}(\phi)\tilde Y_{l+1-j,j_1}(\phi)\big(Y_0(\phi)\big)^2\Big),\\
  \tilde G_{k,d}(\phi,S)&=&\sum_{(i,j,i_1,j_1,i_2,j_2)\in\mathcal X_d} \frac{ \omega_{j_2}}{2h}\big(X_0(\phi)\big)^{p-l-i} \big(Y_0(\phi)\big)^{l-1-j}\\
  &&\times\Big( (h-i_2) A_{k,p-l+1-i,l-1-j}(S)\tilde X_{p-l+1-i,i_1}(\phi)\tilde Y_{l-1-j,j_1}(\phi) X_0(\phi)Y_{i_2}(\phi)  \\
  &&-  (1-i_2) B_{k,p-l-i,l-j}(S)\tilde X_{p-l-i,i_1}(\phi)\tilde Y_{l-j,j_1}(\phi) Y_0(\phi)X_{i_2}(\phi)  \Big),
\end{eqnarray*}
where $\delta_{i,0}$ is the Kronecker delta,
\begin{align*}
  \mathcal X_d=\big\{(i,j,i_1,j_1,i_2,j_2)\in\mathbb Z^6:\,
  0\leq i+j\leq p,\, 0 \leq j\leq l,\,i_1,j_1,i_2,j_2\geq 0, \\
  i+j+(h-1)j+i_1+j_1+i_2+j_2=d\big\}.
\end{align*}
The functions $\tilde X_{n,i}(\phi)$, $ \tilde Y_{n,i}(\phi)$ denote the coefficients of the asymptotic expansions:
\begin{gather*}
 I^{-\frac{n}{2h}}\left(\frac{X(\varphi,I)}{ X_0(\varphi)}\right)^n=\sum_{i=0}^\infty \tilde X_{n,i}(\phi) I^{-\frac{i}{2h}}, \quad
I^{-\frac{n}{2}}\left(\frac{Y(\varphi,I)}{Y_0(\varphi)}\right)^n=\sum_{i=0}^\infty \tilde Y_{n,i}(\phi) I^{-\frac{i}{2h}}, \quad I\to\infty.\end{gather*}
For example, $\tilde X_{0,0} =1$, $\tilde X_{0,i}=0$, $\tilde X_{n,0}=1$, $\tilde X_{n,1}=n X_1 /X_0 $, $\tilde X_{n,2}=n X_2 /X_0 +n(n-1) (X_1 /X_0)^2/2$ for all $n,i\neq 0$. It is assumed that $u_{2h}=1/(2h)$, $u_{2h-i_2}= 0$ for all $i_2\geq 2h$ and $A_{k,i,-1}(S)\equiv 0$ for all $k,i\geq 0$.

From \eqref{HamSys} it follows that
\begin{gather*}
\det \frac{\partial (X,Y)}{\partial (\phi,E)} =  \begin{vmatrix}
        \partial_\phi X & \partial_E X\\
        \partial_\phi Y& \partial_E Y
    \end{vmatrix} =  \frac{1}{\omega(E)}> 0  \quad \forall \, E>E_0.
\end{gather*}
Hence, the transformation \eqref{exch1} is invertible for all $I>E_0$ and $\phi\in \mathbb R$. Define $\mathcal D({E_0})=\{(x,y)\in\mathbb R^2: H(x,y)>E_0\}$. Then, we have the following.
\begin{Lem}\label{Lem1}
For all $(x,y)\in \mathcal D({E_0})$ and $t>0$ system \eqref{FulSys} can be transformed into \eqref{IPS} by the transformation \eqref{exch1}.
\end{Lem}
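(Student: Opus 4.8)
The plan is to verify the statement by a direct change-of-variables computation, since the invertibility of \eqref{exch1} on $\mathcal D(E_0)$ has already been secured by the Jacobian identity $\det\partial(X,Y)/\partial(\phi,E)=1/\omega(E)>0$ established just above. First I would differentiate the defining relations \eqref{exch1} with respect to $t$ by the chain rule,
\[
\dot x=\partial_\phi X\,\dot\phi+\partial_E X\,\dot I,\qquad
\dot y=\partial_\phi Y\,\dot\phi+\partial_E Y\,\dot I,
\]
where all partial derivatives of $X,Y$ are evaluated at $(\phi(t),I(t))$. Equating these left-hand sides with the right-hand sides of \eqref{FulSys} (with $\partial_x H,\partial_y H$ taken at $(x,y)=(X,Y)$) yields a $2\times2$ linear system for the unknown derivatives $(\dot\phi,\dot I)$ whose coefficient matrix is exactly the Jacobian $\partial(X,Y)/\partial(\phi,E)$.

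Since this matrix is nonsingular with determinant $1/\omega(I)$, I would solve for $\dot I$ and $\dot\phi$ by Cramer's rule, the simplifications coming from the two relations in \eqref{HamSys}. For the energy equation, substituting $\partial_\phi X=\omega(I)^{-1}\partial_y H$ and $\partial_\phi Y=-\omega(I)^{-1}\partial_x H$ into the Cramer expression cancels the factor $\omega(I)$ and annihilates the autonomous cross terms $\partial_x H\,\partial_y H$, leaving
\[
\dot I=t^{-\frac aq}\big(U'(x)\,f+y\,g\big)=t^{-\frac aq}F(I,\phi,t),
\]
which matches the stated definition of $F$ because $\partial_x H=U'(x)$, $\partial_y H=y$, and $x=X(\phi,I)$, $y=Y(\phi,I)$.

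For the angle equation, the analogous Cramer computation produces a term $\omega(I)\big(\partial_x H\,\partial_E X+\partial_y H\,\partial_E Y\big)$ together with the perturbative remainder. Here I would invoke the identity obtained by differentiating $H(X(\phi,E),Y(\phi,E))\equiv E$ from \eqref{HamSys} with respect to $E$, namely $\partial_x H\,\partial_E X+\partial_y H\,\partial_E Y=1$; this collapses the first term to $\omega(I)$ and leaves
\[
\dot\phi=\omega(I)+t^{-\frac aq}\,\omega(I)\big(f\,\partial_E Y-g\,\partial_E X\big)=\omega(I)+t^{-\frac aq}G(I,\phi,t),
\]
again in agreement with the definition of $G$. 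The resulting system is precisely \eqref{IPS}.

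I do not expect a genuine obstacle: the argument is a routine, if bookkeeping-heavy, verification. The only points requiring care are that the transformation be globally well defined as a map from $\mathcal D(E_0)$ onto the $(\phi,I)$ domain with $I>E_0$ — which follows from the already-established positivity of the Jacobian, the $2\pi$-periodicity of $X,Y$ in $\phi$, and the monotone parameterization of the level curves by $E$ — and the correct use of the two identities coming from \eqref{HamSys}: one for $\partial_\phi X,\partial_\phi Y$ and one, obtained by $E$-differentiation of $H(X,Y)\equiv E$, for the mixed products $\partial_x H\,\partial_E X+\partial_y H\,\partial_E Y$.
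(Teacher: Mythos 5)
Your proposal is correct and follows exactly the route the paper takes implicitly: the paper states Lemma~\ref{Lem1} without a written proof, relying only on the Jacobian identity $\det\partial(X,Y)/\partial(\phi,E)=1/\omega(E)>0$ for invertibility, with the derivation of \eqref{IPS} being precisely the chain-rule/Cramer computation you carry out, using $\partial_\phi X=\omega^{-1}\partial_yH$, $\partial_\phi Y=-\omega^{-1}\partial_xH$ and $\partial_xH\,\partial_EX+\partial_yH\,\partial_EY=1$. Your reconstruction reproduces the stated $F$ and $G$ exactly, so there is nothing to add.
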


\subsection{Amplitude and phase difference}
It follows from \eqref{XYw} that $\omega'(E)> 0$ for all $E\geq E_1$ with some $E_1\geq E_0$. Hence, there exists $t_1>0$ such that the equation
\begin{gather}\label{omeq}
  \omega\big(I_\ast(t)\big)\equiv\varkappa^{-1} S'(t)
\end{gather}
has a smooth solution $I_\ast(t)\geq E_1$ defined for all $t\geq t_1$. Define
\begin{gather}\label{lam}
  z(t) \equiv c_\varkappa  t^{-\frac{b}{(h-1)q}} \big( I_\ast(t)\big)^{\frac{1}{2h}}, \quad
  c_\varkappa=\left( \frac{\omega_0  \varkappa}{\vartheta}\right)^{\frac{1}{h-1}}, \quad  \vartheta= s \Big(1+\frac{b}{q}\Big)>0.
\end{gather}
Then, it can be easily seen that the function $z(t)$ has power-law asymptotics:
\begin{gather}\label{zt}
    z(t)=\sum_{k=0}^\infty z_k t^{-\frac{ k b}{(h-1)q}}, \quad t\to\infty,
\end{gather}
with constant coefficients $z_k$. In particular, substituting \eqref{zt} into \eqref{omeq}, we obtain
\begin{gather*}
  z_0=1,\quad  z_1=0, \quad z_2=- \frac{\omega_2 c_\varkappa^{2} }{ (h-1)\omega_0}, \quad z_3=- \frac{\omega_3 c_\varkappa^{3} }{ (h-1)\omega_0}.
\end{gather*}

The solution  $I_\ast(t)$ of equation \eqref{omeq} is used in the following change of variables in system \eqref{IPS}:
\begin{gather}
 \label{exch2} I(t)= I_\ast(t)\big(1+t^{-\mu} r(\tau)\big)^{2h}, \quad \phi(t)=\theta(\tau)+\varkappa^{-1} S(t), \quad \tau=\frac{t^{\nu}}{\nu}
\end{gather}
with some constants  $\mu>0$, $\nu>0$, $\varkappa\in\mathbb Z_+$. Note that for all $r_0>0$ there exists $t_2=\max\{t_1,(2r_0)^{1/\mu}\}$ such that $t^{-\mu}|r|<1/2$ for all $|r|\leq r_0$ and $t\geq t_2$. In this case, the mapping $(I,\phi,t)\mapsto (r,\theta,\tau)$ is invertible for all  $|r|\leq r_0$, $\theta\in\mathbb R$ and $\tau\geq \tau_2=t_2^\nu/\nu$.

We take
\begin{gather}\label{munu}
   \mu=\frac{1}{2}\left(\frac{b}{q}-\sigma \right), \quad \nu=1+2\mu+\sigma,\quad
    M=2\mu q  (h-1), \quad    N=2\nu q(h-1).
\end{gather}
It follows from \eqref{rc} that
\begin{gather*}
  \mu>0, \quad \nu-2\mu\geq 0, \quad
  M,N\in\mathbb Z, \quad 1\leq M\leq (h-1)(b+q), \quad 0\leq N-2M<2(h-1)(b+q).
\end{gather*}
Hence, substituting \eqref{exch2} into \eqref{IPS} yields the following asymptotically autonomous system:
\begin{gather}\label{RPsi}
    \frac{dr}{d\tau}=\tau^{-\frac{M}{N}}\mathcal A(r,\theta,\tau), \quad
    \frac{d\theta}{d\tau}= \tau^{-\frac{M}{N}}\mathcal B(r,\theta,\tau),
\end{gather}
with the right-hand sides
\begin{gather*}
    \mathcal A(r,\theta,\tau)\equiv \mathcal F(r,\theta,\tau)+ \tau^{-\frac{N-2M}{N}}\mathcal P(r,\tau)+\tau^{-\frac{N-M}{N}}\frac{M}{N} r, \quad
    \mathcal B(r,\theta,\tau)\equiv\mathcal Q(r,\tau)r+ \tau^{-\frac{M}{N}}\mathcal G(r,\theta,\tau),
\end{gather*}
where
\begin{eqnarray*}
    \mathcal F(r,\theta,\tau)&\equiv & \tilde F \Big(I_\ast\big((\nu \tau)^{\frac 1 \nu}\big) \big(1+(\nu \tau)^{-\frac{\mu}{\nu}} r \big)^{2h},\theta+\varkappa^{-1}\zeta(\tau),\zeta(\tau),(\nu \tau)^{\frac 1 \nu}\Big) \frac{c_\varkappa\nu^{-\frac{\mu}{\nu}}}{  2 h z\big((\nu \tau)^{\frac 1 \nu}\big)} \\
    & & \times\Big(c_\varkappa^{-1}z\big((\nu \tau)^{\frac 1 \nu}\big)\big(1+(\nu \tau)^{-\frac{\mu}{\nu}} r \big)\Big)^{p+(l-1)(h-1)}, \\
    \mathcal P(r,\tau) &\equiv& -  \big(1+(\nu \tau)^{-\frac{\mu}{\nu}} r \big)\nu^{-\frac{\nu-\mu}{\nu}} \left(\frac{tI_\ast'(t)}{2h I_\ast(t)}\right)\Big|_{t=(\nu \tau)^{1/\nu}},\\
    \mathcal Q(r,\tau) &\equiv& r^{-1}\left[\omega\Big(I_\ast\big((\nu \tau)^{\frac 1 \nu}\big)\big(1+(\nu \tau)^{-\frac{\mu}{\nu}} r \big)^{2h}\Big)-\omega\Big(I_\ast\big((\nu \tau)^{\frac 1 \nu}\big)\Big)\right] \tau ^{\frac{\mu-\nu+1}{\nu}}\nu^{\frac{1-\nu}{\nu}},\\
    \mathcal G(r,\theta,\tau) &\equiv& \tilde G \Big(I_\ast\big((\nu \tau)^{\frac 1 \nu}\big) \big(1+(\nu \tau)^{-\frac{\mu}{\nu}} r \big)^{2h},\theta+\varkappa^{-1}\zeta(\tau),\zeta(\tau),(\nu \tau)^{\frac 1 \nu}\Big)\nu^{-\frac{2\mu}{\nu}}\\
    &&\times \Big(c_\varkappa^{-1}z\big((\nu \tau)^{\frac 1 \nu}\big)\big(1+(\nu \tau)^{-\frac{\mu}{\nu}} r \big)\Big)^{p-1+(l-1)(h-1)},\\
    \zeta(\tau)&=&S\big((\nu \tau)^{\frac 1 \nu}\big).
\end{eqnarray*}
Note that the condition \eqref{rc} guarantees that the transformed system \eqref{RPsi} is asymptotically autonomous and not trivial in the leading asymptotic terms.

Combining \eqref{FG}, \eqref{FGk} and  \eqref{lam}, we obtain the asymptotic estimates as $\tau\to\infty$:
\begin{gather*}
    \mathcal F   = \sum_{K=0}^\infty \mathcal F_K(r,\theta,\zeta)\tau^{- \frac{K}{N} },\ \
    \mathcal P   = \sum_{K=0}^\infty \mathcal P_K(r)\tau^{- \frac{K}{N} },\ \
    \mathcal Q  = \sum_{K=0}^\infty \mathcal Q_K(r)\tau^{- \frac{K}{N} },\ \
    \mathcal G  = \sum_{K=0}^\infty \mathcal G_K(r,\theta,\zeta)\tau^{- \frac{K}{N} }
\end{gather*}
for all $|r|\leq r_0$, $\theta\in\mathbb R$, where
\begin{align*}
   \mathcal F_K(r,\theta,\zeta) &\equiv\nu^{-\frac{K+M}{N}} \sum\limits_{(k,d,i,\ell)\in \mathcal Y_K} \hat F_{k,d,i}^{\ell}(\theta,\zeta)r^{\ell},&\quad
   \mathcal P_K(r) &\equiv \nu^{-\frac{K+N-M}{N}}\sum\limits_{(k,d,i,\ell)\in \mathcal Y_K, \ell\in\{0,1\}} \hat P_{k,d,i}^{\ell} r^{\ell}, \\
   \mathcal Q_K(r) &\equiv  \nu^{-\frac{K+M}{N}}\sum\limits_{(k,d,i,\ell)\in \mathcal Y_K}  \hat Q_{k,d,i}^{\ell} r^{\ell},&\quad
    \mathcal G_K(r,\theta,\zeta)&\equiv\nu^{-\frac{K+2M}{N}} \sum\limits_{(k,d,i,\ell)\in \mathcal Y_K} \hat G_{k,d,i}^{\ell}(\theta,\zeta)r^{\ell},
\end{align*}
and
\begin{eqnarray*}
    \hat F_{k,d,i}^{\ell}(\theta,\zeta)&\equiv& \frac{1}{2h} \tilde F_{k,d}(\theta+\varkappa^{-1}\zeta,\zeta)C_{p+(l-1)(h-1)-d,\ell}Z_{p-1+(l-1)(h-1)-d,i} c_\varkappa^{- ( p-1+(l-1)(h-1)-d)} ,\\
    \hat G_{k,d,i}^{\ell}(\theta,\zeta)&\equiv&   \tilde G_{k,d}(\theta+\varkappa^{-1}\zeta,\zeta)C_{p-1+(l-1)(h-1)-d,\ell}Z_{p-1+(l-1)(h-1)-d,i} c_\varkappa^{- ( p-1+(l-1)(h-1)-d)},\\
    \hat Q_{k,d,i}^{\ell}&=& \omega_{i} \delta_{k,0} C_{h-1-i,\ell+1} Z_{h-1-i,d}c_\varkappa^{- (h-1-i)},\\
     \hat P_{k,d,i}^{\ell}&=&-\delta_{k,0}\frac{(2h-i)b}{2h(h-1)q}C_{1,\ell} Z_{2h,d} Z_{-2h,i}.
\end{eqnarray*}
Here $\mathcal Y_K=\{(k,d,i,\ell)\in\mathbb Z^4: k\geq 0, \, d\geq 0,\, i\geq 0,\, \ell\geq 0,\,   2(h-1)k+2b(d+i)+M\ell=K\}$, and the parameters $C_{n,i}$, $ Z_{n,i}$ denote the coefficients of the asymptotic expansions
\begin{gather*}
(1+t^{-\mu})^n=\sum_{i=0}^\infty C_{n,i} t^{-\mu i}, \quad
\big(z(t)\big)^n=\sum_{i=0}^\infty Z_{n,i} t^{-\frac{ b i}{(h-1)q}}, \quad t\to\infty.
\end{gather*}
In particular, $C_{0,0}=Z_{0,0}=1$, $C_{0,i}=Z_{0,i}=0$, $C_{n,0}=1$, $C_{n,1}=n$, $C_{n,2}=n(n-1)/2$, $Z_{n,0}=1$,$Z_{n,1}=0$, $Z_{n,2}=n z_2$ for all $n,i\neq 0$.

Thus, the right-hand sides of system \eqref{RPsi} have the following asymptotics:
\begin{gather*}
        \mathcal A(r,\theta,\tau)  = \sum_{K=0}^\infty \tau^{- \frac{K}{N} }\mathcal A_K(r,\theta,\zeta(\tau)),
     \quad
         \mathcal B(r,\theta,\tau)  = \sum_{K=0}^\infty \tau^{- \frac{K}{N} }\mathcal B_K(r,\theta,\zeta(\tau))
\end{gather*}
as $\tau\to\infty$ uniformly for all $|r|\leq r_0$, $\theta\in\mathbb R$, with
\begin{gather}\label{AKBK}
\begin{split}
  &\mathcal A_K(r,\theta,\zeta)\equiv \mathcal F_K(r,\theta,\zeta)+\mathcal P_{K+2M-N}(r)+\delta_{K,N-M}\frac{M}{N}r,\\
   &\mathcal B_K(r,\theta,\zeta)\equiv\mathcal Q_K(r) r+\mathcal G_{K-M}(r,\theta,\zeta).
    \end{split}
\end{gather}
It is assumed that $\mathcal P_{i}\equiv \mathcal G_{i}\equiv 0$ if $i<0$.  It follows easily that $\mathcal A_K(r,\theta,\zeta)$ and  $\mathcal B_K(r,\theta,\zeta)$ are $2\pi$-periodic with respect to $\theta$ and $2\pi \varkappa$-periodic with respect to $\zeta$. In particular,
\begin{align*}
    \begin{pmatrix}  \mathcal A_K\\ \mathcal B_K\end{pmatrix}
        & \equiv
    \begin{pmatrix}\mathcal A_K^0(\theta,\zeta) \\  \mathcal Q_K^0 r \end{pmatrix}, &\quad &K\in[0,M),\\
     \begin{pmatrix}  \mathcal A_K\\ \mathcal B_K\end{pmatrix}
        & \equiv
     \begin{pmatrix}\mathcal A_K^1(\theta,\zeta)r+\mathcal A_K^0(\theta,\zeta) \\ \mathcal Q_K^1 r^2+\mathcal Q_K^0 r +\mathcal G_{K-M}^0(\theta,\zeta)  \end{pmatrix}, &\quad&   K\in[M,2M),\\
    \begin{pmatrix}  \mathcal A_K\\ \mathcal B_K\end{pmatrix} & \equiv
    \begin{pmatrix} \mathcal A_K^2(\theta,\zeta)r^2+\mathcal A_K^1(\theta,\zeta)r+ \mathcal A_K^0(\theta,\zeta) \\ \mathcal Q_K^2 r^3 + \mathcal Q_K^1 r^2+\big(\mathcal Q_K^0  + \mathcal G_{K-M}^1(\theta,\zeta)\big)r   + \mathcal G_{K-M}^0(\theta,\zeta)  \end{pmatrix}, &\quad& K\in[2M,3M),
\end{align*}
where
\begin{equation*}
\begin{aligned}
  \mathcal A_K^\ell(\theta,\zeta)\equiv&\nu^{-\frac{K+M}{N}} \sum\limits_{(k,d,i,\ell)\in \mathcal Y_K} \hat F_{k,d,i}^{\ell}(\theta,\zeta)+\nu^{-\frac{K+2M-N}{N}} \sum\limits_{(k,d,i,\ell)\in \mathcal Y_{K+N-M}} \hat P_{k,d,i}^{\ell}+\delta_{\ell,1}\delta_{K,N-M}\frac{M}{N},\\
  \mathcal G_{K}^\ell(\theta,\zeta)\equiv&\nu^{-\frac{K+2M}{N}} \sum\limits_{(k,d,i,\ell)\in \mathcal Y_{K}} \hat G_{k,d,i}^{\ell}(\theta,\zeta),\qquad
  \mathcal Q_K^\ell \equiv \nu^{-\frac{K+M}{N}} \sum\limits_{(k,d,i,\ell)\in \mathcal Y_K} \hat Q_{k,d,i}^{\ell}.
\end{aligned}
\end{equation*}

\begin{Lem}\label{Lem2}
Let assumption \eqref{rc} hold. Then for all $E\geq E_1$, $\phi\in\mathbb R$ and $t\geq t_1$ system \eqref{IPS} can be transformed into \eqref{RPsi} by the transformation \eqref{exch2}.
\end{Lem}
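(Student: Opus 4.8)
The plan is to treat \eqref{exch2} as an explicit change of the dependent variables $(I,\phi)\mapsto(r,\theta)$ together with the time substitution $t\mapsto\tau$, to differentiate $r$ and $\theta$ along solutions of \eqref{IPS}, and to check term by term that the result coincides with \eqref{RPsi}. Invertibility of the map for $|r|\le r_0$ and large $t$ has already been recorded, so that it is a genuine change of variables; the substance here is the computation of the right-hand sides. Throughout I would use $\tau=t^\nu/\nu$, whence $dt/d\tau=t^{1-\nu}=(\nu\tau)^{(1-\nu)/\nu}$, together with the identities $M/N=\mu/\nu$ and $\nu=1+b/q$ that follow from \eqref{munu}.

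First I would treat the phase. Since $\theta=\phi-\varkappa^{-1}S(t)$, the resonance condition \eqref{omeq} cancels the fast frequency against $\omega(I_\ast)$:
\[
\frac{d\theta}{d\tau}=\Big(\frac{d\phi}{dt}-\varkappa^{-1}S'(t)\Big)t^{1-\nu}
=\big(\omega(I)-\omega\big(I_\ast(t)\big)+t^{-a/q}G(I,\phi,t)\big)t^{1-\nu}.
\]
Writing $I=I_\ast(t)\big(1+t^{-\mu}r\big)^{2h}$, the divided difference $[\omega(I)-\omega(I_\ast)]$ is precisely $\mathcal Q(r,\tau)r$ up to the recorded power of $\tau$, while the $G$-term, after inserting $G=I^{(p-1+(l-1)(h-1))/2h}\tilde G$ and the relation \eqref{lam} between $I_\ast$ and $z$, becomes $\tau^{-M/N}\mathcal G(r,\theta,\tau)$. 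Collecting gives $d\theta/d\tau=\tau^{-M/N}\big(\mathcal Q r+\tau^{-M/N}\mathcal G\big)=\tau^{-M/N}\mathcal B$.

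For the amplitude I would set $w=(I/I_\ast)^{1/(2h)}=1+t^{-\mu}r$, so $r=t^\mu(w-1)$ and
\[
\frac{dr}{d\tau}=\big(\mu t^{\mu-1}(w-1)+t^\mu\dot w\big)t^{1-\nu},
\qquad
\dot w=\frac{w}{2h}\Big(\frac{I'}{I}-\frac{I_\ast'}{I_\ast}\Big).
\]
The first summand equals $\mu t^{-\nu}r=\tau^{-1}(M/N)r$ because $\nu M/N=\mu$; substituting $I'=t^{-a/q}F$ from \eqref{IPS} splits $\dot w$ into an $I_\ast'$-part, which reproduces $\tau^{-(N-M)/N}\mathcal P$ since $(M-N)/N=(\mu-\nu)/\nu$, and an $F$-part. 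For the latter, inserting $F=I^{1+(p-1+(l-1)(h-1))/2h}\tilde F$ and \eqref{lam} one finds that the net power of $t$ reduces to $t^{-\mu}=\nu^{M/N}\tau^{-M/N}$ — this is exactly the reason for the choice $\mu=\tfrac12(b/q-\sigma)$ — so the $F$-part becomes $\tau^{-M/N}\mathcal F$. Summing the three contributions yields $dr/d\tau=\tau^{-M/N}\mathcal A$.

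It would then remain to justify the series forms of $\mathcal A,\mathcal B$. Substituting \eqref{zt}, \eqref{XYw} and \eqref{FG}--\eqref{FGk}, every prefactor is built from powers $t^{-k/q}$, $t^{-bd/((h-1)q)}$, $t^{-bi/((h-1)q)}$ and $t^{-\mu\ell}$; each is a multiple of $t^{-1/(2q(h-1))}$, so after the rescaling $t=(\nu\tau)^{1/\nu}$ and since $N=2\nu q(h-1)$ they collapse into integer powers of $\tau^{-1/N}$, with total index $2(h-1)k+2b(d+i)+M\ell=K$, i.e. the set $\mathcal Y_K$. The main obstacle will be exactly this exponent bookkeeping together with tracking the indices in $\mathcal X_d,\mathcal Y_K$; here condition \eqref{rc} supplies the inequalities recorded after \eqref{munu} — in particular $\mu>0$, $\nu-2\mu\ge0$ and $M\ge1$ — which ensure that \eqref{RPsi} is well defined, genuinely decaying in $\tau$, and nontrivial at leading order. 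The differential manipulations are otherwise routine.
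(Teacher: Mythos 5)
Your proposal is correct and follows essentially the same route as the paper, which establishes Lemma~\ref{Lem2} not by a separate argument but by the explicit computation preceding it: direct substitution of \eqref{exch2} into \eqref{IPS}, cancellation of the fast frequency via the resonance condition \eqref{omeq}, identification of the three contributions to $dr/d\tau$ with $\mathcal F$, $\mathcal P$ and the $\frac{M}{N}r$ term and of $d\theta/d\tau$ with $\mathcal Q r+\tau^{-M/N}\mathcal G$, and the exponent bookkeeping with $N=2\nu q(h-1)$ that turns all prefactors into integer powers of $\tau^{-1/N}$ indexed by $\mathcal Y_K$. Your identities $M/N=\mu/\nu$, $1-\nu+\sigma=-2\mu$ and the resulting net powers $t^{-\mu}$ and $t^{-2\mu}$ for the $F$- and $G$-parts reproduce exactly the paper's normalizations of $\mathcal F$ and $\mathcal G$.
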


\subsection{Averaging}
Note that $d\zeta/d\tau=\vartheta$. Hence, $\zeta(\tau)$ changes rapidly in comparison to potential variations of $r(\tau)$ and $\theta(\tau)$ for large values of $\tau$. Further simplification of the system is associated with the averaging of the equations over $\zeta$. This technique is usually used in perturbation theory (see, for example,~\cite{AKN06,AN84,BDP01,DM10}).

Consider the following near-identity transformation:
\begin{gather}\label{RnPsim}
  R_n(r,\theta,\tau)=r+\sum_{K=0}^{n} \tau^{-\frac{M+K}{N}} \rho_K(r,\theta,\zeta(\tau)), \quad   \Psi_n(r,\theta,\tau)=\theta+\sum_{K=0}^{n} \tau^{-\frac{M+K}{N}} \psi_k(r,\theta,\zeta(\tau))
\end{gather}
with some integer $n\geq 0$. The coefficients $\rho_K(r,\theta,\zeta)$, $\psi_K(r,\theta,\zeta)$ are sought in such a way that the right-hand sides of the transformed equations in the variables $R(\tau)\equiv R_n(r(\tau),\theta(\tau),\tau)$ and $\Psi(\tau)\equiv \Psi_n(r(\tau),\theta(\tau),\tau)$ do not depend explicitly on $\zeta$, at least in the first terms of the asymptotics:
\begin{gather}\label{RPsiAv}
\begin{split}
    \frac{dR}{d\tau}=\sum_{K=0}^{n} \tau^{-\frac{M+K}{N}}\Lambda_K(R,\Psi)+\widetilde{\Lambda}_n(R,\Psi,\tau),\quad  \frac{d\Psi}{d\tau}=\sum_{K=0}^{n} \tau^{-\frac{M+K}{N}}\Omega_K(R,\Psi)+\widetilde{\Omega}_n(R,\Psi,\tau),
\end{split}
\end{gather}
and the remainders $\widetilde{\Lambda}_n(R,\Psi,\tau)$, $\widetilde{\Omega}_n(R,\Psi,\tau)$ satisfy the estimates $ \widetilde{\Lambda}_n(R,\Psi,\tau)=\mathcal O(\tau^{-(M+n+1)/N})$, $\widetilde{\Omega}_n(R,\Psi,\tau)=\mathcal O(\tau^{-(M+n+1)/N})$ as $\tau\to\infty$. Calculating the total derivative of $R_n(r,\theta,\tau)$ and $\Psi_n(r,\theta,\tau)$ with respect to $\tau$ along the trajectories of system \eqref{RPsi} yields
\begin{eqnarray}
\nonumber    \frac{d}{d\tau}\begin{pmatrix} R_n \\ \Psi_n\end{pmatrix}\bigg |_\eqref{RPsi}
        &=&
            \Big(\tau^{-\frac{M}{N}}\big(\mathcal A\partial_r+\mathcal B\partial_\theta\big)+\partial_\tau\Big)\begin{pmatrix} R_n \\ \Psi_n\end{pmatrix}\\
 \label{TD}       &=& \sum_{K=0}^\infty \tau^{-\frac{M+K}{N}} \left[
     \vartheta \partial_\zeta\begin{pmatrix} \rho_K\\ \psi_K\end{pmatrix}+\begin{pmatrix} \mathcal A_K \\ \mathcal B_K\end{pmatrix}+\frac{N-K}{N}\begin{pmatrix} \rho_{K-M-N}\\ \psi_{K-M-N}\end{pmatrix}\right]\\
\nonumber       &&+\sum_{K=0}^\infty\tau^{-\frac{2M+K}{N}}\sum_{i+j=K}
        \big(\mathcal A_i \partial_r + \mathcal B_i \partial_\theta\big)\begin{pmatrix} \rho_j\\ \psi_j\end{pmatrix},
\end{eqnarray}
where it is assumed that $\rho_i\equiv \psi_i\equiv \mathcal A_j\equiv \mathcal B_j\equiv 0$ if $i,j< 0$ or $i>n$.
Matching \eqref{TD} with \eqref{RPsiAv} gives the following chain of differential equations for determining $\rho_K$ and $\psi_K$:
\begin{gather}
\label{rpsys}
  \vartheta\partial_\zeta
    \begin{pmatrix} \rho_K\\ \psi_K\end{pmatrix}
        =
    \begin{pmatrix}
        \Lambda_K(r,\theta)-\mathcal A_K(r,\theta,\zeta)-\mathfrak A_K(r,\theta,\zeta) \\
        \Omega_K(r,\theta)-\mathcal B_K(r,\theta,\zeta)-\mathfrak B_K(r,\theta,\zeta)
    \end{pmatrix},
  \quad K\geq 0,
\end{gather}
where the functions $\mathfrak A_K$, $\mathfrak B_K$ are expressed through $\{\rho_i,\psi_i,\Lambda_i,\Omega_i\}_{j=0}^{K-M}$. In particular,
\begin{align*}
    \begin{pmatrix} \mathfrak A_K\\ \mathfrak B_K\end{pmatrix}  \equiv &
    \begin{pmatrix} 0\\ 0\end{pmatrix}, &\quad & K\in[0,M),\\
    \begin{pmatrix} \mathfrak A_K\\ \mathfrak B_K\end{pmatrix}  \equiv &
    \sum_{i+j=K-M}\left[\big(\mathcal A_i \partial_r + \mathcal B_i \partial_\theta\big)\begin{pmatrix} \rho_j\\ \psi_j\end{pmatrix}-(\rho_i\partial_r+\psi_i\partial_\theta)\begin{pmatrix} \Lambda_j\\ \Omega_j\end{pmatrix} \right], &\quad & K\in[M,2M),\\
    \begin{pmatrix} \mathfrak A_{K}\\ \mathfrak B_{K}\end{pmatrix}  \equiv &
    \sum_{i+j=K-M}\left[\big(\mathcal A_i \partial_r + \mathcal B_i \partial_\theta\big)\begin{pmatrix} \rho_j\\ \psi_j\end{pmatrix}-(\rho_i\partial_r+\psi_i\partial_\theta)\begin{pmatrix} \Lambda_j\\ \Omega_j\end{pmatrix} \right] & & \\
     & - \frac{1}{2} \sum_{i+j+k=K-2M}\Big( \rho_i\rho_j  \partial_r^2 + (\rho_i \psi_j +\rho_j \psi_i) \partial_r \partial_\theta + \psi_i\psi_j \partial_\theta^2 \Big) \begin{pmatrix} \Lambda_k \\ \Omega_k\end{pmatrix},&\quad  & K\in[2M,3M),
\end{align*}
etc. Define
\begin{gather}
\label{LOm}
    \Lambda_K(r,\theta)\equiv \langle \mathcal A_K(r,\theta,\zeta)+\mathfrak A_K(r,\theta,\zeta)\rangle_{\varkappa\zeta},\quad
    \Omega_K(r,\theta)\equiv \langle \mathcal B_K(r,\theta,\zeta)+\mathfrak B_K(r,\theta,\zeta)\rangle_{\varkappa\zeta},
\end{gather}
where
\begin{gather*}
  \langle \mathfrak C (r,\theta,\zeta) \rangle_{\varkappa\zeta} \equiv  \frac{1}{2\pi}\int\limits_0^{2\pi}\mathfrak C(r,\theta,\varkappa\varsigma)\, d\varsigma \equiv \frac{1}{2\pi\varkappa}\int\limits_0^{2\pi\varkappa}\mathfrak C(r,\theta,\varsigma)\, d\varsigma.
\end{gather*}
Then, for all $K\geq 0$ the right-hand sides of system \eqref{rpsys} are $2\pi\varkappa$-periodic with respect to $\zeta$ with zero average. Integrating \eqref{rpsys} yileds
\begin{gather}\label{rhopsi}
  \begin{pmatrix} \rho_K(r,\theta,\zeta)\\ \psi_K(r,\theta,\zeta)\end{pmatrix}= -\frac{1}{\vartheta}\int\limits_0^\zeta   \begin{pmatrix}
        \{\mathcal A_K(r,\theta,\zeta)+\mathfrak A_K(r,\theta,\zeta)\}_{\varkappa\zeta} \\
        \{\mathcal B_K(r,\theta,\zeta)+\mathfrak B_K(r,\theta,\zeta)\}_{\varkappa\zeta}
    \end{pmatrix}\,d\zeta+
    \begin{pmatrix} \hat{\rho}_K(r,\theta)\\ \hat{\psi}_K(r,\theta)\end{pmatrix},
\end{gather}
where $\{\mathfrak C\}_{\varkappa \zeta}:=\mathfrak C-\langle \mathfrak C\rangle_{\varkappa \zeta}$, and the functions $\hat{\rho}_K(r,\theta)$, $\hat{\psi}_K(r,\theta)$ are chosen such that $\langle \mathfrak \rho_K\rangle_{\varkappa \zeta}\equiv \langle \mathfrak \psi_K\rangle_{\varkappa \zeta}\equiv 0$. Thus, the functions $\rho_K(r,\theta,\zeta)$, $\psi_K(r,\theta,\zeta)$ are smooth and periodic with respect to $\theta$ and $\zeta$.

From \eqref{AKBK}, \eqref{rpsys} and \eqref{rhopsi} it follows that
\begin{align*}
     \begin{pmatrix} \Lambda_K\\ \Omega_K\end{pmatrix}
   & \equiv
        \begin{pmatrix} \Lambda_K^0(\theta)\\ \mathcal Q_K^0 r\end{pmatrix},
        \quad
        \begin{pmatrix} \rho_K\\  \psi_K\end{pmatrix}
        \equiv
        \begin{pmatrix} \rho_K^0(\theta,\zeta)\\  0\end{pmatrix},&
        \quad
        K&\in[0,M),
    \\
     \begin{pmatrix} \Lambda_K \\ \Omega_K \end{pmatrix}
     &\equiv
        \begin{pmatrix}  \Lambda_K^1(\theta)r+\Lambda_K^0(\theta)\\ \mathcal Q_K^1 r^2+\mathcal Q_K^0 r+\Omega_K^0(\theta)\end{pmatrix},
        \quad
        \begin{pmatrix} \rho_K \\  \psi_K \end{pmatrix}
        \equiv
        \begin{pmatrix}\rho_K^1(\theta,\zeta)r+ \rho_K^0(\theta,\zeta)\\  \psi_K^0(\theta,\zeta)\end{pmatrix},&
        \quad
        K&\in[M,2M),
    \\
    \begin{pmatrix} \Lambda_K \\ \Omega_K \end{pmatrix}
    &\equiv
        \begin{pmatrix}  \Lambda_K^2(\theta)r^2+\Lambda_K^1(\theta)r+\Lambda_K^0(\theta)\\ \mathcal Q_K^2 r^3+\mathcal Q_K^1 r^2+(\mathcal Q_K^0+\Omega_K^1(\theta)) r+\Omega_K^0(\theta)\end{pmatrix},&
        \quad
        K&\in[2M,3M),
\end{align*}
where $\Lambda_K^{0,1}\equiv \langle \mathcal A_K^{0,1}(\theta,\zeta)\rangle_{\varkappa\zeta}$, $\Omega_K^{0,1}\equiv \langle \mathcal G_{K-M}^{0,1}(\theta,\zeta)\rangle_{\varkappa\zeta}$,
\begin{equation*}
\begin{aligned}
  \Lambda_K^2\equiv& \langle  \mathcal A_K^{2}(\theta,\zeta) \rangle_{\varkappa\zeta}+\sum_{i=0}^{K-2M}\big\langle\mathcal A_i^{0}(\theta,\zeta)\rho_{K-M-i}^1(\theta,\zeta) \big\rangle_{\varkappa\zeta}+
  \sum_{i=0}^{K-2M-1}\big\langle \mathcal G_{K-2M-i}^0(\theta,\zeta)\partial_\theta\rho_i^0(\theta,\zeta)   \big\rangle_{\varkappa\zeta},\\
   \rho_K^0\equiv&-\frac{1}{\vartheta}\left\{\int\limits_0^\zeta\{\mathcal A_K^0(\theta,\zeta)\}_{\varkappa\zeta}\,d\zeta \right\}_{\varkappa\zeta}, \quad  \psi_K^0\equiv -\frac{1}{\vartheta}\left\{\int\limits_0^\zeta\Big\{\mathcal \mathcal \mathcal G_{K-M}^0(\theta,\zeta)-\sum_{i+j=K-M} \mathcal Q_{i}^0 \rho_j^0\Big\}_{\varkappa\zeta}\,d\zeta \right\}_{\varkappa\zeta},\\ \rho_K^1\equiv&-\frac{1}{\vartheta}\left\{\int\limits_0^\zeta\Big\{\mathcal A_K^1(\theta,\zeta) +\sum_{i+j=K-M} \mathcal Q_{i}^0 \partial_\theta \rho_j^0(\theta,\zeta) \Big\}_{\varkappa\zeta}\,d\zeta \right\}_{\varkappa\zeta}.
\end{aligned}
\end{equation*}
In this case, it can easily be checked that
\begin{eqnarray*}
    \widetilde{\Lambda}_n(R,\Psi,\tau) &\equiv& \frac{d}{d\tau}R_n(r,\theta,\tau)\Big|_\eqref{rpsys} - \sum_{K=0}^n \tau^{-\frac{M+K}{N}}\Lambda_K(R,\Psi)=\mathcal O\Big(\tau^{-\frac{M+n+1}{N}}\Big),
    \\
    \widetilde{\Omega}_n(R,\Psi,\tau) &\equiv& \frac{d}{d\tau}\Psi_n(r,\theta,\tau)\Big|_\eqref{rpsys} - \sum_{K=0}^n \tau^{-\frac{M+K}{N}}\Omega_K(R,\Psi)=\mathcal O\Big(\tau^{-\frac{M+n+1}{N}}\Big)
\end{eqnarray*}
as $\tau\to\infty$ uniformly for all $|R|\leq d_0$ and $\Psi\in\mathbb R$ with some $d_0={\hbox{\rm const}}>0$.

It follows from \eqref{RnPsim} that for all $r_0>0$ and $\varepsilon\in(0,r_0)$ there exists $\tau_0\geq \tau_2$ such that
\begin{align*}
    &|R_n(r,\theta,\tau)-r|\leq \varepsilon,& \quad& |\partial_r R_n(r,\theta,\tau)-1|\leq \varepsilon, &\quad &|\partial_\theta R_n(r,\theta,\tau)|\leq \varepsilon,  \\
    &|\Psi_n(r,\theta,\tau)-\theta|\leq \varepsilon, &\quad &|\partial_r \Psi_n(r,\theta,\tau)|\leq \varepsilon,& \quad &|\partial_\theta \Psi_n(r,\theta,\tau)-1|\leq \varepsilon
\end{align*}
for all $|r|\leq r_0$, $\theta\in\mathbb R$ and $\tau\geq \tau_0$. Hence, the mapping $(r,\theta)\mapsto (R,\Psi)$ is invertible for all  $|R|\leq d_0$, $\Psi\in\mathbb R$ and $\tau\geq \tau_0$ with $d_0=r_0-\varepsilon>0$.

\begin{Lem}\label{Lem3}
For all $r_0>0$ there exists $\tau_0>0$ such that for all $|r|\leq r_0$, $\theta\in\mathbb R$ and $\tau\geq \tau_0$ system \eqref{RPsi} can be transformed into \eqref{RPsiAv} by the transformation \eqref{RnPsim}.
\end{Lem}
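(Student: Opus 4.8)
The plan is to verify that the coefficients $\rho_K,\psi_K$ constructed in \eqref{rhopsi} make the near-identity map \eqref{RnPsim} carry system \eqref{RPsi} into the averaged form \eqref{RPsiAv}, and then to deduce that this map is a genuine change of variables on the stated domain. The construction is recursive in $K$, so I would organise the argument around the solvability of the homological chain \eqref{rpsys} followed by the invertibility estimates already displayed before the statement.

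First I would differentiate \eqref{RnPsim} along the trajectories of \eqref{RPsi}. Using $d\zeta/d\tau=\vartheta$, the chain rule, and the asymptotic expansions of $\mathcal A$ and $\mathcal B$ in powers of $\tau^{-1/N}$, the total derivative takes the form of the double series \eqref{TD}. Collecting the coefficient of each power $\tau^{-(M+K)/N}$ and equating it with the corresponding coefficient $\Lambda_K,\Omega_K$ of the target system \eqref{RPsiAv} produces the chain \eqref{rpsys}. The crucial structural observation is that the inhomogeneities $\mathfrak A_K,\mathfrak B_K$ involve only the previously determined data $\{\rho_i,\psi_i,\Lambda_i,\Omega_i\}$ with index at most $K-M$, so \eqref{rpsys} can be solved one value of $K$ at a time.

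The heart of the matter is the solvability of each homological equation. Since the operator on the left is $\vartheta\partial_\zeta$ acting on functions that are $2\pi\varkappa$-periodic in $\zeta$, a periodic solution $\rho_K,\psi_K$ exists precisely when the right-hand side has zero $\zeta$-average over a period. I would verify that the definitions \eqref{LOm} of $\Lambda_K,\Omega_K$ as the averages $\langle\cdot\rangle_{\varkappa\zeta}$ are exactly what annihilate that mean, so that \eqref{rpsys} is consistent; integrating in $\zeta$ then yields \eqref{rhopsi}, with the free additive functions $\hat\rho_K,\hat\psi_K$ fixed by imposing $\langle\rho_K\rangle_{\varkappa\zeta}=\langle\psi_K\rangle_{\varkappa\zeta}=0$. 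Smoothness and periodicity of the resulting coefficients in $\theta$ and $\zeta$ follow from those of $\mathcal A_K,\mathcal B_K$. Having determined $\rho_K,\psi_K$ for $0\le K\le n$, the terms left over in \eqref{TD}—those of index exceeding $n$ together with the quadratic correction block—are bounded uniformly on $|r|\le r_0$ by $\mathcal O(\tau^{-(M+n+1)/N})$, which is the remainder estimate on $\widetilde\Lambda_n,\widetilde\Omega_n$ recorded before the lemma.

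Finally, to upgrade this formal identity into the change of variables asserted by the lemma, I would invoke the invertibility estimates displayed just above the statement. Because each $\rho_K,\psi_K$ and its first derivatives are bounded on the compact $r$-range, the bounds on $R_n-r$, $\Psi_n-\theta$ and their derivatives hold once $\tau\ge\tau_0$ for $\tau_0$ large enough; the Jacobian $\partial(R_n,\Psi_n)/\partial(r,\theta)$ is then within $\mathcal O(\varepsilon)$ of the identity, so by the inverse function theorem together with the $2\pi$-periodicity in $\theta$ the map $(r,\theta)\mapsto(R,\Psi)$ is a diffeomorphism of $\{|r|\le r_0\}$ onto a set containing $\{|R|\le d_0\}$. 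I expect the main obstacle to be the bookkeeping in the recursive step: one must track precisely which powers of $\tau^{-1/N}$ the nonlinear and correction terms of \eqref{TD} feed into, so as to be sure that every contribution up to order $n$ is cancelled and that the leftover is genuinely $\mathcal O(\tau^{-(M+n+1)/N})$ uniformly in $r$. By contrast, the solvability itself is painless, since there is a single fast phase $\zeta$ and hence no small-divisor difficulties—the only obstruction is the $\zeta$-mean, which is removed by the choice \eqref{LOm}.
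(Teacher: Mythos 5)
Your proposal is correct and follows essentially the same route as the paper: computing the total derivative of \eqref{RnPsim} along \eqref{RPsi} to obtain \eqref{TD}, matching powers of $\tau^{-1/N}$ to derive the homological chain \eqref{rpsys}, choosing $\Lambda_K,\Omega_K$ as the $\zeta$-averages \eqref{LOm} so that each equation admits a periodic solution \eqref{rhopsi}, bounding the leftover terms by $\mathcal O(\tau^{-(M+n+1)/N})$, and invoking the near-identity estimates to invert the map for $\tau\geq\tau_0$. No gaps.
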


Thus, combining Lemmas~\ref{Lem1},~\ref{Lem2} and \ref{Lem3}, we obtain the following.
\begin{Th}\label{Th1}
Let assumptions \eqref{fg}, \eqref{rc} hold.  Then there exists $t_0>0$ such that for all $(x,y)\in\mathcal D(E_1)$ and $t>t_0$ system \eqref{FulSys} can be transformed into \eqref{RPsiAv} by the transformations \eqref{exch1}, \eqref{exch2} and \eqref{RnPsim}.
\end{Th}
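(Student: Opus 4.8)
The plan is to derive Theorem~\ref{Th1} by composing the three substitutions \eqref{exch1}, \eqref{exch2} and \eqref{RnPsim}, each of which is supplied by one of Lemmas~\ref{Lem1}--\ref{Lem3}, and then to fix a single threshold $t_0$ large enough that all three are simultaneously valid on compatible domains. Because each individual transformation has already been shown to be invertible on its stated domain, the only substantive work is the threshold bookkeeping and the verification that the image of each map lands in the admissible domain of the next.

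First I would fix the order $n\geq 0$ and a radius $r_0>0$ once and for all; Lemma~\ref{Lem3} then furnishes the corresponding $\tau_0\geq\tau_2$ together with the reduced radius $d_0=r_0-\varepsilon$. I would set $t_0:=\max\{t_1,\,t_2,\,(\nu\tau_0)^{1/\nu}\}$, where $t_2=\max\{t_1,(2r_0)^{1/\mu}\}$ is the threshold that makes \eqref{exch2} invertible for $|r|\leq r_0$. With this choice $t>t_0$ forces $\tau=t^{\nu}/\nu>\tau_0$, so the hypotheses of all three lemmas hold at once.

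The domain matching then proceeds step by step. For $(x,y)\in\mathcal D(E_1)$ we have $I=H(x,y)>E_1\geq E_0$, so Lemma~\ref{Lem1} carries \eqref{FulSys} into the energy-angle system \eqref{IPS}. On the band of energies lying within the factor $(1\pm t^{-\mu}r_0)^{2h}$ of the resonant value $I_\ast(t)$---equivalently $|r|\leq r_0$ under \eqref{exch2}---the second substitution is invertible for $t>t_0\geq t_2$, and since $I_\ast(t)\geq E_1$, Lemma~\ref{Lem2} transforms \eqref{IPS} into the asymptotically autonomous system \eqref{RPsi}. Finally, as $\tau>\tau_0$ and $|r|\leq r_0$, Lemma~\ref{Lem3} applies and the near-identity averaging transformation \eqref{RnPsim} takes \eqref{RPsi} into \eqref{RPsiAv}, with the remainders $\widetilde\Lambda_n,\widetilde\Omega_n$ controlled as recorded there. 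The composition of three invertible maps is invertible, which yields the assertion.

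The step I expect to require the most care is the domain compatibility at the junction of \eqref{exch1} and \eqref{exch2}. The hypothesis $(x,y)\in\mathcal D(E_1)$ only controls $I=H(x,y)$, whereas \eqref{exch2} is invertible solely on the moving band $|r|\leq r_0$, that is, for $I$ near $I_\ast(t)$. Since $I_\ast(t)$ is increasing and tends to infinity, one can enlarge $t_0$ so that the lower edge $I_\ast(t)\bigl(1-t^{-\mu}r_0\bigr)^{2h}$ exceeds $E_1$ for every $t>t_0$; then the whole band lies inside $\mathcal D(E_1)$ and the two domains are consistent precisely on the region where $r$ stays bounded, outside of which the reduction is neither needed nor claimed. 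All remaining estimates are the uniform bounds already established after \eqref{rpsys} and in Lemma~\ref{Lem3}, so nothing is left but to assemble the pieces.
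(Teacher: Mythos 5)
Your proposal is correct and follows exactly the paper's route: Theorem~\ref{Th1} is obtained there simply by combining Lemmas~\ref{Lem1},~\ref{Lem2} and~\ref{Lem3}, which is precisely your composition argument. The threshold and domain bookkeeping you spell out is a reasonable elaboration of what the paper leaves implicit, not a different method.
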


\section{Asymptotic regimes}
\label{sec3}
Let $0\leq L\leq \min\{N-M,2M-1\}$ be a whole number such that
\begin{gather}\label{as0}
\Lambda_{K}(\varrho,\varphi)\equiv 0 \quad \forall K<L, \quad \Lambda_{L}(\varrho,\varphi)\not\equiv 0.
\end{gather}
Then, we take sufficiently large $n\geq L$ and consider a system obtained from \eqref{RPsiAv} by the truncation of the remainders $\widetilde \Lambda_n$ and $\widetilde \Omega_n$:
\begin{gather}\label{RPL}
  \frac{d \varrho}{d\tau}=\sum_{K=L}^n \tau^{-\frac{M+K}{N}}\Lambda_K(\varrho,\varphi),\quad
  \frac{d\varphi}{d\tau}=\sum_{K=0}^n \tau^{-\frac{M+K}{N}}\Omega_K(\varrho,\varphi), \quad \tau>\tau_0.
  \end{gather}
The functions $\Lambda_K(\varrho,\varphi)$, $\Omega_K(\varrho,\varphi)$ are defined by \eqref{LOm}.
In particular,  for all $K\in[0,M)$ we have $\Omega_K(\varrho,\varphi)\equiv \mathcal Q_K^0 \varrho$, $\Omega_{M+K}(\varrho,\varphi)=\Omega_{M+K}^0(\varphi)+\mathcal O(\varrho)$, $\Lambda_L(\varrho,\varphi)=\Lambda_L^0(\varphi)+\mathcal O(\varrho)$ as $\varrho\to 0$ uniformly for all $\varphi\in\mathbb R$.

Let us show that system \eqref{RPL} admits at least two different asymptotic regimes depending on the properties of the function $\Lambda_L(\varrho,\varphi)$. The first regime is associated with solutions such that $\varphi(\tau)\to{\hbox{\rm const}}$ as $\tau\to\infty$. Another mode is characterized by an unlimitedly growing phase difference.

Consider the following cases:
\begin{gather}
\label{as1}
\exists \, \varphi_0\in\mathbb R: \quad \Lambda_L^0(\varphi_0)=0, \quad \lambda_L:=\partial\Lambda_L^0(\varphi_0)\neq 0;\\
\label{as2}
\Lambda_L(\varrho,\varphi)\neq 0 \quad \forall\, (\varrho,\varphi)\in\mathbb R^2.
\end{gather}
The degenerate case with $\Lambda_L^0(\varphi)\equiv 0$ is not discussed in this paper.

We have the following.
\begin{Lem}
  Let assumptions \eqref{rc}, \eqref{as0} and \eqref{as1} hold. Then system \eqref{RPL} has a particular solution $\varrho_\ast(\tau)$, $\varphi_\ast(\tau)$ with asymptotic expansion in the form
  \begin{gather}
  \label{rpas}
  \varrho(\tau)=\sum_{K=0}^\infty \varrho_{K} \tau^{-\frac{M+K}{N}}, \quad \varphi(\tau)=\varphi_0+\sum_{K=1}^\infty \varphi_{K} \tau^{-\frac{K}{N}}, \quad \tau\to\infty,
\end{gather}
where $\varrho_K,\varphi_K={\hbox{\rm const}}$.
\end{Lem}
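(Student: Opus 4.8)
The plan is to build a \emph{formal} series solution of \eqref{RPL} in the form \eqref{rpas} by matching powers of $\tau^{-1/N}$, and then to show that an actual trajectory of \eqref{RPL} is asymptotic to it. For the formal step I would substitute the ansatz \eqref{rpas} into \eqref{RPL}. Since $\tfrac{d}{d\tau}\tau^{-j/N}=O(\tau^{-j/N-1})$, differentiation raises the decay order by $\tau^{-1}=\tau^{-N/N}$, so each left-hand side is of strictly higher order than the corresponding right-hand side; order by order the recursion therefore amounts to requiring the right-hand side to vanish, corrected by the already-computed derivative terms. Expanding $\Lambda_K$, $\Omega_K$ about $(\varrho,\varphi)=(0,\varphi_0)$ and using the normal forms recorded before \eqref{as0}, I would observe three facts. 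The leading term $\tau^{-(M+L)/N}\Lambda_L^0(\varphi_0)$ of the amplitude equation vanishes automatically, because $\Lambda_L^0(\varphi_0)=0$ by \eqref{as1}; this is exactly what selects $\varphi_0$. At each higher order the amplitude equation is affine in the unknown phase coefficient $\varphi_K$ with leading coefficient $\partial\Lambda_L^0(\varphi_0)=\lambda_L\neq0$, while the phase equation is affine in the unknown amplitude coefficient $\varrho_K$ with coefficient $\mathcal Q_0^0=\nu^{-M/N}\omega_0(h-1)c_\varkappa^{-(h-1)}\neq0$; in particular $\varrho_0=-\Omega_M^0(\varphi_0)/\mathcal Q_0^0$. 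An induction on the total order $\tau^{-K/N}$ shows that every coefficient needed on the right is already known when the next $\varphi_K$ or $\varrho_K$ is solved for, so the whole series \eqref{rpas} is determined. The two nondegeneracy hypotheses $\lambda_L\neq0$ and $\mathcal Q_0^0\neq0$ are precisely what keeps each step of the recursion invertible.

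Next I would fix a large integer $n$, let $(\bar\varrho,\bar\varphi)$ denote \eqref{rpas} truncated at order $n$, and set $\varrho=\bar\varrho+u$, $\varphi=\bar\varphi+v$. By construction $(\bar\varrho,\bar\varphi)$ satisfies \eqref{RPL} up to residuals $h_1,h_2=O(\tau^{-(M+n+1)/N})$, and $(u,v)$ obeys a system $\dot u=a_{11}u+a_{12}v+g_1-h_1$, $\dot v=a_{21}u+a_{22}v+g_2-h_2$, in which $g_{1,2}$ are at least quadratic in $(u,v)$ with small bounded coefficients, and the leading linear part is $a_{12}=\lambda_L\tau^{-(M+L)/N}+\dots$, $a_{21}=\mathcal Q_0^0\tau^{-M/N}+\dots$, while $a_{11},a_{22}$ decay strictly faster.

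The main obstacle is the justification, and it comes from the fact that $a_{12},a_{21}$ decay only like $\tau^{-M/N}$ with $M/N=\mu/\nu<1$, hence are \emph{not} integrable: the coupling cannot be absorbed as a perturbation of the trivial flow, and the genuine leading dynamics is a slowly varying coupled oscillator. I would control it through its fundamental matrix, reducing the leading linear system to the scalar equation $\ddot u+\tfrac{M+L}{N}\tau^{-1}\dot u-\lambda_L\mathcal Q_0^0\,\tau^{-p}u=0$ with $p=(2M+L)/N<2$, and invoking WKB/Levinson asymptotics: its two solutions behave like $\tau^{-L/(4N)}$ times oscillatory factors when $\lambda_L\mathcal Q_0^0<0$, and like $\exp\big(\pm\tfrac{2\sqrt{|\lambda_L\mathcal Q_0^0|}}{2-p}\,\tau^{1-p/2}\big)$ when $\lambda_L\mathcal Q_0^0>0$. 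With these asymptotics I would recast the remainder problem as a Perron-type integral equation, integrating each spectral component from $\tau=+\infty$ so that the backward propagator entering the integral is bounded (oscillatory case) or exponentially contracting (hyperbolic case).

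Finally, because the propagator then grows at most polynomially in $\tau$ while the residuals $h_{1,2}$ decay faster than any fixed power once $n$ is large, the integral operator is a contraction on the weighted space $\{(u,v)\in C[\tau_\ast,\infty):\sup_{\tau\geq\tau_\ast}\tau^{\kappa}(|u|+|v|)<\infty\}$ for a suitable $\kappa$ just below $(M+n+1)/N$ and $\tau_\ast$ large, and the nonlinear terms $g_{1,2}$ only sharpen the contraction. Its fixed point yields a genuine solution $(u,v)\to0$; since $n$ is arbitrary, bootstrapping upgrades the estimate to the full expansion, so $\varrho_\ast=\bar\varrho+u$, $\varphi_\ast=\bar\varphi+v$ realizes \eqref{rpas}. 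The delicate point throughout is the per-mode choice of integration limits dictated by the sign of $\lambda_L\mathcal Q_0^0$, that is, the distinction between the oscillatory and the hyperbolic phase-locked regimes.
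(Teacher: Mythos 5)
Your formal construction is exactly the paper's: substituting \eqref{rpas} into \eqref{RPL} and matching powers of $\tau^{-1/N}$ gives $\varrho_0=-\Omega_M^0(\varphi_0)/\mathcal Q_0^0$ and, for $K\geq 1$, the triangular linear chain \eqref{RSsys}, in which the $\varrho$-equation determines $\varphi_K$ through $\lambda_L\varphi_K=\mathfrak S_K$ and the $\varphi$-equation then determines $\varrho_K$ through $\mathcal Q_0^0\varrho_K+(\dots)\varphi_K=\mathfrak R_K$; solvability rests precisely on $\lambda_L\neq 0$ and $\mathcal Q_0^0>0$, as you say (your value of $\mathcal Q_0^0$, carrying the factors $\nu^{-M/N}c_\varkappa^{-(h-1)}$, is actually the one that follows from the definitions; only its positivity matters). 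Where you genuinely diverge is the second half: the paper does not prove that the formal series is realized by a true solution but delegates this to \cite{KF13,ANK89,LKJNMP08}, whereas you sketch a direct Perron-type fixed-point argument. That buys self-containedness, and the overall route is viable.

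One load-bearing claim in your sketch is not justified under the hypotheses of this lemma. In the oscillatory case the fundamental matrix of the linearization need not be polynomially bounded: besides the WKB factor $\tau^{-L/(4N)}$, the amplitude carries $\exp\bigl(\tfrac12\int^\tau(a_{11}+a_{22})\,ds\bigr)$, and the diagonal entries are generically of order $\tau^{-2M/N}$ with $2M\leq N$, so this integral can diverge and produce stretched-exponential growth or decay; your reduced scalar equation drops this term even though it dominates the $\tau^{-1}\dot u$ damping you retain. (The trace condition \eqref{asg}, which would control this, is not assumed here.) Consequently the integration limits in the Perron equation are not dictated by the sign of $\lambda_L\mathcal Q_0^0$ alone: a component that decays super-polynomially cannot be integrated from $\tau=+\infty$ against a merely polynomially small residual, and must be integrated forward from $\tau_\ast$. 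This is repairable by the standard stable/unstable/neutral splitting, but as written the contraction estimate fails in part of the parameter range that the lemma covers.
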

\begin{proof}
Substituting these series in system \eqref{RPL} and grouping the terms of the same power of $\tau$ yields $\varrho_0=-\Omega_M^0(\varphi_0)/\mathcal Q_0^0$ with $\mathcal Q_0^0=\omega_0 (h-1)>0$, and the chain of linear equations for the coefficients $\varrho_K$, $\varphi_K$, $K\geq 1$:
\begin{gather}\label{RSsys}
  \mathcal Q_0^0\varrho_{K}+\Big(\partial_\varphi\Omega_M^0(\varphi_0) +\delta_{N,2M}\frac{K}{N}\Big)\varphi_K=\mathfrak R_{K}, \quad \lambda_L\varphi_K=\mathfrak S_K,
\end{gather}
where the functions $\mathfrak R_K$ and $\mathfrak S_K$ are expressed through $\varrho_0,\varphi_0,\dots,\varrho_{K-1},\varphi_{K-1}$. For instance, if $M>3$, we have
\begin{eqnarray*}
    \mathfrak R_1   &=& -Q_1^0 \varrho_0-\Omega_{M+1}^0(\varphi_0),\\
    \mathfrak S_1   &=& -\Lambda_{L+1}^0(0,\varphi_0)-\delta_{N,L+1}\frac{M}{N}\varrho_0,\\
    \mathfrak R_2   &=& - Q_2^0 \varrho_0-Q_1^0 \varrho_1  -\Omega_{M+2}^0(\varphi_0) -\Big(\partial_\varphi\Omega_{M+1}^0(\varphi_0) +\delta_{N,2M+1}\frac{1}{N}\Big)\varphi_1-\partial_\varphi^2\Omega_M^0(\varphi_0)\frac{\varphi_1^2 }{2} ,\\
    \mathfrak S_2   &=& -\Lambda_{L+2}^0(0,\varphi_0)-\partial_\varphi\Lambda_{L+1}(0,\varphi_0) \varphi_1  - \partial^2_\varphi\Lambda_{L}(0,\varphi_0)\frac{\varphi_1^2}{2}-\delta_{N,L+1}\frac{M+1}{N} \varrho_1-\delta_{N,L+2}\frac{M}{N}\varrho_0,\\
    \mathfrak R_3   &=& - Q_3^0 \varrho_0-Q_2^0 \varrho_1-Q_1^0 \varrho_2  -\Omega_{M+3}^0(\varphi_0) -\partial_\varphi^2\Omega_{M+1}^0(\varphi_0)\frac{\varphi_1^2 }{2} -\partial_\varphi^2\Omega_{M}^0(\varphi_0) \varphi_1 \varphi_2 -\partial_\varphi^3\Omega_{M}^0(\varphi_0)\frac{\varphi_1^3 }{6} \\&&-\Big(\partial_\varphi\Omega_{M+2}^0(\varphi_0) +\delta_{N,2M+2}\frac{1}{N}\Big)\varphi_1-\Big(\partial_\varphi\Omega_{M+1}^0(\varphi_0) +\delta_{N,2M+1}\frac{1}{N}\Big)\varphi_2,\\
    \mathfrak S_3   &=& -\Lambda_{L+3}^0(0,\varphi_0) - \partial_\varphi\Lambda_{L+2}(0,\varphi_0)\varphi_1 - \partial_\varphi\Lambda_{L+1}(0,\varphi_0)  \varphi_2 - \partial^2_\varphi \Lambda_{L+1}(0,\varphi_0) \frac{\varphi_1^2}{2}  - \partial^2_\varphi \Lambda_{L}(0,\varphi_0) \varphi_1\varphi_2\\
                && - \partial^3_\varphi \Lambda_{L}(0,\varphi_0)\frac{\varphi_1^3}{6} - \delta_{N,L+1}\frac{M+2}{N}\varrho_2- \delta_{N,L+2}\frac{M+1}{N}\varrho_1 - \delta_{N,L+3}\frac{M}{N}\varrho_0.
\end{eqnarray*}
Note that system \eqref{RSsys} is solvable whenever $\lambda_L\neq 0$. The existence of a particular solution of system \eqref{RPL} with power-law asymptotics at infinity follows from~\cite{KF13,ANK89,LKJNMP08}.
\end{proof}

\begin{Lem}
  Let assumptions \eqref{rc}, \eqref{as0} and \eqref{as2} hold. Then the solutions of system \eqref{RPL} exit from any bounded domain in a finite time.
\end{Lem}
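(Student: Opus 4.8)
The plan is to argue by contradiction: I will show that the leading term of the $\varrho$-equation has a fixed sign and drives $|\varrho(\tau)|$ to grow at a rate whose time integral diverges, so a trajectory cannot remain in a bounded set for all time. The $\varphi$-equation plays no role in the argument; everything is read off from the first equation of \eqref{RPL}.

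Fix a bounded domain $\mathcal U\subset\mathbb R^2$ and choose $d_0>0$ with $\mathcal U\subset\{(\varrho,\varphi):|\varrho|\le d_0\}$. The first and principal step is to extract a uniform bound from \eqref{as2}. Since $\Lambda_L$ is continuous and, by \eqref{as2}, never vanishes on the connected set $\mathbb R^2$, it has a constant sign $\epsilon\in\{+1,-1\}$ by the intermediate value theorem. Because $\Lambda_L(\varrho,\varphi)$ is $2\pi$-periodic in $\varphi$, its modulus attains a positive minimum on the compact set $\{|\varrho|\le d_0\}\times[0,2\pi]$, so there is $c>0$ with $\epsilon\,\Lambda_L(\varrho,\varphi)\ge c$ for all $|\varrho|\le d_0$ and all $\varphi\in\mathbb R$; for the same reason $|\Lambda_K(\varrho,\varphi)|\le C_K$ on this set for each $L<K\le n$.

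Next I would invoke \eqref{as0}, which makes $\tau^{-(M+L)/N}\Lambda_L$ the leading term, so the $\varrho$-equation of \eqref{RPL} reads
\[
\frac{d\varrho}{d\tau}=\tau^{-\frac{M+L}{N}}\Lambda_L(\varrho,\varphi)+\sum_{K=L+1}^{n}\tau^{-\frac{M+K}{N}}\Lambda_K(\varrho,\varphi).
\]
As long as the trajectory stays in $\{|\varrho|\le d_0\}$, the bounds above give
\[
\epsilon\,\frac{d\varrho}{d\tau}\ge\tau^{-\frac{M+L}{N}}\Big(c-\sum_{K=L+1}^{n}C_K\,\tau^{-\frac{K-L}{N}}\Big).
\]
Choosing $\tau_3\ge\tau_0$ so large that the bracket is at least $c/2$ for all $\tau\ge\tau_3$, I obtain $\epsilon\,d\varrho/d\tau\ge (c/2)\,\tau^{-(M+L)/N}$ on the domain.

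Finally I would integrate and use the structural constraint $0\le L\le\min\{N-M,2M-1\}$, which yields $M+L\le N$, hence $(M+L)/N\le 1$ and
\[
\int_{\tau_3}^{\tau}s^{-\frac{M+L}{N}}\,ds\longrightarrow+\infty,\qquad \tau\to\infty,
\]
either as a positive power of $\tau$ when $L<N-M$ or as $\ln(\tau/\tau_3)$ when $L=N-M$. Suppose a solution stayed in $\mathcal U$ for all $\tau\ge\tau_*$, where $\tau_*$ is the larger of $\tau_3$ and the initial time. Then the right-hand sides of \eqref{RPL} are bounded along the trajectory, so no finite-time blow-up occurs and the solution extends to $[\tau_*,\infty)$; integrating the last inequality gives $\epsilon(\varrho(\tau)-\varrho(\tau_*))\to+\infty$, contradicting $|\varrho|\le d_0$. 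Concretely, since the integral is continuous and increases to $+\infty$, it reaches the value $4d_0/c$ at some finite $\tau_1$, and then $\epsilon\varrho(\tau_1)\ge\epsilon\varrho(\tau_*)+2d_0\ge d_0$, so $|\varrho(\tau_1)|>d_0$ and the trajectory has already left $\mathcal U$. The crux of the argument is the divergence of $\int^{\infty}s^{-(M+L)/N}\,ds$, which hinges precisely on $L\le N-M$: were the exponent to exceed $1$, the integral would converge and $\varrho$ could remain bounded. The only mild technical points are the uniform lower bound on $|\Lambda_L|$ (supplied by \eqref{as2} together with continuity, $\varphi$-periodicity, and compactness) and the domination of the tail terms for large $\tau$.
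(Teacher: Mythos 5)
Your proof is correct and follows essentially the same route as the paper's: a uniform positive lower bound on $|\Lambda_L|$ over the strip $\{|\varrho|\le d_0\}$ (obtained from \eqref{as2} together with continuity and periodicity in $\varphi$), domination of the higher-order terms for large $\tau$, and integration of the resulting differential inequality, using $M+L\le N$ to force $|\varrho|$ past $d_0$ in finite time. The paper additionally records a growth estimate for $\varphi$, but, as in your argument, the $\varrho$-estimate alone already yields the exit.
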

\begin{proof}
  Since $\Lambda_L(\varrho,\varphi)\neq 0$ for all $(\varrho,\varphi)\in\mathbb R^2$, it follows that for all $\delta>0$ there exist $C_1,C_2,C_3>0$ and $\tau_1\geq \tau_0$ such that
  \begin{gather*}
   \Big|\frac{d\varrho}{d\tau}\Big|\geq \tau^{-\frac{M+L}{N}} C_1, \quad \Big|\frac{d\varphi}{d\tau}\Big|\geq \tau^{-\frac{M}{N}} \Big(C_2|\varrho|-C_3\Big)
  \end{gather*}
for all $|\varrho|\leq \delta$, $\varphi\in\mathbb R$ and $\tau\geq \tau_1$. Integrating the last inequalities with respect to $\tau$ in the case $L<N-M$ yields
\begin{eqnarray*}
  |\varrho(\tau)-\varrho(\tau_1)|&\geq& \tilde C_1 \Big(\tau^{\frac{N-M-L}{N}}-\tau_1^{\frac{N-M-L}{N}}\Big), \\
  |\varphi(\tau)-\varphi(\tau_1)|&\geq& \tilde C_2 \Big(\tau^{\frac{2N-2M-L}{N}}-\tau_1^{\frac{2N-2M-L}{N}}\Big)-\tilde C_3 \Big(\tau^{\frac{N-M}{N}}-\tau_1^{\frac{N-M}{N}}\Big),
\end{eqnarray*}
as $\tau\geq \tau_1$ with positive constants $\tilde C_1=N C_1/(N-M-L)$, $\tilde C_2=N \tilde C_1 C_2/(2N-2M-L)$, $\tilde C_3=N(C_3+C_2|\varrho(\tau_1)|+\tilde C_1 C_2\tau_1^{(N-M-L)/N})/(N-M)$. Similar estimates hold in the case $L=N-M$. Hence, there exists $\tau_2\geq  \tau_1$ such that $|\rho(\tau_2)|+|\varphi(\tau_2)|\geq \delta$.
\end{proof}

Let us remark that the case of \eqref{as1} corresponds to a phase locking regime, when the phase $\phi(t)$ for solutions of the perturbed system \eqref{FulSys} is synchronized with the perturbations,  $\phi(t)-\varkappa^{-1}S(t)=\mathcal O(1)$ as $t\to\infty$, and the energy $E(t)$ increases significantly. The stability of such solutions is discussed in the next section.

In the case of \eqref{as2}, the phase $\phi(t)$ of solutions can significantly differ from that of the perturbations and the power-mode growth of the energy $E(t)$ does not occur. Such solutions correspond to a phase drifting. The qualitative analysis of this case requires special attention and is not discussed in this paper.

\section{Stability of phase locking}
\label{sec4}
Let $\varrho_\ast(\tau)$, $\varphi_\ast(\tau)$ be a solution of system \eqref{RPL} with asymptotics \eqref{rpas}. First, the stability of this solution in the truncated system is investigated. Then, we discuss its persistence in the full system \eqref{RPsiAv}.

Substitution $\varrho(\tau)=\varrho_\ast(\tau)+\hat\varrho(\tau)$, $\varphi(\tau)=\varphi_\ast(\tau)+\hat\varphi(\tau)$ into \eqref{RPL} gives the following system with a fixed point at $(0,0)$:
\begin{gather}
\label{rp0}
    \tau^{\frac{M}{N}}\frac{d\hat\varrho}{d\tau}=  {\bf  \Lambda} (\hat\varrho,\hat\varphi,\tau),\quad
    \tau^{\frac{M}{N}}\frac{d\hat\varphi}{d\tau}=  {\bf  \Omega} (\hat\varrho,\hat\varphi,\tau),
\end{gather}
where
\begin{eqnarray*}
  {\bf  \Lambda}(\hat\varrho,\hat\varphi,\tau)  & \equiv &\sum_{K=L}^{n}\tau^{-\frac{K}{N}}\Big( \Lambda_{K}(\varrho_\ast+\hat\varrho,\varphi_\ast+\hat\varphi)-
   \Lambda_{K}(\varrho_\ast,\varphi_\ast)\Big)\\
   &=&\tau^{-\frac{L}{N}}  \Big(\lambda_L \hat\varphi+\partial_{\hat\varrho}\Lambda_L(0,\varphi_0) \hat\varrho +\mathcal O(\Delta^2)\Big)\big(1+    \mathcal O( \tau^{-\frac{1}{N}}) \big),\\
 {\bf  \Omega}(\hat\varrho,\hat\varphi,\tau) & \equiv &\sum_{K=0}^{n}\tau^{-\frac{K}{N}}\Big( \Omega_{K}(\varrho_\ast+\hat\varrho,\varphi_\ast+\hat\varphi)-
   \Omega_{K}(\varrho_\ast,\varphi_\ast)\Big)\\
   &=&\Big(\mathcal Q_0^0 \hat\varrho +\tau^{-\frac{M}{N}}\big(\partial_{\hat\varphi}\Omega_M(0,\varphi_0)\hat\varphi+\mathcal O(\Delta^2)\big)\Big) \big(1+    \mathcal O( \tau^{-\frac{1}{N}}) \big)
\end{eqnarray*}
as $\tau\to\infty$ and $\Delta=\sqrt{\hat\varrho^2+\hat\varphi^2}\to 0$. Note that if $L<M$, then $\partial_{\hat \varrho}\Lambda_L(\hat\varrho,\hat\varphi)\equiv 0$.

\subsection{Linear analysis}
Consider the linearized system:
\begin{gather*}
 \tau^{\frac{M}{N}}\frac{d }{d\tau} \begin{pmatrix} \hat\rho \\ \hat\varphi \end{pmatrix}=
 {\bf a}(\tau)\begin{pmatrix} \hat\rho \\ \hat\varphi \end{pmatrix}, \quad
 {\bf a}(\tau)\equiv
  \begin{pmatrix*}[r]
            \displaystyle    \partial_\varrho {\bf  \Lambda}(0,0,\tau) & \displaystyle  \partial_\varphi { \bf \Lambda}(0,0,\tau)  \\
            \displaystyle  \partial_\varrho {\bf  \Omega}(0,0,\tau) & \displaystyle   \partial_\varphi {\bf  \Omega}(0,0,\tau)   \end{pmatrix*}.
\end{gather*}
The roots of the characteristic equation $|{\bf a}(t)-e{\bf I}|=0$ have the form:
\begin{gather*}
    e_\pm(\tau)=\frac{1}{2}{\hbox{\rm tr}}\,{\bf a}(\tau) \pm \frac{1}{2}
    \sqrt{
        \big({\hbox{\rm tr}}\,{\bf a}(\tau)\big)^2-4{\hbox{\rm det}} \,{\bf a}(\tau).
        }
\end{gather*}
Taking into account \eqref{rpas} and the structure of the functions $\Lambda_K$, $\Omega_K$, we obtain
\begin{gather*}
  {\hbox{\rm tr}} \,{\bf a}(\tau)=\mathcal O(\tau^{-\frac{M}{N}}), \quad {\hbox{\rm det}}\, {\bf a}(\tau)=-\lambda_L \mathcal Q_0^0   \tau^{-\frac{L}{N}}\big(1+\mathcal O(\tau^{-\frac{1}{N}})\big), \quad \tau\to\infty.
\end{gather*}
Since $0\leq L\leq \min\{N-M,2M-1\}$, we see that if $\lambda_L>0$, both eigenvalues $e_+(\tau)$,  $e_-(\tau)$ are real and have different signs:
\begin{gather*}
  e_\pm(\tau)= 2\tau^{-\frac{L}{2N}}\sqrt{\lambda_L \mathcal Q_0^0 }\big(1+\mathcal O(\tau^{-\frac{1}{N}})\big), \quad \tau\to\infty.
\end{gather*}
In this case, the fixed point $(0,0)$ of the linearized system is a saddle in the asymptotic limit, and the particular solution $\varrho_\ast(\tau)$, $\varphi_\ast(\tau)$ of system \eqref{RPL} is unstable.
\begin{Lem}
  Let assumptions \eqref{as0} and \eqref{as1} hold with $\lambda_L>0$. Then the solution $\varrho_\ast(\tau)$, $\varphi_\ast(\tau)$ of system \eqref{RPL} with asymptotics \eqref{rpas} is unstable.
\end{Lem}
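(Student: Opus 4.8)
The plan is to restate the assertion as the instability of the zero solution of system \eqref{rp0} and to produce a Chetaev function adapted to the asymptotic saddle detected by the linear analysis. Since the linearization has real eigenvalues of opposite sign, I would work with the indefinite quadratic form $V=\hat\varrho\hat\varphi$, whose positivity set $\{V>0\}$ (the first and third quadrants) contains the asymptotically unstable direction. Differentiating along \eqref{rp0} gives $\tau^{M/N}\dot V=\hat\varphi\,\mathbf{\Lambda}(\hat\varrho,\hat\varphi,\tau)+\hat\varrho\,\mathbf{\Omega}(\hat\varrho,\hat\varphi,\tau)$, and inserting the expansions of $\mathbf{\Lambda}$, $\mathbf{\Omega}$ recorded just after \eqref{rp0} isolates the leading part as the positive-definite form $\mathcal Q_0^0\hat\varrho^2+\lambda_L\tau^{-L/N}\hat\varphi^2$. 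Both coefficients are positive: $\lambda_L>0$ by \eqref{as1}, and $\mathcal Q_0^0=\omega_0(h-1)>0$. The remaining contributions are the cross terms $\tau^{-L/N}\partial_{\hat\varrho}\Lambda_L(0,\varphi_0)\,\hat\varrho\hat\varphi$ and $\tau^{-M/N}\partial_{\hat\varphi}\Omega_M(0,\varphi_0)\,\hat\varrho\hat\varphi$, the cubic remainders $\tau^{-L/N}\hat\varphi\,\mathcal O(\Delta^2)$ and $\tau^{-M/N}\hat\varrho\,\mathcal O(\Delta^2)$, and overall factors $1+\mathcal O(\tau^{-1/N})$.

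The next step is to absorb every correction into half of the leading form by Young's inequality, for $\tau$ large and $\Delta\le\delta$ small, yielding $\tau^{M/N}\dot V\ge \tfrac12\big(\mathcal Q_0^0\hat\varrho^2+\lambda_L\tau^{-L/N}\hat\varphi^2\big)>0$ on $0<\Delta\le\delta$. This is exactly where the restriction $0\le L\le\min\{N-M,2M-1\}$ is used: the $\tau^{-M/N}$ cross term and the quadratic remainder are dominated by $\lambda_L\tau^{-L/N}\hat\varphi^2$ only because $L<2M$ forces $\tau^{(L-2M)/N}\to0$, while the $\tau^{-L/N}$ cross term is harmless since $\partial_{\hat\varrho}\Lambda_L\equiv0$ when $L<M$ and $\tau^{-L/N}\to0$ when $L\ge1$. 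Using then $\mathcal Q_0^0\ge\lambda_L\tau^{-L/N}$ for large $\tau$ together with $V\le\tfrac12\Delta^2$, I obtain the differential inequality $\dot V\ge\lambda_L\tau^{-(M+L)/N}V$ on the cone. Because $M+L\le N$, the integral $\int^\tau s^{-(M+L)/N}\,ds$ diverges, so a trajectory issuing from $\{V>0\}$ arbitrarily close to the origin satisfies $V(\tau)\ge V(\tau_\ast)\exp\big(\lambda_L\int_{\tau_\ast}^\tau s^{-(M+L)/N}\,ds\big)\to\infty$. Since $V$ is bounded by $\tfrac12\delta^2$ inside the $\delta$-neighbourhood and, being strictly increasing, cannot reach the boundary $\{V=0\}$, the trajectory must leave that neighbourhood; this is the desired instability of $\varrho_\ast(\tau),\varphi_\ast(\tau)$.

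The main obstacle is the bookkeeping in $\dot V$: one must check that the single positive-definite form $\mathcal Q_0^0\hat\varrho^2+\lambda_L\tau^{-L/N}\hat\varphi^2$, whose two coefficients sit on different $\tau$-scales, dominates all cross and cubic corrections uniformly on the cone. It is precisely the two inequalities $L\le N-M$, ensuring divergence of the time integral (hence genuine escape rather than a bounded increase of $V$), and $L\le 2M-1$, ensuring absorption of the $\tau^{-M/N}$ contributions, that make the construction go through.
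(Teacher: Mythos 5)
Your argument is correct, but it proves the lemma by a genuinely different route than the paper. The paper offers no separate proof of this statement: the instability is read off from the linear analysis alone, by computing the eigenvalues $e_\pm(\tau)$ of the time-dependent matrix ${\bf a}(\tau)$ and observing that for $\lambda_L>0$ they are real and of opposite sign, so that $(0,0)$ is a saddle in the asymptotic limit. Your Chetaev-type construction with $V=\hat\varrho\hat\varphi$ turns that spectral heuristic into a self-contained nonlinear proof: the differential inequality $\frac{dV}{d\tau}\geq\lambda_L\tau^{-(M+L)/N}V$ on the cone $\{V>0\}$, combined with the divergence of $\int^\tau s^{-(M+L)/N}\,ds$ guaranteed by $L\leq N-M$, forces escape from any small neighbourhood of the origin; this handles the nonlinear remainders and the non-autonomous corrections that a pointwise eigenvalue computation leaves implicit, and it parallels the Lyapunov-function machinery the paper itself uses for the stable case $\lambda_L<0$ (compare \eqref{LF}--\eqref{in2}). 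One fine point in your absorption step deserves attention: when $L>M$ the remainder $\tau^{-M/N}\hat\varrho\,\mathcal O(\Delta^2)$ contains pure $\hat\varphi^2$ contributions of size $\tau^{-M/N}\delta\hat\varphi^2$, which are \emph{not} dominated by $\lambda_L\tau^{-L/N}\hat\varphi^2$ alone as $\tau\to\infty$; they must instead be split against the geometric mean via $2\sqrt{\mathcal Q_0^0\lambda_L}\,\tau^{-L/(2N)}|\hat\varrho\hat\varphi|\leq \mathcal Q_0^0\hat\varrho^2+\lambda_L\tau^{-L/N}\hat\varphi^2$, i.e.\ Young's inequality with $\tau$-dependent weights, which closes precisely because $L\leq 2M-1$ --- the condition you correctly single out as the operative one.
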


In the opposite case, when $\lambda_L<0$, the eigenvalues are complex:
\begin{gather*}
  e_\pm(\tau)= 2i\tau^{-\frac{L}{2N}}\sqrt{|\lambda_L| \mathcal Q_0^0 }\big(1+\mathcal O(\tau^{-\frac{1}{N}})\big), \quad \Re e_\pm(\tau)=\mathcal O(\tau^{-\frac{M}{N}}), \quad  \tau\to\infty.
\end{gather*}
Hence, the fixed point $(0,0)$ is a centre in the asymptotic limit, and the linear analysis fails to determine the stability of the solution $\varrho_\ast(\tau)$, $\varphi_\ast(\tau)$ in the full nonlinear system (see, for example, \cite{OS20a}).

\subsection{Nonlinear analysis}
Assume that there exists $D\geq M$ such that
\begin{gather}\label{asg}
  \partial_{ \varrho} \Lambda_K( \varrho, \varphi)+\partial_{ \varphi} \Omega_K( \varrho, \varphi)\equiv 0 \quad \forall\, K< D,\\
  \nonumber\gamma_D:= \partial_{ \varrho} \Lambda_D( 0,\varphi_0)+\partial_{ \varphi} \Omega_D( 0,\varphi_0)\neq 0.
\end{gather}
We choose $n\geq L+D$ in \eqref{RPL}, then we have the following.

\begin{Lem}
  Let assumptions \eqref{rc}, \eqref{as0}, \eqref{as1}, \eqref{asg} hold with $\lambda_L<0$. Then then the solution $\varrho_\ast(\tau)$, $\varphi_\ast(\tau)$ of system \eqref{RPL} with asymptotics \eqref{rpas} is
  \begin{itemize}
    \item exponentially stable if $\gamma_D<0$ and $M+D<N$;
    \item polynomially stable if $\gamma_D+\frac{L}{N}<0$ and $M+D=N$;
    \item stable if $\gamma_D<0$, $L=0$, and $M+D>N$;
    \item unstable if $\gamma_D>0$.
  \end{itemize}
\end{Lem}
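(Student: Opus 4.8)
The plan is to construct a Lyapunov function for the nonlinear system \eqref{rp0} whose leading part is an energy adapted to the asymptotic centre structure revealed by the linear analysis. Since $\lambda_L<0$, the linearization has purely imaginary eigenvalues $\pm 2i\tau^{-L/(2N)}\sqrt{|\lambda_L|\mathcal Q_0^0}$, so I expect the natural candidate to be a quadratic form diagonalizing the leading antisymmetric part. Concretely, I would set
\begin{gather*}
  V(\hat\varrho,\hat\varphi,\tau)=\frac{1}{2}\mathcal Q_0^0\hat\varrho^2+\frac{1}{2}|\lambda_L|\tau^{-\frac{L}{N}}\hat\varphi^2,
\end{gather*}
chosen so that the leading cross terms $\mathcal Q_0^0\hat\varrho\cdot\partial_\tau\hat\varphi$ and $|\lambda_L|\tau^{-L/N}\hat\varphi\cdot\partial_\tau\hat\varrho$ coming from the off-diagonal entries of ${\bf a}(\tau)$ cancel. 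First I would compute the total derivative of $V$ along trajectories of \eqref{rp0}, using the expansions of ${\bf\Lambda}$ and ${\bf\Omega}$ together with \eqref{rpas}; the antisymmetric leading terms cancel by the construction of $V$, leaving contributions of two types: the explicit $\tau$-dependence $\partial_\tau V=-\tfrac{L}{2N}|\lambda_L|\tau^{-1-L/N}\hat\varphi^2$, and the diagonal (trace) part of the vector field, which by \eqref{asg} vanishes to all orders below $K=D$ and first contributes at order $\tau^{-(M+D)/N}$ with coefficient proportional to $\gamma_D$.

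The heart of the argument is then to collect these contributions into an estimate of the form
\begin{gather*}
  \tau^{\frac{M}{N}}\frac{dV}{d\tau}\Big|_{\eqref{rp0}}
    = \Big(\gamma_D\,\tau^{-\frac{D}{N}}+\delta_{M+D,N}\frac{L}{N}\,\tau^{-\frac{D}{N}}+o(\tau^{-\frac{D}{N}})\Big)\,2V
      +\mathcal O\big(\Delta^3\big),
\end{gather*}
where $\Delta^2\asymp V$ in the region $|\hat\varrho|,|\hat\varphi|$ small (after absorbing the $\tau^{-L/N}$ weight). The key point is that the coefficient controlling the sign of $\dot V$ is exactly $\gamma_D$ when $M+D\ne N$, and $\gamma_D+L/N$ in the borderline case $M+D=N$ where the explicit time-dependence of $V$ enters at the same order as the trace term. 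Reading off the exponent $-(M+D)/N$ and integrating $dV/d\tau \lesssim \gamma_D\,\tau^{-(M+M+D)/N}V$ gives the three cases: if $M+D<N$ the integral $\int^\tau s^{-(2M+D)/N}\,ds$ diverges, yielding $V(\tau)\leq V(\tau_\ast)\exp(\gamma_D\int^\tau s^{-(2M+D)/N}ds)$ with a divergent (hence exponential-type in the $\tau$-variable) decay when $\gamma_D<0$; if $M+D=N$ the exponent is $-1$ up to the $L/N$ correction, giving a power-law bound $V(\tau)\lesssim\tau^{\gamma_D+L/N}$, whence polynomial stability when $\gamma_D+L/N<0$; and if $M+D>N$ the integral converges, so $V$ stays bounded and one only gets Lyapunov stability, which forces $L=0$ so that the weight $\tau^{-L/N}$ in $V$ does not degenerate and $\Delta$ is genuinely controlled by $V$.

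For the instability case $\gamma_D>0$ I would use the same $V$ as a Chetaev-type function: the same computation shows $\dot V$ is positive to leading order in a cone near the origin, so trajectories starting arbitrarily close escape, giving instability. The main obstacle I anticipate is the careful bookkeeping needed to justify that the cubic and higher remainder $\mathcal O(\Delta^3)$, as well as all the subleading terms in the $\tau$-expansions of ${\bf\Lambda}$, ${\bf\Omega}$ (which carry extra factors $\tau^{-1/N},\tau^{-2/N},\dots$), are genuinely dominated by the principal quadratic term on a suitable small neighbourhood $\Delta\leq\delta$ for $\tau\geq\tau_\ast$; this requires choosing $n$ large enough (here $n\geq L+D$) so that the truncation remainder in \eqref{RPL} does not interfere, and choosing $\delta$ small and $\tau_\ast$ large in the right order. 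The borderline case $M+D=N$ is the most delicate, since there the fate of the solution is decided by a competition between the damping coefficient $\gamma_D$ and the geometric factor $L/N$ produced by differentiating the time-dependent weight in $V$, and one must verify that no other term of order $\tau^{-1}$ has been overlooked.
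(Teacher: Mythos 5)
Your overall strategy --- an adapted quadratic Lyapunov function whose decay rate is governed by $\gamma_D$, followed by integration of the resulting differential inequality to separate the cases $M+D<N$, $M+D=N$, $M+D>N$ --- is the same as the paper's, and your treatment of the integration stage (including the role of $L/N$ in the borderline case and the need for $L=0$ when $M+D>N$) is correct. But there is a genuine gap at the central step: with the choice $V=W_0=\tfrac12\big(\mathcal Q_0^0\hat\varrho^2+|\lambda_L|\tau^{-L/N}\hat\varphi^2\big)$ the claimed identity $\tau^{M/N}\dot V=\big(\gamma_D+\delta_{M+D,N}\tfrac{L}{N}\big)\tau^{-D/N}\,2V+o(\tau^{-D/N})+\mathcal O(\Delta^3)$ is false in general. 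The derivative of a quadratic form $\tfrac12 x^{T}Px$ along $\dot x=Ax$ is $\tfrac12 x^{T}(PA+A^{T}P)x$, and condition \eqref{asg} controls only ${\rm tr}\,A$; it does not make $PA+A^{T}P$ proportional to $P$. Concretely, after the leading cross terms cancel, $\tau^{M/N}\dot W_0$ still contains the uncancelled quadratic terms $\mathcal Q_0^0\,\partial_{\hat\varrho}\Lambda_L(0,\varphi_0)\,\tau^{-L/N}\hat\varrho^2$ (nonzero in general when $L\geq M$; this coefficient appears explicitly in the paper's \eqref{LF}), $|\lambda_L|\,\partial_{\hat\varphi}\Omega_M(0,\varphi_0)\,\tau^{-(L+M)/N}\hat\varphi^2$, and imperfectly cancelled cross terms of orders $\tau^{-(L+k)/N}$, $k\geq 1$, produced by the subleading coefficients $\Lambda_{L+k}$, $\Omega_k$. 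Whenever $D>L$ (e.g.\ $L=0$, $D=M$, as in the paper's first example) these sit at orders strictly lower than $\tau^{-D/N}$ and are not sign-definite multiples of $W_0$, so they dominate the $\gamma_D$-term and destroy the differential inequality; they vanish only on average over the fast rotation, which $W_0$ alone does not see.

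The paper closes exactly this gap by not using the leading quadratic approximation: it rewrites \eqref{rp0} in the form \eqref{RPHam1}, $\tau^{M/N}\dot{\hat\varrho}=-\partial_{\hat\varphi}\Theta$, $\tau^{M/N}\dot{\hat\varphi}=\partial_{\hat\varrho}\Theta+\Upsilon$, and takes $V=\Theta+\tau^{-D/N}\gamma_D\hat\varrho\hat\varphi/2+\cdots$ as in \eqref{LF}. Because $\Theta$ is the exact ``Hamiltonian'' of the conservative part of the vector field, that entire part --- to all orders in $\tau$ and in $\Delta$ --- contributes nothing to $\dot\Theta$; what survives is only $\partial_\tau\Theta$ and $\tau^{-M/N}\partial_{\hat\varphi}\Theta\cdot\Upsilon$, and it is in $\Upsilon=\tau^{-D/N}\hat\varphi\big(\gamma_D+\mathcal O(\Delta)+\mathcal O(\tau^{-1/N})\big)$ that assumption \eqref{asg} enters. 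The explicit cross-term corrections in \eqref{LF} are then needed to convert the resulting $\hat\varphi^2$-only contribution into the full definite form $\gamma_D\tau^{-(M+D)/N}W_0\big(1+\mathcal O(\Delta)+\mathcal O(\tau^{-1/N})\big)$ of \eqref{DLF}. To repair your argument you would either have to adopt this Hamiltonian-plus-corrections construction, or insert a further averaging/normal-form step that removes the oscillating quadratic terms before applying $W_0$.
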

\begin{proof}
The proof is based on the construction of suitable Lyapunov function for system \eqref{rp0}. We first note that system \eqref{rp0} can be written as
\begin{gather}
  \label{RPHam1}
  \tau^{\frac{M}{N}}\frac{d\hat\varrho}{d\tau}=-\partial_{\hat \varphi} {  \Theta}(\hat\varrho,\hat\varphi,\tau), \quad \tau^{\frac{M}{N}}\frac{d\hat\varphi}{d\tau}= \partial_{\hat \varrho} {  \Theta}(\hat\varrho,\hat\varphi,\tau)+ {  \Upsilon}(\hat\varrho,\hat\varphi,\tau),
\end{gather}
with
\begin{eqnarray*}
  {  \Theta}(\hat\varrho,\hat\varphi,\tau)  \equiv   \int\limits_0^{\hat\varrho}{ \bf \Omega}(r,0,\tau)\,dr-\int\limits_0^{\hat\varphi}{\bf  \Lambda}(\hat\varrho,\theta,\tau)\,d\theta,\quad
  {  \Upsilon}(\hat\varrho,\hat\varphi,\tau) \equiv  \int\limits_0^{\hat\varphi}\Big(\partial_{\hat\varrho}{ \bf \Lambda}(\hat\varrho,\theta,\tau)+\partial_{\theta}{  \bf \Omega}(\hat\varrho,\theta,\tau)\Big)\,d\theta.
\end{eqnarray*}
It can easily be checked that $\Theta(\hat\varrho,\hat\varphi,\tau)=\Theta_L(\hat\varrho,\hat\varphi,\tau)+\mathcal O(\Delta^2 \tau^{-\frac{L+1}{N}})$, where
\begin{eqnarray*}
 \Theta_L(\hat\varrho,\hat\varphi,\tau)
    & \equiv &
        \sum_{K=0}^L\tau^{-\frac{K}{N}} \int\limits_0^{\hat\varrho}\Big(\Omega_K(r,\varphi_0) - \Omega_K(0,\varphi_0)\Big)\,dr -\tau^{-\frac{L}{N}}\int\limits_0^{\hat\varphi}\Big( \Lambda_L(\hat\varrho,\theta+\varphi_0)-\Lambda_L(0,\varphi_0)\Big)\,d\theta\\
    &=&
        \mathcal Q_0^0  \frac{\hat\varrho^2}{2} \Big(1+\mathcal O(\tau^{-\frac{1}{N}})\Big)  - \tau^{-\frac{L}{N}}\Big(\lambda_L \frac{\hat\varphi^2}{2}+\partial_{\hat\varrho}\Lambda_L(0,\varphi_0)\hat\varrho\hat\varphi+\hat\varphi\mathcal O(\Delta^2)   \Big),
\end{eqnarray*}
 as $\Delta\to 0$ and $\tau \to\infty$.
From \eqref{asg} it follows that
\begin{eqnarray*}
  \Upsilon(\hat\varrho,\hat\varphi,\tau)&\equiv&\sum_{K=M}^n\tau^{-\frac{K}{N}}\int\limits_0^{\hat\varphi} \Big(\partial_{\hat\varrho} \Lambda_K(\varrho_\ast+\hat\varrho,\varphi_\ast+\theta)+
  \partial_\theta \Omega_K(\varrho_\ast+\hat\varrho,\varphi_\ast+\theta)\Big)\,d\theta\\
  & =& \tau^{-\frac{D}{N}} \hat\varphi\big(\gamma_D +\mathcal O(\Delta)+\mathcal O(\tau^{-\frac{1}{N}})\big), \quad \Delta\to0, \quad \tau\to\infty.
\end{eqnarray*}
Here, the asymptotic estimates are uniform with respect to $(\hat\varrho,\hat\varphi,\tau)$ in the domain $\{(\hat\varrho,\hat\varphi,\tau)\in\mathbb R^3: \Delta\leq \Delta_\ast,\tau\geq \tau_\ast\}$ with some constants $\Delta_\ast>0$ and $\tau_\ast\geq \tau_0$.

Consider the combination
\begin{gather}\label{LF}
  V(\hat\varrho,\hat\varphi,\tau)=\Theta(\hat\varrho,\hat\varphi,\tau)+\tau^{-\frac{D}{N}} \gamma_D\frac{\hat\varrho\hat\varphi}{2}+\tau^{-\frac{D+L}{N}} \gamma_D \partial_{\hat\varrho}\Lambda_L(0,\varphi_0)   \frac{3\hat\varphi^2}{4\mathcal Q_0^0}
\end{gather}
as a Lyapunov function candidate for system \eqref{RPHam1}. It is easily shown that for all $\varepsilon\in(0,1)$ there exist $\Delta_1>0$ and $\tau_1\geq\tau_0$ such that
\begin{gather}\label{Veps}
 (1-\varepsilon) W_0(\hat\varrho,\hat\varphi,\tau) \leq V(\hat\varrho,\hat\varphi,\tau)\leq (1+\varepsilon) W_0(\hat\varrho,\hat\varphi,\tau)
\end{gather}
for all $(\hat\varrho,\hat\varphi,\tau)\in\mathbb R^3$ such that $\tau\geq \tau_1$ and $W_0(\hat\varrho,\hat\varphi,\tau_1)\leq\Delta_1^2$,
where
\begin{gather}\label{W0}
      W_0(\hat\varrho,\hat\varphi,\tau) \equiv \frac{1}{2}\Big(\mathcal Q_0^0\hat\varrho^2 +\tau^{-\frac L N} |\lambda_L|\hat\varphi^2\Big).
\end{gather}
Note that for any $\tau_\ast>0$ the following inequalities hold:
\begin{gather*}
\tau^{-\frac LN}W_0(\hat\varrho,\hat\varphi,\tau_\ast)\leq  W_0(\hat\varrho,\hat\varphi,\tau)\leq  W_0(\hat\varrho,\hat\varphi,\tau_\ast)
\end{gather*}
for all $\tau\geq \tau_\ast$ and $(\hat\varrho,\hat\varphi)\in\mathbb R^2$.

Calculating the total derivative of $V$ with respect to $\tau$ on the trajectories of system \eqref{RPHam1}, we obtain
\begin{gather}
    \label{DLF}
        \begin{split}
  \frac{dV}{d\tau}\Big|_{\eqref{RPHam1}}&\equiv  \partial_\tau V(\hat\varrho,\hat\varphi,\tau)+\tau^{-\frac{M}{N}}\partial_{\hat\varphi} V(\hat\varrho,\hat\varphi,\tau) \Upsilon(\hat\varrho,\hat\varphi,\tau)\\
  &=\tau^{-\frac{M+D}{N}}\gamma_D \left( \mathcal Q_0^0  \frac{\hat\varrho^2}{2}[1+\mathcal O(\tau^{-\frac{1}{N  }})] - \tau^{-\frac{L}{N}}\frac{\lambda_L}{2}
  [ \hat\varphi^2 +\hat\varphi\mathcal O(\Delta^2)  ]+\mathcal O\Big(\Delta^2\tau^{-\frac{L+1}{N}}\Big)\right)\\
  &=\tau^{-\frac{M+D}{N}}\gamma_D W_0(\hat\varrho,\hat\varphi,\tau) \big(1+\mathcal O(\Delta)+\mathcal O(\tau^{-\frac{1}{N}})\big)
  \end{split}
\end{gather}
as $\Delta\to 0$ and $\tau\to\infty$. Hence, there exist $0<\Delta_2\leq \Delta_1$ and $\tau_2\geq\tau_1$ such that
\begin{eqnarray}
\label{in1}
    \frac{dV}{d\tau}\Big|_{\eqref{RPHam1}}  \leq    -\tau^{-\frac{M+D}{N}}|\gamma_D|\left(\frac{ 1-\varepsilon }{ 1+\varepsilon }\right) V(\hat\varrho,\hat\varphi,\tau)\leq 0 & \quad & \text{if} \quad \gamma_D<0;\\
\label{in2}
    \frac{dV}{d\tau}\Big|_{\eqref{RPHam1}} \geq    \tau^{-\frac{M+D}{N}} \gamma_D \left(\frac{ 1-\varepsilon }{ 1+\varepsilon }\right) V(\hat\varrho,\hat\varphi,\tau)\geq 0 & \quad & \text{if} \quad\gamma_D>0
\end{eqnarray}
for all $(\hat\varrho,\hat\varphi,\tau)\in\mathbb R^3$ such that $\tau\geq \tau_2$ and $W_0(\hat\varrho,\hat\varphi,\tau_2)\leq\Delta_2^2$.

Let $\gamma_D<0$. Then integrating \eqref{in1} with respect to $\tau$ yields
\begin{gather*}
 \begin{split}
  0\leq W_0\big(\hat\varrho(\tau),\hat\varphi(\tau),\tau_2\big)\leq C_0 \tau^{\frac{L}{N}}
  \exp\left(-\frac{|\gamma_D|N}{N-M-D}\Big(\frac{1-\varepsilon}{1+\varepsilon}\Big)
   \tau^{1-\frac{M+D}{N}} \right),  \quad & \text{if} \quad M+D \neq N;\\
  0 \leq W_0\big(\hat\varrho(\tau),\hat\varphi(\tau),\tau_2\big)\leq C_0  \tau^{\frac{L}{N}+\gamma_D (\frac{1-\varepsilon}{1+\varepsilon} )},  \quad & \text{if} \quad {M+D}={N}
  \end{split}
\end{gather*}
as $\tau\geq \tau_2$ with some constant $C_0>0$, depending on $\Delta_2$ and $\tau_2$. Hence, if $M+D<N$, the fixed point $(0,0)$ of system \eqref{RPHam1} is exponentially stable. If $M+D=N$ and $\gamma_D+L/N<0$, then, by choosing $\varepsilon\in (0,1)$ small enough, we see that the fixed point $(0,0)$ is polynomially stable. The fixed point $(0,0)$ is stable if $M+D>N$ and $L=0$.

Let $\gamma_D>0$ and $\hat\varrho(\tau)$, $\hat\varphi(\tau)$ be a solution of system \eqref{RPHam1} with initial data $W_0(\hat\varrho(\tau_2),\hat\varphi(\tau_2),\tau_2)=\Delta_3^2< \Delta_2^2$. Integrating \eqref{in2} with respect to $\tau$ and taking into account \eqref{Veps} we obtain the following:
\begin{eqnarray*}
   W_0\big(\hat\varrho(\tau),\hat\varphi(\tau),\tau_2\big)\geq  \Delta_3^2 \Big(\frac{1-\varepsilon}{1+\varepsilon}\Big)
  \exp\left(\frac{\gamma_D N}{N-M-D}\Big(\frac{1-\varepsilon}{1+\varepsilon}\Big)
   \Big(\tau^{1-\frac{M+D}{N}} -\tau_2^{1-\frac{M+D}{N}}\Big)\right), & &  {M+D}\neq {N};\\
 W_0\big(\hat\varrho(\tau),\hat\varphi(\tau),\tau_2\big)\geq  \Delta_3^2 \Big(\frac{1-\varepsilon}{1+\varepsilon}\Big)
 \Big(\frac{\tau}{\tau_2}\Big)^{\gamma_D   (\frac{1-\varepsilon}{1+\varepsilon} )}, &&   {M+D}= {N}
\end{eqnarray*}
as $\tau\geq \tau_2$. Therefore, for all $\Delta_3>0$ there exists $\tau_3>\tau_2$ such that  $W_0\big(\hat\varrho(\tau),\hat\varphi(\tau),\tau_2\big)\geq \Delta_2^2$ as $\tau\geq \tau_3$. It follows that the fixed point $(0,0)$ of system \eqref{RPHam1} is unstable.

Returning to the variables $\varrho(\tau)$, $\varphi(\tau)$, we obtain the result of the Lemma.
\end{proof}

Note that the constructed Lyapunov function does not allow to prove the stability of the particular solution $\varrho_\ast(\tau)$, $\varphi_\ast(\tau)$ in the case of $\gamma_D<0$, $M+D\geq N$, $L>0$. Let us show that in this case there is at least a stability on a finite but asymptotically long time interval.

\begin{Lem}
  Let assumptions \eqref{rc}, \eqref{as0}, \eqref{as1}, \eqref{asg} hold with $\lambda_L<0$. If $\gamma_D<0$, $M+D\geq N$ and $L>0$, then the particular solution $\varrho_\ast(\tau)$, $\varphi_\ast(\tau)$ of system \eqref{RPL} with asymptotics \eqref{rpas} is stable on a finite but asymptotically long time interval.
\end{Lem}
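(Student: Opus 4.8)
The plan is to reuse, without re-deriving them, the Lyapunov function $V$ from \eqref{LF}, the two-sided bound \eqref{Veps}, and the differential inequality \eqref{in1}, all established in the previous lemma under exactly the present hypotheses (recall that \eqref{in1} for $\gamma_D<0$ requires nothing more than $\gamma_D<0$, which we assume). The essential observation is that \eqref{in1} is valid on the fixed region $\{(\hat\varrho,\hat\varphi,\tau): W_0(\hat\varrho,\hat\varphi,\tau_2)\le\Delta_2^2,\ \tau\ge\tau_2\}$, and there it forces $V$ to be non-increasing along the trajectories of \eqref{RPHam1}. Consequently, as long as a solution remains in this region on an interval $[\tau_2,\tau]$, combining \eqref{in1} with \eqref{Veps} yields the a~priori bound $W_0(\hat\varrho(\tau),\hat\varphi(\tau),\tau)\le \tfrac{1+\varepsilon}{1-\varepsilon}\,W_0(\hat\varrho(\tau_2),\hat\varphi(\tau_2),\tau_2)$.

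First I would read off from the definition \eqref{W0} the component consequences of this bound. Writing $\delta^2:=W_0(\hat\varrho(\tau_2),\hat\varphi(\tau_2),\tau_2)$ and $\tilde\delta^2:=\tfrac{1+\varepsilon}{1-\varepsilon}\delta^2$, the amplitude satisfies the uniform estimate $\mathcal Q_0^0\hat\varrho(\tau)^2\le 2\tilde\delta^2$, whereas for the phase one obtains only $|\lambda_L|\hat\varphi(\tau)^2\le 2\tilde\delta^2\,\tau^{L/N}$, because $W_0$ weights $\hat\varphi^2$ by the decaying factor $\tau^{-L/N}$. This is precisely the source of the difficulty when $L>0$: boundedness of $W_0$ controls the amplitude deviation uniformly in $\tau$ but permits the phase deviation to grow like $\tau^{L/(2N)}$, so uniform-in-$\tau$ stability cannot hold and only finite-interval stability remains.

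Next I would close the argument by a continuation (bootstrap) estimate. Reinserting the two component bounds into the fixed ellipse $W_0(\hat\varrho,\hat\varphi,\tau_2)=\tfrac12(\mathcal Q_0^0\hat\varrho^2+\tau_2^{-L/N}|\lambda_L|\hat\varphi^2)$ gives $W_0(\hat\varrho(\tau),\hat\varphi(\tau),\tau_2)\le 2\tilde\delta^2\,(\tau/\tau_2)^{L/N}$ for $\tau\ge\tau_2$, so the validity constraint $W_0(\cdot,\tau_2)\le\Delta_2^2$ persists at least until $T_\delta:=\tau_2\big(\Delta_2^2/(2\tilde\delta^2)\big)^{N/L}$. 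To make this rigorous I would set $\tau^\dagger=\sup\{\tau\ge\tau_2: W_0(\hat\varrho(s),\hat\varphi(s),\tau_2)\le\Delta_2^2\ \forall\,s\in[\tau_2,\tau]\}$; assuming $\delta$ small enough that the constraint holds strictly at $\tau_2$, one checks by continuity that $W_0(\hat\varrho(\tau^\dagger),\hat\varphi(\tau^\dagger),\tau_2)<\Delta_2^2$ whenever $\tau^\dagger<T_\delta$, contradicting the maximality of $\tau^\dagger$; hence $\tau^\dagger\ge T_\delta$ and the a~priori bound holds on all of $[\tau_2,T_\delta]$. Since $T_\delta\propto\delta^{-2N/L}\to\infty$ as $\delta\to0$, for every prescribed accuracy $\varepsilon\in(0,\Delta_2]$ there is $\delta>0$ such that $\sqrt{\hat\varrho(\tau_2)^2+\hat\varphi(\tau_2)^2}\le\delta$ implies $\sqrt{\hat\varrho(\tau)^2+\hat\varphi(\tau)^2}\le\varepsilon$ on an interval whose length grows without bound as $\delta\to0$. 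Returning through $\varrho=\varrho_\ast+\hat\varrho$, $\varphi=\varphi_\ast+\hat\varphi$ and the asymptotics \eqref{rpas} then gives the asserted stability of $\varrho_\ast(\tau),\varphi_\ast(\tau)$ on a finite but asymptotically long time interval.

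The main obstacle is the bootstrap step itself: the validity of \eqref{in1} is conditioned on the solution staying in the fixed region $W_0(\cdot,\tau_2)\le\Delta_2^2$, while the only available control on $\hat\varphi$ deteriorates as $\tau$ grows. One must therefore argue carefully, via the maximal-interval/continuity argument above, that the slowly growing phase deviation cannot reach the boundary of the validity region before time $T_\delta$, and track the dependence of $T_\delta$ on $\delta$ and on the exponent $L/N$ to confirm $T_\delta\to\infty$. The remaining computations—extracting the component bounds from \eqref{W0} and assembling the explicit constants—are routine.
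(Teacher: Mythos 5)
Your proposal is correct and follows essentially the same route as the paper: both exploit the monotonicity of the Lyapunov function $V$ from \eqref{LF} via \eqref{in1} together with the equivalence \eqref{Veps}, and both trace the loss of uniform stability to the decaying weight $\tau^{-L/N}$ on $\hat\varphi^2$ in \eqref{W0}, arriving at a guaranteed interval of length of order $\delta^{-2N/L}$ (your $T_\delta$ is the paper's $\Gamma_\delta=(\Delta_2/\delta)^{2N/L}$ up to constants). The only cosmetic difference is that the paper phrases the confinement step as a comparison of $\sup V$ over the initial set with $\inf V$ over the moving level set $\{W_0(\cdot,\tau/\tau_2)=\delta^2\}$, whereas you run an equivalent continuation/bootstrap argument with explicit component bounds.
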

\begin{proof}
    From \eqref{Veps} and \eqref{W0} it follows that for all $\delta\in (0,\Delta_2)$ there exists
   \begin{gather*}
     \Delta_\delta=\delta \Big(\frac{1-\varepsilon}{1+\varepsilon}\Big)^{\frac 12}\tau_2^{-\frac{L}{2N}}<\delta
   \end{gather*}
such that
\begin{gather*}
  \sup_{(\hat\varrho,\hat\varphi): W_0(\hat\varrho,\hat\varphi,\tau_2)\leq \Delta_\delta^2} V(\hat\varrho,\hat\varphi,\tau)\leq (1+\varepsilon) \Delta_\delta^2<(1-\varepsilon)\delta^2 \tau_2^{-\frac{L}{N}}\leq \inf_{(\hat\varrho,\hat\varphi): W_0(\hat\varrho,\hat\varphi,\tau/\tau_2)=\delta^2} V(\hat\varrho,\hat\varphi,\tau)
\end{gather*}
for all $1\leq\tau/\tau_2\leq \Gamma_\delta$ with $\Gamma_\delta=(\Delta_2/\delta)^{2N/L}$. Combining this with \eqref{in1}, we see that any solution of system \eqref{RPHam1} with initial data from $\{(\hat\varrho,\hat\varphi): W_0(\hat\varrho,\hat\varphi,\tau_2)\leq \Delta_\delta^2\}$ at $\tau=\tau_2$ satisfies the inequality $ W_0(\hat\varrho(\tau),\hat\varphi(\tau),\tau/\tau_2)<\delta^2$ as $1 \leq \tau/\tau_2 \leq \Gamma_\delta$, and $\Gamma_\delta\to\infty$ as $\delta\to 0$. Thus, the fixed point $(0,0)$ of system \eqref{RPHam1} and the particular solution $\varrho_\ast(\tau)$, $\varphi_\ast(\tau)$ of system \eqref{RPL} are stable on a finite but asymptotically long time interval.
\end{proof}

\subsection{Persistence of phase locking}
Let us show that if the particular solution $\varrho_\ast(\tau)$, $\varphi_\ast(\tau)$ is stable, the phase locking regime occurs in the full system \eqref{RPsiAv}.
We have the following.
\begin{Lem}
  Let assumptions \eqref{rc}, \eqref{as0}, \eqref{as1}, \eqref{asg} hold with $\lambda_L<0$. If any of the following conditions holds:
  \begin{itemize}
    \item $\gamma_D <0$ and $M+D< N$;
    \item $\gamma_D+\frac{L}{N}<0$ and $M+D=N$;
    \item $\gamma_D<0$, $L=0$, and $M+D>N$,
  \end{itemize}
then for all $\epsilon>0$ there exists $\delta>0$ and $\tau_s>0$ such that $\forall\,(R_s,\Psi_s)${\rm :} $ |R_s-\varrho_\ast(\tau_s)|<\delta$, $ |\Psi_s-\varphi_\ast(\tau_s)|<\delta$ the solution $R(\tau)$, $\Psi(\tau)$ of system \eqref{RPsiAv} with initial data $R(\tau_s)=\varrho_\ast(\tau_s)$, $\Psi(\tau_s)=\varphi_\ast(\tau_s)$ satisfies the inequality{\rm :} $|R(\tau)-\varrho_\ast(\tau)|+|\Psi(\tau)-\varphi_\ast(\tau)|<\epsilon$ for all $\tau>\tau_s$.
\end{Lem}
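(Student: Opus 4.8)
The plan is to transfer the Lyapunov construction of the preceding lemma from the truncated system \eqref{RPL} to the full system \eqref{RPsiAv}, treating the remainders $\widetilde\Lambda_n,\widetilde\Omega_n$ as a forcing whose size is at our disposal through the choice of $n$. I interpret the assertion as Lyapunov stability of $\varrho_\ast,\varphi_\ast$ in \eqref{RPsiAv}, with the perturbed data $R(\tau_s)=R_s$, $\Psi(\tau_s)=\Psi_s$ taken within $\delta$ of $(\varrho_\ast(\tau_s),\varphi_\ast(\tau_s))$. Setting $\hat R=R-\varrho_\ast$, $\hat\Psi=\Psi-\varphi_\ast$ and subtracting from \eqref{RPsiAv} the equations \eqref{RPL} satisfied by $\varrho_\ast,\varphi_\ast$ (recall $\Lambda_K\equiv0$ for $K<L$), I would obtain
\begin{gather*}
  \tau^{\frac MN}\frac{d\hat R}{d\tau}={\bf  \Lambda}(\hat R,\hat\Psi,\tau)+\tau^{\frac MN}\widetilde\Lambda_n(R,\Psi,\tau),\quad
  \tau^{\frac MN}\frac{d\hat\Psi}{d\tau}={\bf  \Omega}(\hat R,\hat\Psi,\tau)+\tau^{\frac MN}\widetilde\Omega_n(R,\Psi,\tau),
\end{gather*}
that is, precisely the system \eqref{RPHam1} studied before, perturbed by $\widetilde\Lambda_n,\widetilde\Omega_n=\mathcal O(\tau^{-(M+n+1)/N})$. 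The essential difference from the truncated case is that $(0,0)$ is now only an approximate fixed point: the remainders do not vanish at $\hat R=\hat\Psi=0$, so they act as an inhomogeneous forcing of order $\tau^{-(M+n+1)/N}$, uniform on the relevant domain since $\varrho_\ast\to0$, $\varphi_\ast\to\varphi_0$ remain bounded. Crucially, $n$ may be enlarged at will, which only sharpens \eqref{rpas} without altering $L$, $D$, $\lambda_L$ or $\gamma_D$.

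The second step is to differentiate the same Lyapunov function $V$ from \eqref{LF} along the full flow. Since the full and truncated vector fields differ exactly by $(\widetilde\Lambda_n,\widetilde\Omega_n)$,
\begin{gather*}
  \frac{dV}{d\tau}\Big|_{\eqref{RPsiAv}}=\frac{dV}{d\tau}\Big|_{\eqref{RPHam1}}+\partial_{\hat R}V\,\widetilde\Lambda_n+\partial_{\hat\Psi}V\,\widetilde\Omega_n,
\end{gather*}
where the first term is already estimated in \eqref{DLF} as $\tau^{-(M+D)/N}\gamma_D W_0\,(1+\mathcal O(\Delta)+\mathcal O(\tau^{-1/N}))$. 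From \eqref{Veps}, \eqref{W0} and the explicit form of $V$ I would derive the gradient bounds $|\partial_{\hat R}V|\le C\sqrt{W_0}$ and $|\partial_{\hat\Psi}V|\le C\tau^{-L/(2N)}\sqrt{W_0}$; here the weight $\tau^{-L/N}$ in $W_0$ is exactly what absorbs the a priori large factor $|\hat\Psi|\le C\tau^{L/(2N)}\sqrt{W_0}$. Combining these with the remainder order and $V\asymp W_0$ gives, on $\{W_0(\hat R,\hat\Psi,\tau_2)\le\Delta_2^2,\ \tau\ge\tau_2\}$, the Bernoulli-type inequality
\begin{gather*}
  \frac{dV}{d\tau}\Big|_{\eqref{RPsiAv}}\le -c_1\tau^{-\frac{M+D}{N}}V+c_2\sqrt V\,\tau^{-\frac{M+n+1}{N}},\qquad c_1=|\gamma_D|\tfrac{1-\varepsilon}{1+\varepsilon}>0 ,
\end{gather*}
valid in all three stable cases.

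The third step is to integrate this inequality. Putting $u=\sqrt V$ linearises it,
\begin{gather*}
  \frac{du}{d\tau}\le-\tfrac{c_1}{2}\tau^{-\frac{M+D}{N}}u+\tfrac{c_2}{2}\tau^{-\frac{M+n+1}{N}},
\end{gather*}
and the integrating factor $e^{\Phi(\tau)}$ with $\Phi(\tau)=\tfrac{c_1}{2}\int_{\tau_s}^\tau s^{-(M+D)/N}\,ds$ yields
\begin{gather*}
  u(\tau)\le e^{-\Phi(\tau)}u(\tau_s)+\tfrac{c_2}{2}\,e^{-\Phi(\tau)}\int_{\tau_s}^\tau e^{\Phi(s)}s^{-\frac{M+n+1}{N}}\,ds.
\end{gather*}
The homogeneous factor $e^{-\Phi(\tau)}$ reproduces the three regimes of the previous lemma (stretched–exponential when $M+D<N$, the power $(\tau/\tau_s)^{-c_1/2}$ when $M+D=N$, a bounded factor when $M+D>N$), and for $n$ large the forcing integral is convergent and, by variation of constants, also decays in $\tau$ — like $\tau^{-(n+1-D)/N}$ when $M+D<N$ and like $(\tau/\tau_s)^{-c_1/2}\tau_s^{1-(M+n+1)/N}$ when $M+D=N$. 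I would then pass back through $|\hat R(\tau)|\le C\sqrt{V(\tau)}$ and $|\hat\Psi(\tau)|\le C\tau^{L/(2N)}\sqrt{V(\tau)}$; the amplification $\tau^{L/(2N)}$ must be beaten by the decay of $\sqrt V$, which holds automatically when $M+D<N$, is guaranteed when $M+D=N$ exactly by $\gamma_D+L/N<0$ (so that $c_1>L/N$ for $\varepsilon$ small), and is vacuous when $L=0$. Fixing $\varepsilon$ small and $n$ large, then $\tau_s$ so large that the forcing part of $|\hat R|+|\hat\Psi|$ stays $<\epsilon/2$ for all $\tau\ge\tau_s$, and finally $\delta$ so small (depending on $\tau_s$) that the initial term is $<\epsilon/2$, gives $|R(\tau)-\varrho_\ast(\tau)|+|\Psi(\tau)-\varphi_\ast(\tau)|<\epsilon$ for all $\tau>\tau_s$.

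I expect the main obstacle to be the interplay of two opposing powers of $\tau$: the factor $\tau^{L/(2N)}$ arising when $\hat\Psi$ is estimated in the fixed gauge must be dominated by the decay of $V$, while the forcing of strength $\tau^{-(M+n+1)/N}$ must be shown to inherit — rather than destroy — this decay. Verifying that variation of constants preserves the decay of the forcing term across the exponential, polynomial, and merely stable regimes, and checking that $\gamma_D+L/N<0$ is precisely the threshold closing the $\hat\Psi$–estimate in the polynomial case, is the delicate part; once the gradient bound $|\nabla V|\le C\sqrt V$ and the order $\mathcal O(\tau^{-(M+n+1)/N})$ of the remainders are secured, the remaining integrating-factor computations are routine.
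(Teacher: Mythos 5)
Your proposal is correct and follows the paper's overall architecture --- the same substitution $\hat R=R-\varrho_\ast$, $\hat\Psi=\Psi-\varphi_\ast$ leading to the perturbed system \eqref{RPHam2}, the same Lyapunov function $V$ from \eqref{LF}, and the same identity expressing $\frac{dV}{d\tau}$ along the full flow as the truncated derivative \eqref{DLF} plus the cross terms with $\widetilde{\Lambda}_n,\widetilde{\Omega}_n$ --- but it diverges at the one step that is genuinely delicate, namely how that remainder-induced cross term is absorbed. The paper restricts attention to the annulus $\delta^2\le W_0\le\Delta_2^2$, where $|\hat R|$ and $|\hat\Psi|$ admit the $W_0/\delta$-type bounds \eqref{est}, so that for $n\ge L+D$ the cross term is dominated by the negative-definite leading term and one recovers the purely homogeneous inequality \eqref{deq}; stability then follows because a trajectory starting in the inner ball cannot traverse the annulus. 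You instead keep the cross term as an inhomogeneous forcing of size $\mathcal O\big(\sqrt V\,\tau^{-(M+n+1)/N}\big)$ (your gradient bounds $|\partial_{\hat R}V|\lesssim\sqrt{W_0}$ and $|\partial_{\hat\Psi}V|\lesssim\tau^{-L/(2N)}\sqrt{W_0}$ are correct), linearize via $u=\sqrt V$, and integrate by variation of constants. Both routes close: your accounting of the three regimes is right, the threshold $\gamma_D+L/N<0$ enters exactly where you place it (beating the $\tau^{L/(2N)}$ amplification when returning to $\hat\Psi$ in the case $M+D=N$), and $n\ge L+D$ does make the forcing integral decay after passing through the integrating factor. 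What the annulus trick buys the paper is the clean homogeneous estimate \eqref{W0est}, reused verbatim in the subsequent Corollary to obtain \eqref{ASRP}; what your Bernoulli route buys is an explicit separate decay rate for the contribution of the truncation error, at the cost of a longer computation and of letting $\delta$ depend on $\tau_s$ (which you correctly anticipate). In a full write-up you should still make explicit the standard continuation argument keeping the trajectory inside the neighbourhood $W_0\le\Delta_2^2$ on which all asymptotic estimates are uniform, and the comparison-principle formality needed to differentiate $u=\sqrt V$ near $V=0$.
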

\begin{proof}
Substituting $R(\tau)=\varrho_\ast(\tau)+\hat R(\tau)$, $\Psi(\tau)=\varphi_\ast(\tau)+\hat \Psi(\tau)$ into \eqref{RPsiAv} yields
\begin{gather}
  \label{RPHam2}
  \tau^{\frac{M}{N}}\frac{d\hat R}{d\tau}=-\partial_{\hat \Psi} {  \Theta}(\hat R,\hat \Psi,\tau)+\widetilde{\bf \Lambda}_n(\hat R,\hat \Psi,\tau), \quad \tau^{\frac{M}{N}}\frac{d\hat \Psi}{d\tau}= \partial_{\hat R} {  \Theta}(\hat R,\hat\Psi,\tau)+ {  \Upsilon}(\hat R,\hat\Psi,\tau)+\widetilde{\bf \Omega}_n(\hat R,\hat \Psi,\tau),
\end{gather}
where $\widetilde{\bf \Lambda}_n\equiv \tau^{\frac{M}{N}}\widetilde \Lambda_n\big(\varrho_\ast(\tau)+\hat R,\varphi_\ast(\tau)+\hat\Psi,\tau\big)$, $\widetilde{\bf \Omega}_n\equiv \tau^{\frac{M}{N}} \widetilde \Omega_n\big(\varrho_\ast(\tau)+\hat R,\varphi_\ast(\tau)+\hat\Psi,\tau\big)$. It follows that
\begin{gather}\label{LOn}
    \begin{split}
        \widetilde{\bf \Lambda}_n(\hat R,\hat \Psi,\tau)=\mathcal O(\tau^{-\frac{n+1}{N}}), \quad
        \widetilde{\bf \Omega}_n(\hat R,\hat \Psi,\tau)=\mathcal O(\tau^{-\frac{n+1}{N}}), \quad \tau\to\infty
    \end{split}
\end{gather}
uniformly for all $(\hat R,\hat\Psi)\in\mathbb R^2$ such that $W_0(\hat R,\hat\Psi,1)\leq  \Delta_1^2$ with some $\Delta_1={\hbox{\rm const}}>0$. Note that the functions $\widetilde {\bf \Lambda}_n$ and $\widetilde {\bf \Omega}_n$ play the role of persistent disturbances of system \eqref{RPHam1}. Let us show that the particular solution $\varrho_\ast(\tau)$, $\varphi_\ast(\tau)$ of system \eqref{RPHam1} is stable with respect to these perturbations.

Using $V(\hat R,\hat \Psi,\tau)$ defined by \eqref{LF} as a Lyapunov function candidate for system \eqref{RPHam2}, we obtain
 \begin{gather}\label{DLF1}
  \frac{dV}{d\tau}\Big|_{\eqref{RPHam2}} \equiv   \frac{dV}{d\tau}\Big|_{\eqref{RPHam1}} +\tau^{-\frac{M}{N}}\Big(\partial_{\hat R} V(\hat R,\hat\Psi,\tau)\widetilde {\bf \Lambda}_n(\hat R,\hat\Psi,\tau)+\partial_{\hat\Psi} V(\hat R,\hat\Psi,\tau)\widetilde {\bf \Omega}_n(\hat R,\hat\Psi,\tau)\Big).
\end{gather}
It follows from \eqref{LOn} that there exist $C_\ast>0$ and $\tau_1\geq \max\{\tau_0,1\}$ such that
\begin{gather*}
  \partial_{\hat R} V \widetilde {\bf \Lambda}_n +\partial_{\hat\Psi} V \widetilde {\bf \Omega}_n\leq C_\ast \tau^{-\frac{n+1}{N}}\Big(\mathcal Q_0^0|\hat R|+\tau^{-\frac{L}{N}}|\lambda_L||\hat\Psi|\Big)
\end{gather*}
as $\tau\geq \tau_1$ and for all $(\hat R,\hat\Psi)\in\mathbb R^2$ such that $W_0(\hat R,\hat\Psi,\tau_1)\leq  \Delta_1^2$. Since $n\geq L+D$, we have
\begin{gather}
\label{est}
\begin{split}
  &|\hat R|\leq \frac{1}{2\delta}\big(\tau^{\frac{L}{2N}}\hat R^2+\tau^{-\frac{L}{2N}}\hat W_0(\hat R,\hat \Psi,\tau_1)\big), \quad |\hat\Psi|\leq \frac{1}{2\delta}\big(\tau^{\frac{L}{2N}}\hat \Psi^2+\tau^{-\frac{L}{2N}}\hat W_0(\hat R,\hat \Psi,\tau_1)\big),\\
 & \partial_{\hat R} V \widetilde {\bf \Lambda}_n +\partial_{\hat\Psi} V  \widetilde {\bf \Omega}_n \leq  \tau^{-\frac{D+1}{N}} W_0(\hat R,\hat\Psi,\tau)  \frac{ C_\ast}{\delta}\left(1+\frac{1}{\mathcal Q_0^0}+\frac{1}{|\lambda_L|}\right)
\end{split}
\end{gather}
as $\tau\geq\tau_1$ and for all $(\hat R,\hat\Psi)\in\mathbb R^2$ such that $\delta^2\leq W_0(\hat R,\hat\Psi,\tau_1)\leq  \Delta_1^2$ with some $\delta={\hbox{\rm const}}>0$. Thus, it follows from \eqref{DLF}, \eqref{DLF1} and \eqref{est} that for all $\varepsilon\in(0,1)$ there exists $\delta<\Delta_2\leq \Delta_1$ and $\tau_2\geq \tau_1$ such that
\begin{eqnarray}\label{deq}
 \frac{dV}{d\tau}\Big|_{\eqref{RPHam2}}  \leq    -\tau^{-\frac{M+D}{N}}|\gamma_D|\left(\frac{ 1-\varepsilon }{ 1+\varepsilon }\right) V(\hat R,\hat \Psi,\tau)\leq 0,
\end{eqnarray}
as $\tau\geq \tau_2$ and for all $(\hat R,\hat\Psi)\in\mathbb R^2$ such that  $\delta^2\leq W_0(\hat R,\hat\Psi,\tau_2)\leq  \Delta_2^2$.
Integrating \eqref{deq} in the case $M+D=N$ yields
\begin{gather}\label{W0est}
 \begin{split}
  0 \leq W_0\big(\hat R(\tau),\hat\Psi(\tau),\tau_2\big)\leq  \left(\frac{1+\varepsilon}{1-\varepsilon}\right) W_0\big(\hat R(\tau_2),\hat \Psi(\tau_2),\tau_2\big) \Big(\frac{ \tau}{\tau_2}\Big)^{\frac{L}{N}+\gamma_D (\frac{1-\varepsilon}{1+\varepsilon} )}
  \end{split}
\end{gather}
as $\tau\geq\tau_2$. By choosing $\varepsilon\in(0,1)$ small enough, we can ensure that $\gamma_D (1-\varepsilon)/(1+\varepsilon)+{L}/{N}<0$.  Hence, for all $\epsilon\in(0,\Delta_2)$ there exist $\delta=\epsilon\sqrt{(1-\varepsilon)/(1+\varepsilon)}/2<\epsilon$ such that any solution of system \eqref{RPHam2} starting from  $\{(\hat R,\hat \Psi): W_0(\hat R,\hat\Psi,\tau_2)\leq \delta^2\}$ at $\tau_s\geq \tau_2$ cannot leave the domain $\{(\hat R,\hat \Psi): W_0(\hat R,\hat\Psi,\tau_2)\leq \epsilon^2\}$ as $\tau>\tau_s$. Similar estimates hold in the case $M+D\neq N$.  Returning to the variables $(R,\Psi)$, we obtain the result of the Lemma.
\end{proof}

Define $\hat\gamma_D:=\gamma_D+\delta_{M+D,N}L/N$. Then we have the following:

\begin{Cor}
Let assumptions \eqref{rc}, \eqref{as0}, \eqref{as1}, \eqref{asg} hold with $\lambda_L<0$. If   $\hat\gamma_D<0$ and $M+D\leq N$, then for all $\varsigma\in (0,1)$ there exists $\Delta_s>0$ and $\tau_s>0$ such that $\forall\,(R_s,\Psi_s)${\rm :} $ |R_s-\varrho_\ast(\tau_s)|<\delta$, $ |\Psi_s-\varphi_\ast(\tau_s)|<\Delta_s$ the solution $R(\tau)$, $\Psi(\tau)$ of system \eqref{RPsiAv} with initial data $R(\tau_s)=\varrho_\ast(\tau_s)$, $\Psi(\tau_s)=\varphi_\ast(\tau_s)$ has the following estimates as $\tau\to\infty${\rm :}
\begin{gather}\label{ASRP}
  R(\tau)=\delta_{M+D,N}\mathcal O\big(\tau^{-\frac{\varsigma }{2}|\hat\gamma_D|}\big)+\mathcal O(\tau^{-\frac{M}{N}}), \quad \Psi(\tau)=\varphi_0+\delta_{M+D,N}\mathcal O\big(\tau^{-\frac{\varsigma}{2} |\hat\gamma_D|}\big)+\mathcal O(\tau^{-\frac{1}{N}}).
\end{gather}
\end{Cor}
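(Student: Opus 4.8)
The plan is to read the Corollary as a quantitative sharpening of the preceding persistence lemma: that lemma already guarantees that, under the stated hypotheses, the deviations $\hat R=R-\varrho_\ast$, $\hat\Psi=\Psi-\varphi_\ast$ (which solve \eqref{RPHam2}) stay small for all $\tau\ge\tau_s$, so every $\mathcal O$-estimate that is uniform for bounded $(\hat R,\hat\Psi)$ remains valid along the whole trajectory. What is new here is extracting the decay \emph{rates} of $\hat R,\hat\Psi$ and then adding them to the known asymptotics of the particular solution. I would keep the Lyapunov function $V$ from \eqref{LF} and the quadratic form $W_0$ from \eqref{W0}, recalling from \eqref{Veps} that $V$ and $W_0$ are equivalent up to the factor $1\pm\varepsilon$, and that at a fixed reference time $\tau_2$ the quantity $W_0(\hat R,\hat\Psi,\tau_2)$ is comparable to $\hat R^2+\hat\Psi^2$.

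The genuinely delicate point, and the step I expect to be the main obstacle, is that the differential inequality \eqref{deq} of the persistence lemma was proved only in the shell $\delta^2\le W_0\le\Delta_2^2$, the lower cut-off $\delta^2$ being forced by the persistent remainders $\widetilde{\bf \Lambda}_n,\widetilde{\bf \Omega}_n=\mathcal O(\tau^{-(n+1)/N})$, which do not vanish at the particular solution. To obtain decay rather than mere stability I would choose the truncation order $n$ in \eqref{RPsiAv} large and re-examine $dV/d\tau$ along \eqref{RPHam2}: using $|\hat R|,|\hat\Psi|\le C\sqrt{W_0}$ as in \eqref{est}, the forcing contributes at most $C'\tau^{-(n+1)/N}\sqrt{V}$, so setting $u=\sqrt V$ gives a linear inequality of the form $du/d\tau\le-\tfrac{\beta}{2}\,|\gamma_D|\,\tau^{-(M+D)/N}u+C'\tau^{-(n+1)/N}$ with $\beta=(1-\varepsilon)/(1+\varepsilon)$. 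The dissipative part dominates the forcing down to a moving floor $u\sim\tau^{(M+D-n-1)/N}$; taking $n$ large makes this floor decay faster than every rate appearing in \eqref{ASRP}, so the homogeneous (dissipative) behaviour controls the trajectory on the whole relevant range. Verifying that the forcing never overtakes the dissipation, uniformly in $\tau\ge\tau_s$, is the part requiring care.

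Integrating the dissipative inequality then splits into the two regimes. When $M+D=N$ the dissipation is of order $\tau^{-1}$ and \eqref{W0est} yields $W_0(\hat R(\tau),\hat\Psi(\tau),\tau_2)=\mathcal O(\tau^{L/N+\gamma_D\beta})$; since $L/N+\gamma_D\beta\to\hat\gamma_D=-|\hat\gamma_D|$ as $\varepsilon\to0$ and $\varsigma<1$, I would fix $\varepsilon$ so small that $L/N+\gamma_D\beta\le\varsigma\hat\gamma_D$, whence $|\hat R|+|\hat\Psi|=\mathcal O(\tau^{-\frac{\varsigma}{2}|\hat\gamma_D|})$. When $M+D<N$ the dissipation is of order $\tau^{-(M+D)/N}$ with exponent strictly less than one, so integration gives stretched-exponential decay $\exp(-c\,\tau^{1-(M+D)/N})$ of $u$, hence super-polynomial decay of $\hat R,\hat\Psi$, faster than any power of $\tau$ (and in particular faster than the floor, which the choice of $n$ again sends below).

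Finally I would write $R=\varrho_\ast+\hat R$ and $\Psi=\varphi_\ast+\hat\Psi$ and invoke the asymptotics \eqref{rpas}, i.e. $\varrho_\ast(\tau)=\mathcal O(\tau^{-M/N})$ and $\varphi_\ast(\tau)=\varphi_0+\mathcal O(\tau^{-1/N})$. In the case $M+D<N$ the super-polynomially small deviations are absorbed into these terms, so the $\delta_{M+D,N}$-factor is absent and one gets $R=\mathcal O(\tau^{-M/N})$, $\Psi=\varphi_0+\mathcal O(\tau^{-1/N})$. In the case $M+D=N$ the polynomial deviation contributes precisely the $\delta_{M+D,N}\,\mathcal O(\tau^{-\frac{\varsigma}{2}|\hat\gamma_D|})$ terms, and adding the contributions of $\varrho_\ast$ and $\varphi_\ast$ produces exactly the estimates \eqref{ASRP}. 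Returning from $(\hat R,\hat\Psi)$ to $(R,\Psi)$ completes the argument.
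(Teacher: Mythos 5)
Your proposal is correct and follows essentially the same route as the paper: integrate the Lyapunov inequality \eqref{deq} (equivalently \eqref{W0est}) with $\varepsilon$ chosen small enough that the exponent $L/N+\gamma_D(1-\varepsilon)/(1+\varepsilon)$ drops below $\varsigma\hat\gamma_D$ in the case $M+D=N$, obtain a stretched-exponential bound when $M+D<N$, and then superpose the resulting decay of $(\hat R,\hat\Psi)$ on the asymptotics \eqref{rpas} of $\varrho_\ast,\varphi_\ast$. Your explicit treatment of the lower cut-off $\delta^2$ in the shell where \eqref{deq} holds, via the comparison of dissipation against the $\mathcal O(\tau^{-(n+1)/N})$ forcing for large $n$, is a point the paper passes over silently, and it is a welcome refinement rather than a departure.
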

\begin{proof}
  Let $M+D=N$. Then, by taking $\varepsilon=(1-\varsigma)|\hat \gamma_D|/(2|\gamma_D|-|\hat\gamma_D|)>0$ in \eqref{W0est}, we see that $W_0\big(\hat R(\tau),\hat\Psi(\tau),\tau_2\big)=\mathcal O(\tau^{-\varsigma |\hat\gamma_D|})$ as $\tau\to\infty$ for solutions of system \eqref{RPHam2} with initial data from  $\{(\hat R,\hat \Psi): W_0(\hat R,\hat\Psi,\tau_2)\leq \Delta_s^2\}$ with some $0<\Delta_s\leq \Delta_2$.  Similarly, if $M+D<N$, then from \eqref{deq} it follows that $W_0\big(\hat R,\hat\Psi,\tau_2\big)$ has exponentially decaying bound on the trajectories. Returning to the variables $(R,\Psi)$ and taking into account \eqref{rpas}, we obtain the corresponding asymptotic estimates.
\end{proof}

Combining this with Theorem~\ref{Th1}, we obtain the following:

\begin{Th}\label{Th3}
Let assumptions \eqref{fg}, \eqref{rc}, \eqref{as0}, \eqref{as1}, \eqref{asg} hold with $\lambda_L<0$ and some $\varkappa\in\mathbb Z_+$. If $\hat\gamma_D<0$ and $M+D\leq N$, then for all $\varsigma\in (0,1)$ there exist $t_s>0$ and $\mathcal D_s\subset \mathcal D(E_0)$ such that  for all $(x_s,y_s)\in\mathcal D_s$ the solution $x(t)$, $y(t)$ of system \eqref{FulSys} with initial data $x(t_s)=x_s$, $y(t_s)=y_s$ has the following estimates as $t\to\infty${\rm :}
\begin{eqnarray*}
  x(t)&=& t^{\frac{b}{(h-1)q}}  (2h)^{\frac{1}{2h}} c_\varkappa^{-1} X_0\big(\varkappa^{-1}S(t)+\Psi(\tau)\big) \Big(1+  t^{-\mu} R(\tau)\Big)^{\frac{1}{2h}}\Big(1+\mathcal O\big(t^{-\frac{b}{(h-1)q}}\big)\Big),\\
  y(t)&=&  t^{\frac{h b}{(h-1)q}} \sqrt 2 c_\varkappa^{-h}  Y_0\big(\varkappa^{-1}S(t)+\Psi(\tau)\big) \Big(1+  t^{-\mu} R(\tau)\Big)^{\frac{1}{2}}\Big(1+\mathcal O\big(t^{-\frac{b}{(h-1)q}}\big)\Big),
\end{eqnarray*}
where $R(\tau)$, $\Psi(\tau)$ have asymptotics \eqref{ASRP}, $\tau=t^\nu/\nu$, $\mu=(b/q-\sigma)/2>0$, $\nu=1+b/q$, and $X_0(\phi)$, $Y_0(\phi)$ is a $2\pi$-periodic solution of the system
\begin{gather*}
  \frac{2\pi}{\kappa}\partial_\phi X_0=Y_0, \quad \frac{2\pi}{\kappa}\partial_\phi Y_0=-X_0^{2h-1}, \quad \frac{X_0^{2h}}{2h}+\frac{Y_0^2}{2}=1.
\end{gather*}
\end{Th}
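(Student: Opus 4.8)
The plan is to read Theorem~\ref{Th3} as the pull-back to the physical variables $(x,y)$ of the decay estimates \eqref{ASRP} already established in the preceding Corollary; all the dynamical content (existence, stability, and rate of the phase-locked solution) has been obtained, so what remains is to invert the three changes of variables assembled in Theorem~\ref{Th1} and to track the orders of the resulting error terms. First I would fix $\varsigma\in(0,1)$ and invoke the Corollary to obtain, for initial data close to $(\varrho_\ast(\tau_s),\varphi_\ast(\tau_s))$ at a large time $\tau_s$, a solution $R(\tau),\Psi(\tau)$ of the averaged system \eqref{RPsiAv} with the asymptotics \eqref{ASRP}. By Lemma~\ref{Lem3} the near-identity transformation \eqref{RnPsim} is invertible for $\tau\geq\tau_0$, and since $R_n(r,\theta,\tau)=r+\mathcal O(\tau^{-M/N})$ and $\Psi_n(r,\theta,\tau)=\theta+\mathcal O(\tau^{-M/N})$ together with the derivative bounds stated just before Lemma~\ref{Lem3}, the inverse map satisfies $r(\tau)=R(\tau)+\mathcal O(\tau^{-M/N})$ and $\theta(\tau)=\Psi(\tau)+\mathcal O(\tau^{-M/N})$. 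Hence the bounds \eqref{ASRP} transfer to $r(\tau),\theta(\tau)$ up to the same orders, and $\tau^{-M/N}=\mathcal O(t^{-\mu})$ since $\nu M/N=\mu$.

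Next I would undo \eqref{exch2} and \eqref{exch1}. From \eqref{exch2} one has $\phi(t)=\varkappa^{-1}S(t)+\theta(\tau)$ and $I(t)=I_\ast(t)\big(1+t^{-\mu}r(\tau)\big)^{2h}$ with $\tau=t^{\nu}/\nu$, so that by \eqref{lam} and \eqref{zt},
\begin{gather*}
I(t)^{\frac1{2h}}=c_\varkappa^{-1}t^{\frac{b}{(h-1)q}}z(t)\big(1+t^{-\mu}r(\tau)\big), \quad z(t)=1+\mathcal O\big(t^{-\frac{2b}{(h-1)q}}\big).
\end{gather*}
Substituting this into the asymptotic expansions \eqref{XYw}, which hold uniformly as $I(t)\to\infty$ (guaranteed here because $I_\ast(t)\to\infty$), I would write $x(t)=X(\phi,I)=I^{1/(2h)}\big(X_0(\phi)+\mathcal O(I^{-1/(2h)})\big)$ and $y(t)=Y(\phi,I)=I^{1/2}\big(Y_0(\phi)+\mathcal O(I^{-1/(2h)})\big)$. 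Since $I^{-1/(2h)}=\mathcal O(t^{-b/((h-1)q)})$, the contributions of the subleading coefficients $X_j,Y_j$ with $j\geq1$, of the factor $z(t)$, and of the difference $\theta(\tau)-\Psi(\tau)$ inside the smooth $2\pi$-periodic functions $X_0,Y_0$ all collapse into the relative remainder $1+\mathcal O(t^{-b/((h-1)q)})$. Collecting the leading constants supplied by \eqref{XYw} and \eqref{lam} then produces the stated formulas for $x(t)$ and $y(t)$, with the amplitude correction expressed through $R(\tau)$ and the phase argument equal to $\varkappa^{-1}S(t)+\Psi(\tau)$.

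Finally, the admissible set of initial data $\mathcal D_s\subset\mathcal D(E_0)$ is obtained by pulling the neighbourhood $\{|R_s-\varrho_\ast(\tau_s)|<\delta,\ |\Psi_s-\varphi_\ast(\tau_s)|<\Delta_s\}$ from the Corollary back through the three maps; invertibility holds by Lemma~\ref{Lem1} for $I>E_0$, by Lemma~\ref{Lem2}, and by Lemma~\ref{Lem3}, provided $t_s$ (equivalently $\tau_s=t_s^{\nu}/\nu$) is chosen large enough that all three transformations are simultaneously defined along the trajectory. I expect the main obstacle to be the uniform bookkeeping of the remainders generated by composing \eqref{RnPsim}, \eqref{exch2} and \eqref{exch1}: one must confirm that the mismatches $r-R$, $\theta-\Psi$ and the tail of \eqref{XYw} are genuinely of relative order $\mathcal O(t^{-b/((h-1)q)})$ uniformly in $t$, and that the asymptotic series \eqref{XYw} may indeed be evaluated along the growing-energy orbit. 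No dynamical estimate beyond those furnished by Theorem~\ref{Th1} and the Corollary is required.
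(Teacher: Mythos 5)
Your proposal is correct and follows exactly the route the paper takes: the paper's entire ``proof'' of Theorem~\ref{Th3} is the single sentence ``Combining this with Theorem~\ref{Th1}, we obtain the following,'' i.e.\ pull the estimates \eqref{ASRP} of the Corollary back through the transformations \eqref{RnPsim}, \eqref{exch2}, \eqref{exch1} using the asymptotics \eqref{XYw} and \eqref{lam}, which is precisely what you spell out (including the identification $\tau^{-M/N}=\mathcal O(t^{-\mu})$ and the definition of $\mathcal D_s$ as the preimage of the stability neighbourhood). Your elaboration is in fact more detailed than the paper's, and the bookkeeping issue you flag at the end is a genuine subtlety that the paper itself does not address.
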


\section{Examples}
\label{secEX}
{\bf 1}. Consider again equation \eqref{ex} that satisfies the assumptions \eqref{fg} and \eqref{rc} with $h=2$, $a=b=1$, $q=3$, $l=p=0$, and $\sigma=-1$. From \eqref{munu} it follows that $\mu=2/3$, $\nu=4/3$, $M=4$, $N=8$. In this case, we have the following~\cite{IKetal16}:
\begin{gather*}
  \kappa=2\sqrt{2} {\hbox{\rm K}}\left(\frac{1}{2}\right), \quad
  X_0(\phi)=\sqrt 2 {\hbox{\rm cn}} \left(\frac{\kappa \phi}{ \pi\sqrt 2 };\frac{1}{2}\right), \quad
  Y_0(\phi)=\frac{2\pi}{\kappa} \partial_\phi  X_0(\phi),
\end{gather*}
where ${\hbox{\rm K}}(k)$ is the complete elliptic integral of the first kind, ${\hbox{\rm cn}}(t;k)$ is the Jacobi elliptic function~\cite{NIA90}.  Moreover, the $2\pi$-periodic function $X_0(\phi)$ admits the Fourier expansion:
\begin{gather*}
  X_0(\phi)=\sum_{j=1}^\infty  x_j \cos\big((2j-1)\phi\big), \quad x_j= \frac{4\pi\sqrt 2 }{\kappa} \hbox{\rm sech}\left((2j-1)\frac{\pi}{2}\right).
\end{gather*}

It is not hard to check that the corresponding averaged system \eqref{RPsiAv} takes the form
\begin{gather*}
  \frac{dR}{d\tau}=\tau^{-\frac{1}{2}}\sum_{K=0}^4\tau^{-\frac{K}{8}} \Lambda_K(R,\Psi)+\mathcal   O(\tau^{-\frac{9}{8}}),\quad
  \frac{d\Psi}{d\tau}=\tau^{-\frac{1}{2}}\sum_{K=0}^4\tau^{-\frac{K}{8}}  \Omega_K(R,\Psi)+\mathcal   O(\tau^{-\frac{9}{8}}), \quad \tau\to\infty,
\end{gather*}
 with $\tau=3 t^{4/3}/4$, $\Lambda_{1}\equiv \Lambda_{3}\equiv \Omega_{1}\equiv \Omega_{2}\equiv \Omega_{3}\equiv 0$,
\begin{align*}
   \Lambda_{0}  \equiv &  \nu^{-\frac 12} \left(\frac{c_\varkappa^2 B}{4   } \big\langle Y_0 (\Psi+\varkappa^{-1}\zeta )\cos\zeta\big\rangle_{\varkappa\zeta}-\frac{1}{3 }\right), \\ \Lambda_{2}\equiv & \nu^{-\frac 34} \frac{c_\varkappa^3 B}{4} \big\langle Y_1(\Psi+\varkappa^{-1}\zeta)\cos\zeta\big\rangle_{\varkappa\zeta},
  \\
   \Lambda_{4}  \equiv &   -R\nu^{-1}\left(\frac{c_\varkappa^2 B }{4}\big\langle Y_0(\Psi+\varkappa^{-1}\zeta)\cos\zeta\big\rangle_{\varkappa\zeta}-\frac{1}{3}\right) + \nu^{-1} \frac{2\omega_2 c_\varkappa^2}{3\omega_0}
  \\
     & + \nu^{-1}\frac{c_\varkappa^4 B}{4}\left(\big\langle Y_2(\Psi+\varkappa^{-1}\zeta)\cos\zeta\big\rangle_{\varkappa\zeta}+ \frac{2\omega_2}{\omega_0}\big\langle Y_0(\Psi+\varkappa^{-1}\zeta)\cos\zeta\big\rangle_{\varkappa\zeta}\right),
  \\
   \Omega_0 \equiv &  \omega_0 \nu^{-\frac 12}c_\varkappa^{-1} R, \\
    \Omega_4 \equiv & \nu^{- 1}\left( R \omega_2 c_\varkappa (\omega_2-1)-\frac{\omega_0 c_\varkappa^2 B}{4}\big\langle X_0(\Psi+\varkappa^{-1}\zeta)\cos\zeta\big\rangle_{\varkappa\zeta}\right).
\end{align*}
The parameters $\omega_k$ and $2\pi$-periodic functions $X_k(\phi)$, $Y_k(\phi)$ are defined in \eqref{omega}, \eqref{XY} with $h=2$. It is easily shown that $\langle Z(\Psi+\varkappa^{-1}\zeta)\cos\zeta\rangle_{\varkappa\zeta}\equiv \langle Z(\varkappa^{-1}\zeta) \cos\zeta\rangle_{\varkappa\zeta} \cos(\varkappa\Psi) +\langle Z(\varkappa^{-1}\zeta) \sin\zeta\rangle_{\varkappa\zeta} \sin(\varkappa\Psi)$, for any continuous $2\pi$-periodic function $Z(\phi)$.

Consider resonant solutions with $\varkappa=2m-1$,  $m\in\mathbb Z_+$. In this case, $\langle X_0 (\Psi+\varkappa^{-1}\zeta )\cos\zeta\rangle_{\varkappa\zeta}\equiv  (x_m/2) \cos(\varkappa\Psi)$ and $\langle Y_0 (\Psi+\varkappa^{-1}\zeta )\cos\zeta\rangle_{\varkappa\zeta}\equiv -(\pi \varkappa x_m /\kappa)\sin(\varkappa\Psi)$. Hence, the condition \eqref{as0} holds with $L=0$ and
\begin{gather*}
  \Lambda_0(R,\Psi)\equiv -\nu^{-\frac 12}\frac{\pi B \varkappa x_m c_\varkappa^2}{4\kappa}\Big(\sin(\varkappa\Psi) +\frac{s^2}{B d_\varkappa}\Big), \quad d_\varkappa:=\frac{27\pi^4 \varkappa^3{\hbox{\rm sech}}\big(\frac{\varkappa\pi}{2}\big)}{2\sqrt 2\kappa^4 }, \quad c_\varkappa=\frac{3  \varkappa \pi}{2 \kappa s}.
\end{gather*}
We see that if ${ s^{2}}/{|B|}<d_\varkappa$, there exists $\varphi_0$ such that $\Lambda_0(0,\varphi_0)=0$ and $\lambda_0=\partial_\Psi\Lambda_0(0,\varphi_0)<0$ (see Fig.~\ref{Fig2}, a). Moreover, it can easily be checked that the conditions \eqref{as1}, \eqref{asg} are satisfied with $D=4$  and $\gamma_4=\partial_R\Lambda_4(0,\varphi_0)+\partial_\Psi\Omega_4(0,\varphi_0)=-1/4$. It follows that equation \eqref{ex} satisfies the assumptions of Theorem~\ref{Th3} with $\hat\gamma_D=\gamma_D<0$, $M+D=N$. Hence, the phase locking regime with $\varkappa=2m-1$ is stable, and the resonant solutions of equation \eqref{ex} have the following asymptotics:
\begin{equation}\label{asympt}
\begin{aligned}
    &x(t)=t^{\frac{1}{3}} \sqrt 2   c_\varkappa ^{-1}  {\hbox{\rm cn}} \left(\frac{\kappa \phi(t)}{ \pi\sqrt 2 };\frac{1}{2}\right)\big(1+\mathcal O(t^{-\frac{1}{3}})\big),\\
    & I(t)= c_\varkappa^{-4}t^{\frac{4}{3}} \big(1+\mathcal O(t^{-\frac{1}{3}})\big), \quad   \phi(t)=\varkappa^{-1}S(t)+\varphi_0+\mathcal O(t^{-\frac{1}{6}}), \quad t\to\infty.
    \end{aligned}
\end{equation}

In particular, for primary resonant solutions with $\varkappa=1$ we have
\begin{eqnarray*}
  \varphi_0\in \Big\{ -\arcsin\Big(\frac{s^2}{B   d_1}\Big)+2\pi k, \ \ k\in\mathbb Z\Big\} & \quad & \text{if} \quad   B> B_1;\\
  \varphi_0\in  \Big\{\pi+ \arcsin\Big(\frac{s^2}{B   d_1}\Big)+ 2 \pi k, \ \ k\in\mathbb Z\Big\} & \quad & \text{if} \quad   B<-  B_1,
\end{eqnarray*}
where $B_1=s^2/d_1$  (see Fig.~\ref{Fig2}, b, c).

Note that $c_\varkappa^{-4}=\mathcal O(e^{-\varkappa \pi})$ as $\varkappa\to\infty$ for resonant solutions. Hence, if $\varkappa\gg 1$ and $B=\mathcal O(1)$, a significant increase of the energy can be achieved over a sufficiently long time interval. In this case, the admissible values of the parameters $(s,B)$ lie in a rather narrow domain: $s^2/B=\mathcal O(\varkappa^3 e^{-\varkappa \pi/2})$ as $\varkappa\to\infty$.
\begin{figure}
\centering
\subfigure[]
    {
     \includegraphics[width=0.42\linewidth]{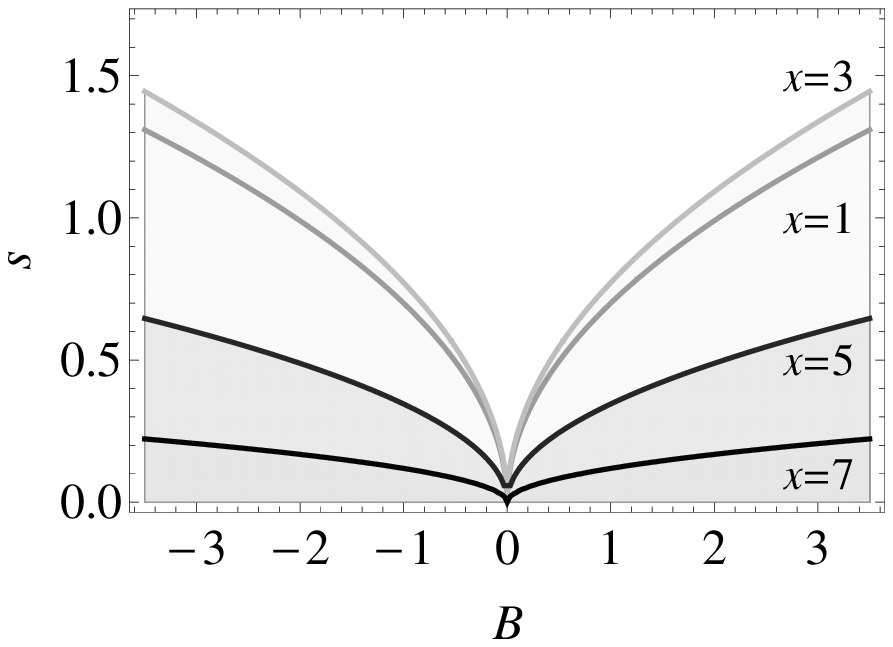}
    }
\subfigure[]
    {
     \includegraphics[width=0.42\linewidth]{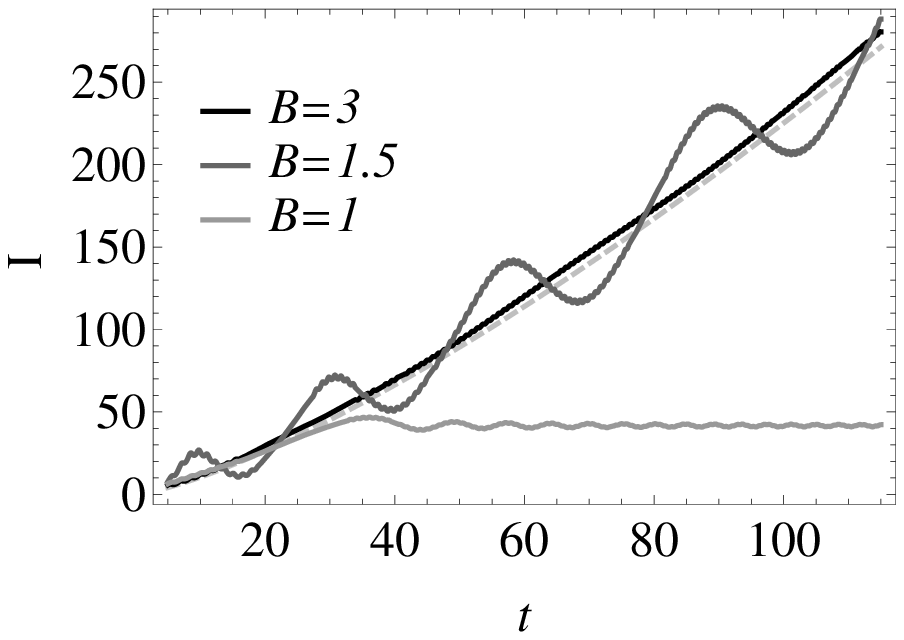}
    }\\
\subfigure[]
    {
     \includegraphics[width=0.3\linewidth]{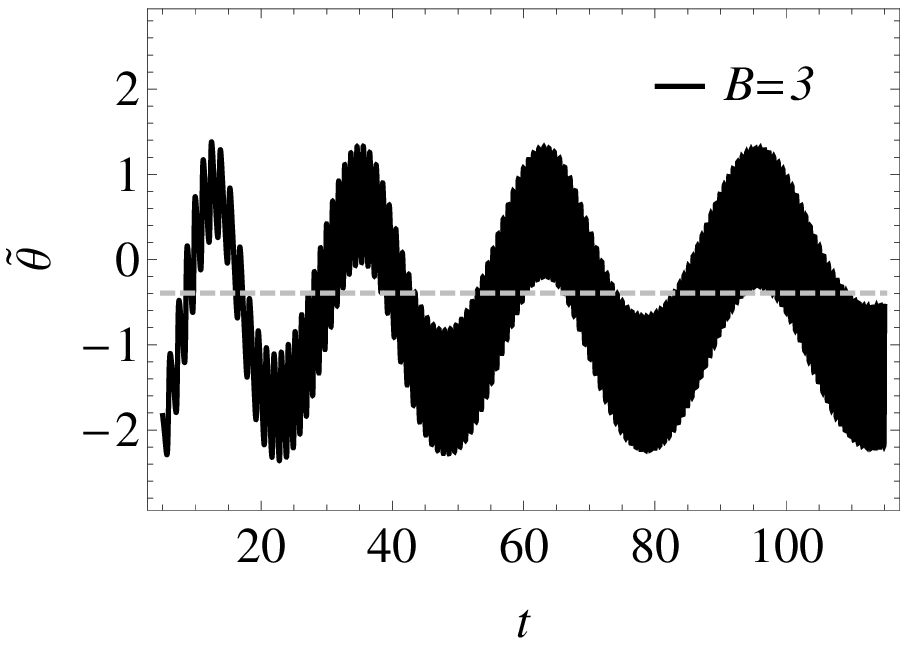} \ \
     \includegraphics[width=0.3\linewidth]{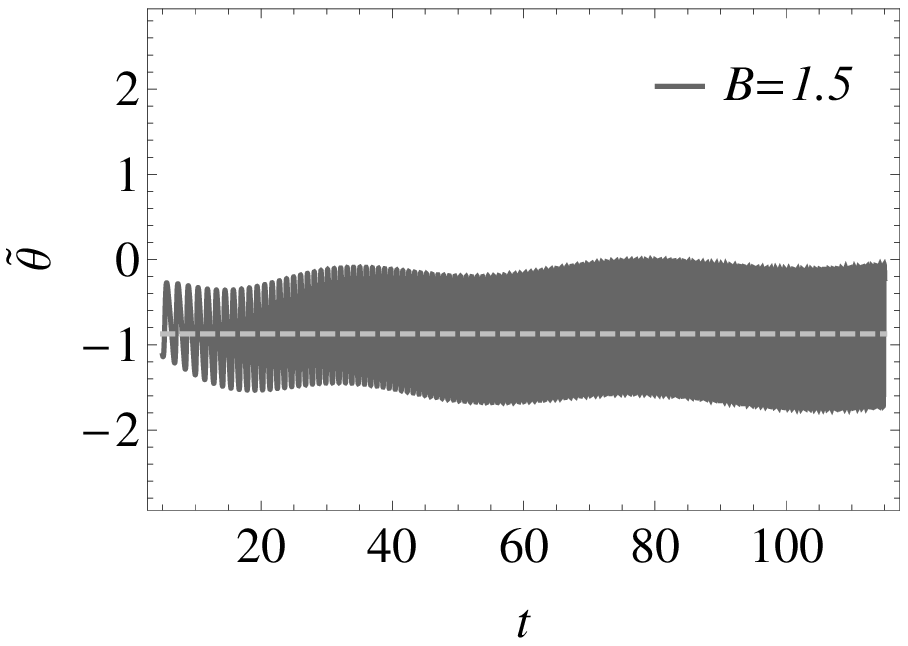} \ \
     \includegraphics[width=0.3\linewidth]{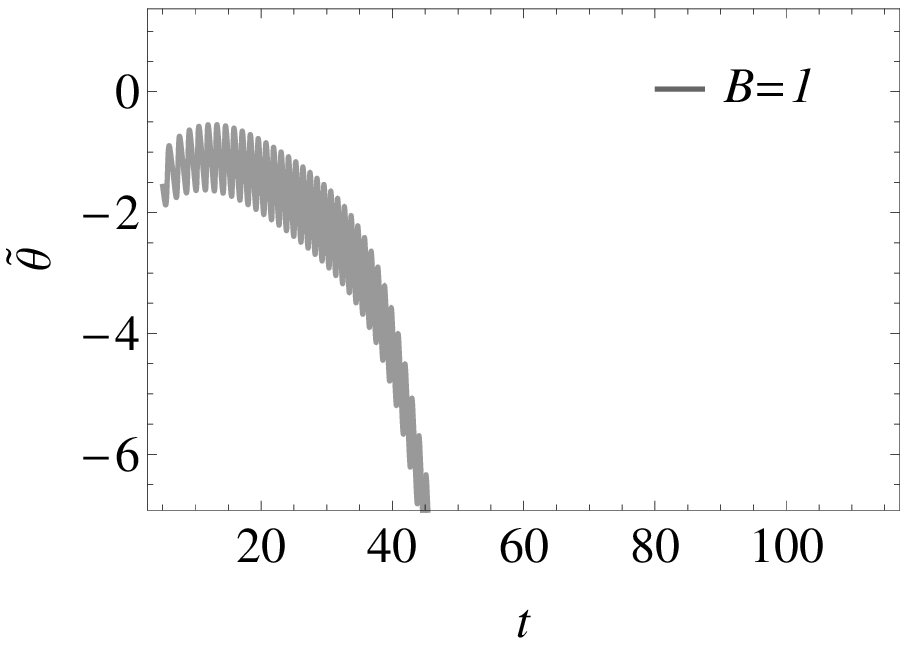}
    }
\caption{\footnotesize (a) Partition of the parameter plane $(B,s)$ for equation \eqref{ex} with different values of $\varkappa$. (b), (c) The evolution of $I(t)= H(x(t),\dot x(t))$ and $\tilde\theta(t)=\tilde\phi(t)-S(t)$, $\tan\tilde\phi(t)=-\dot x(t)/x(t)$ for solutions of \eqref{ex} with $q=3$, $b=1$, $s=0.75$, ($B_1\approx 1.15$) and different values of the parameter $B$. (b) The gray dashed curve corresponds to $ c_1^{-4} t^{\frac 43}$, $c_1^{-4}\approx 0.485$. (c) The gray dashed lines correspond to $\tilde\theta=\varphi_0$. } \label{Fig2}
\end{figure}

{\bf 2}. Consider the equation with a decreasing parametric perturbation:
\begin{gather}\label{ex2}
  \frac{d^2 x}{dt^2}-\big(1+Bt^{-\frac{a}{q}} \cos S(t)\big)x+x^3=0, \quad S(t)=s t^{1+\frac bq}.
\end{gather}
It is easy to verify that equation \eqref{ex2} in the variables $x,y= \dot x$ takes the form \eqref{FulSys} with $h=2$,  $U(x)\equiv x^4/4-x^2/2$, and satisfies \eqref{fg} with $l=0$, $p=1$, $f\equiv 0 $, $g\equiv B_{0,1,0}(S) x$, $ B_{0,1,0}(S)\equiv \cos S$. If $a+b\leq q$, then the condition \eqref{rc} is satisfied with $\sigma=-(a+b)/q$. Consider the case $q=3$, $a=2$, $b=1$. From \eqref{munu} it follows that $\mu=2/3$, $\nu=4/3$, $M=4$, $N=8$, and the corresponding averaged system \eqref{RPsiAv} takes the form
\begin{gather}\label{ex21}
  \frac{dR}{d\tau}=\tau^{-\frac{1}{2}}\sum_{K=0}^4\tau^{-\frac{K}{8}} \Lambda_K(R,\Psi)+\mathcal   O(\tau^{-\frac{9}{8}}),\quad
  \frac{d\Psi}{d\tau}=\tau^{-\frac{1}{2}}\sum_{K=0}^4\tau^{-\frac{K}{8}}  \Omega_K(R,\Psi)+\mathcal   O(\tau^{-\frac{9}{8}})
\end{gather}
as $\tau\to\infty$, with $\tau=(3/4)t^{4/3}$, $\Lambda_{1}\equiv \Lambda_{3}\equiv \Omega_{1}\equiv \Omega_{2}\equiv \Omega_{3}\equiv 0$,
\begin{align*}
   \Lambda_{0}  \equiv &  \nu^{-\frac 12} \left(\frac{c_\varkappa B}{4} \big\langle X_0  (\Psi+\varkappa^{-1}\zeta ) Y_0 (\Psi+\varkappa^{-1}\zeta )\cos\zeta\big\rangle_{\varkappa\zeta}-\frac{1}{3}\right), \\
   \Lambda_{2}\equiv & \nu^{-\frac 34} \frac{c_\varkappa^2 B}{4} \sum_{i+j=1} \big\langle X_i(\Psi+\varkappa^{-1}\zeta) Y_j(\Psi+\varkappa^{-1}\zeta)\cos\zeta\big\rangle_{\varkappa\zeta},\\
   \Lambda_{4}\equiv & \nu^{-1} \frac{c_\varkappa^2 B}{4} \sum_{i+j=2} \big\langle X_i(\Psi+\varkappa^{-1}\zeta) Y_j(\Psi+\varkappa^{-1}\zeta)\cos\zeta\big\rangle_{\varkappa\zeta}+\nu^{-1}\frac{R}{3}\\
   &+\nu^{-1}\frac{\omega_2 c_\varkappa^2}{\omega_0}\left(\frac{c_\varkappa B}{4}\big \langle X_0  (\Psi+\varkappa^{-1}\zeta ) Y_0 (\Psi+\varkappa^{-1}\zeta )\cos\zeta\big\rangle_{\varkappa\zeta}+\frac{2}{3}\right),\\
   \Omega_0 \equiv &  \omega_0 \nu^{-\frac 12}c_\varkappa^{-1} R, \\
    \Omega_4 \equiv & \nu^{- 1}\left((\omega_0z_2c_\varkappa^{-1}-\omega_2 c_\varkappa)R-\frac{\omega_0 c_\varkappa B}{4}\big\langle  X_0^2(\Psi+\varkappa^{-1}\zeta)  \cos\zeta\big\rangle_{\varkappa\zeta}\right).
\end{align*}
Consider a phase locking with $\varkappa=2$. In this case, system \eqref{ex21} satisfies the condition \eqref{as0} with $L=0$ and
\begin{gather*}
  \Lambda_0(R,\Psi)\equiv \nu^{-\frac{1}{2}} \frac{ c_2  B a_{11}}{4} \Big(\sin (2\Psi)+\frac{s}{B d_2}\Big),
\end{gather*}
where
\begin{gather*}
   a_{11}= \big\langle X_0  (\zeta ) Y_0 (\zeta )\sin(2\zeta)\big\rangle_{\zeta} = -\frac{\pi x_1^2}{2\kappa}-\frac{\pi}{\kappa}\sum_{j=1}^\infty x_j x_{j+1}, \quad d_2:=-\frac{9\pi a_{11}}{4\kappa}>0, \quad c_2=\frac{3\pi}{\kappa s}.
\end{gather*}
It follows that if  ${s}/{|B|}<  d_2$, there exists $\varphi_0$ such that $\Lambda_0(0,\varphi_0)=0$ and $\lambda_0=\partial_\Psi\Lambda_0(0,\varphi_0)<0$ (see Fig.~\ref{Fig3}, a). In particular,
\begin{eqnarray*}
  \varphi_0\in \Big\{-\frac12 \arcsin\Big(\frac{s}{B   d_2}\Big)+\pi k, \ \ k\in\mathbb Z\Big\} &\quad & \text{if} \quad   B> B_2;\\
  \varphi_0\in  \Big\{\frac{\pi}{2}+\frac 12 \arcsin\Big(\frac{s}{B   d_2}\Big)+\pi k, \ \ k\in\mathbb Z\Big\} & \quad & \text{if} \quad   B<-  B_2,
\end{eqnarray*}
where $  B_2=s/  d_2$.
Hence, the condition \eqref{as1} holds.
Since
\begin{gather*}
  \partial_\Psi \big \langle  X_0^2  (\Psi+\varkappa^{-1}\zeta )  \cos\zeta\big\rangle_{\varkappa\zeta}= \frac{2}{\omega_0} \big\langle X_0  (\Psi+\varkappa^{-1}\zeta ) Y_0 (\Psi+\varkappa^{-1}\zeta )\cos\zeta\big\rangle_{\varkappa\zeta} =\frac{\kappa a_{11} }{\pi}\sin(2\Psi),
\end{gather*}
we have $\gamma_4= \partial_R \Lambda_4(0,\varphi_0)+\partial_\Psi \Omega_4(0,\varphi_0)= -1/4$. Consequently, the condition \eqref{asg} is satisfied with $D=4$ and $\gamma_4\neq 0$. Therefore, equation \eqref{ex2} satisfies the assumptions of Theorem~\ref{Th3} with $\hat\gamma_D=\gamma_D<0$, $M+D=N$. Hence, the phase locking regime is stable and the solutions of \eqref{ex2} corresponding to parametric resonance have asymptotics \eqref{asympt} with $\varkappa=2$ (see Fig.~\ref{Fig3}, b, c).
\begin{figure}
\centering
\subfigure[]
    {
     \includegraphics[width=0.42\linewidth]{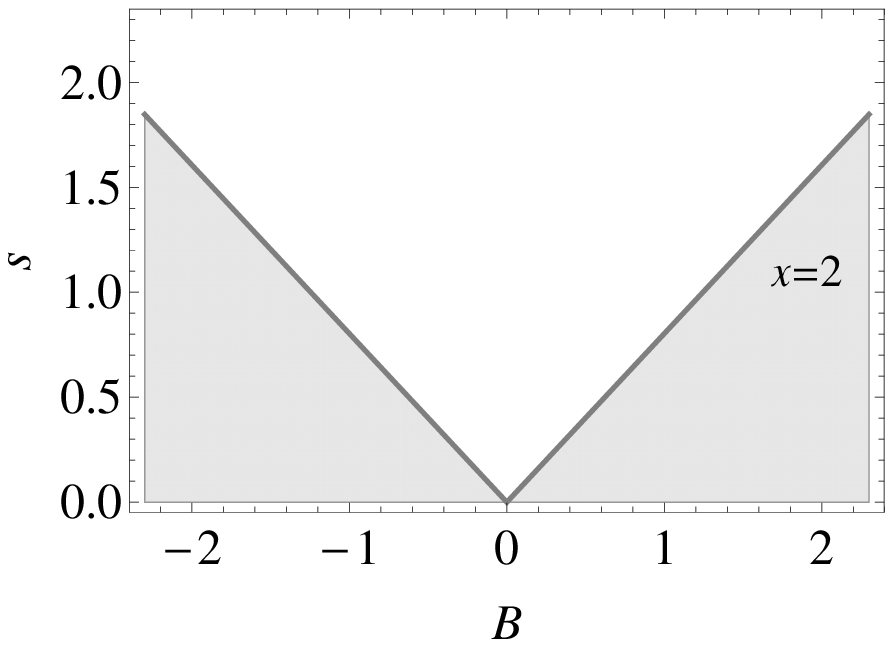}
    }
\subfigure[]
    {
     \includegraphics[width=0.42\linewidth]{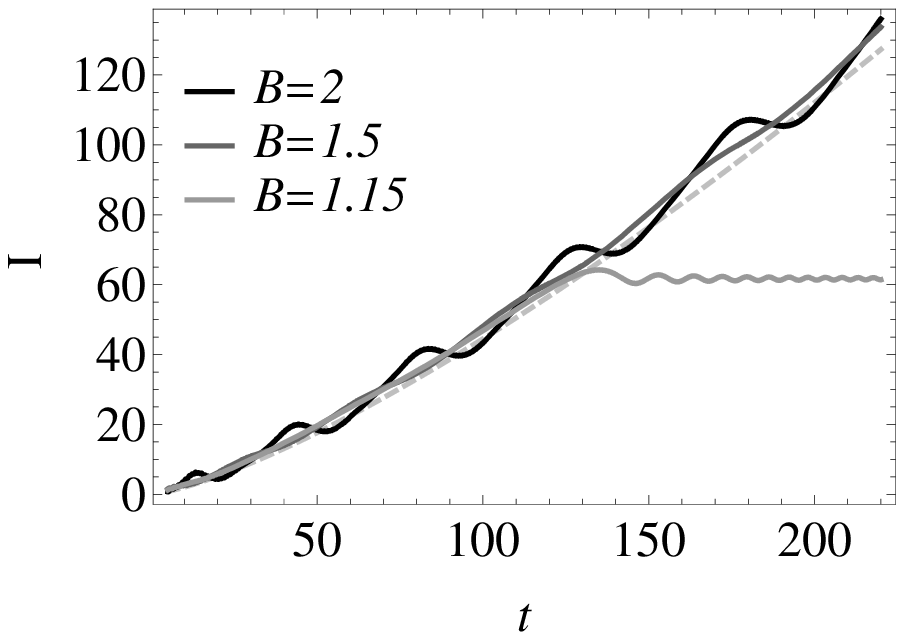}
    }\\
\subfigure[]
    {
     \includegraphics[width=0.3\linewidth]{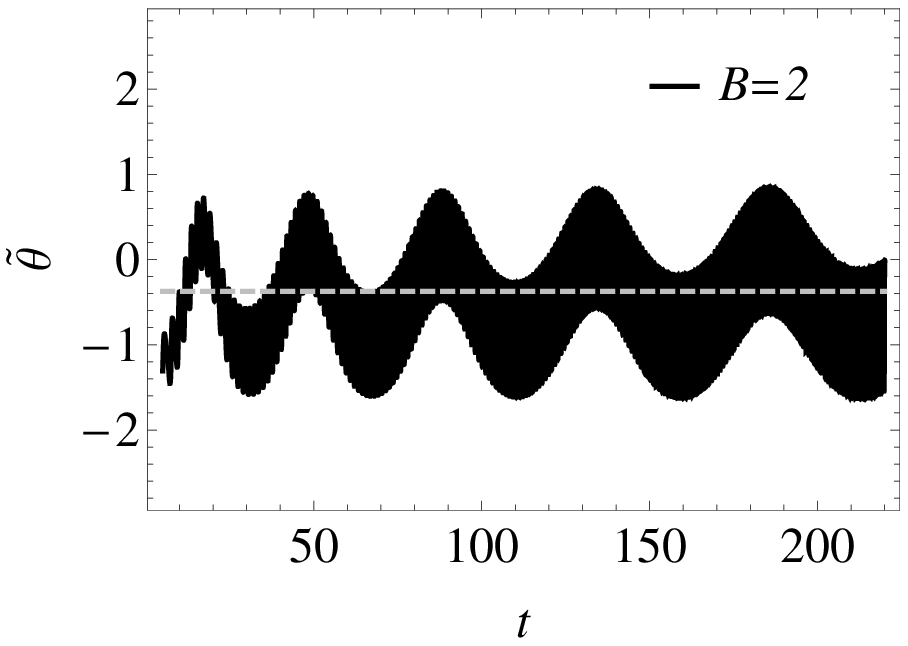} \ \
     \includegraphics[width=0.3\linewidth]{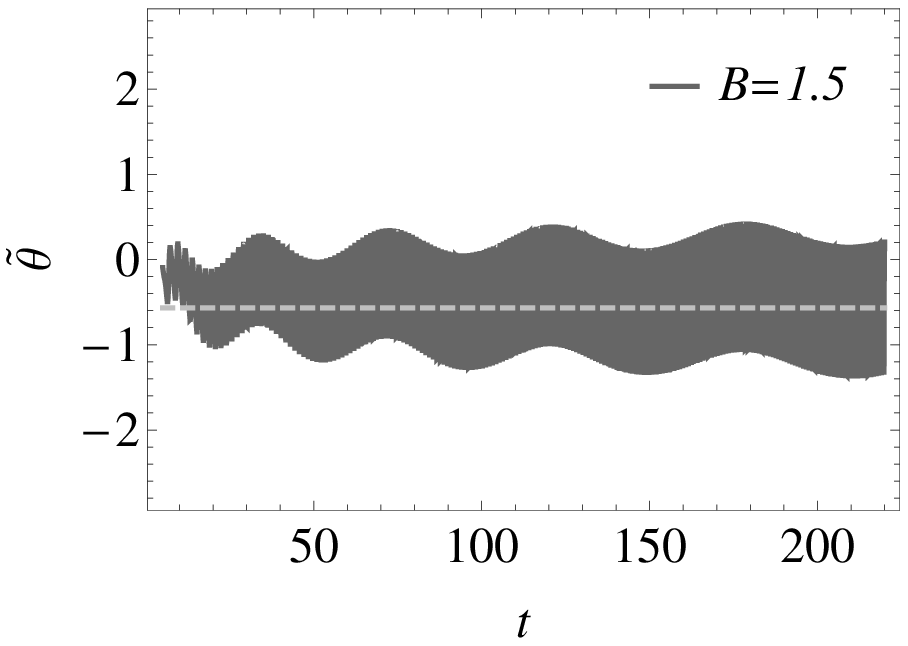} \ \
     \includegraphics[width=0.3\linewidth]{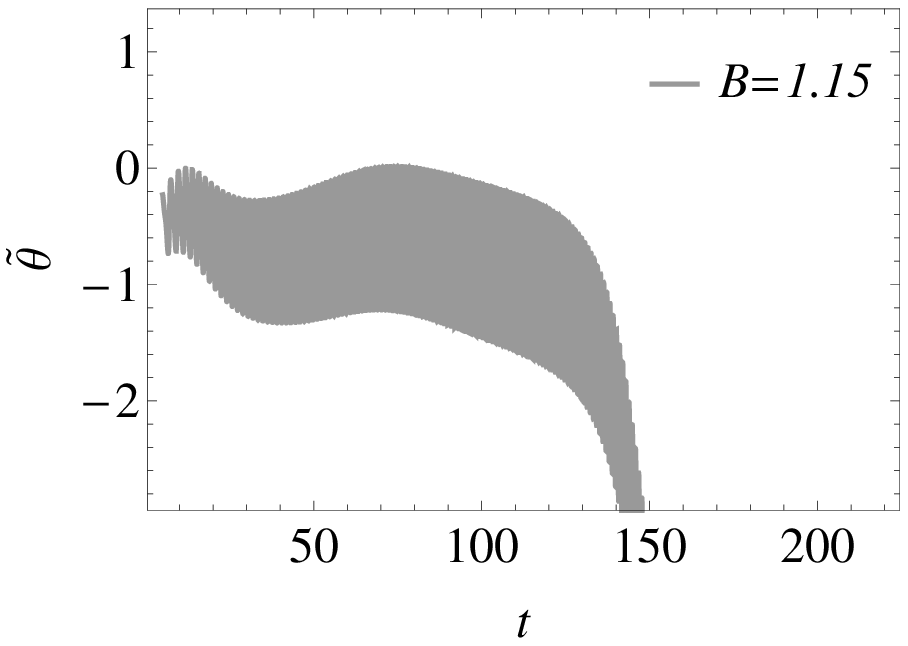}
    }
\caption{\footnotesize  (a) Partition of the parameter plane $(B,s)$ for equation \eqref{ex2} with $\varkappa=2$. (b), (c) The evolution of $I(t)= H(x(t),\dot x(t))$ and $\tilde\theta(t)=\tilde\phi(t)-S(t)/2$, $\tan\tilde\phi(t)=-\dot x(t)/x(t)$ for solutions of \eqref{ex2} with $q=3$, $a=2$, $b=1$, $s=1$, ($B_2\approx 1.24$) and different values of the parameter $B$. (b) The gray dashed curve corresponds to $c_2^{-4} t^{\frac 43}$, $c_2^{-4}\approx 0.095$. (c) The gray dashed lines correspond to $\tilde\theta=\varphi_0$. } \label{Fig3}
\end{figure}

{\bf 3}. Finally, consider a system with a nonlinear parametric perturbation and a weak nonlinear damping:
\begin{gather}\label{ex3}
  \frac{d^2 x}{dt^2}-x+\big(1-Bt^{-\frac{a}{q}} \cos S(t)\big)x^3+C t^{-\frac{a+1}{q}}x^2 \frac{dx}{dt}=0, \quad S(t)=s t^{1+\frac bq},
\end{gather}
where $B,C={\hbox{\rm const}}$, $C>0$. It is readily seen that \eqref{ex2} takes the form \eqref{FulSys} with $h=2$, $U(x)\equiv x^4/4-x^2/2$, and satisfies \eqref{fg} with $l=1$, $p=3$,  $f\equiv 0 $, $g\equiv B_{0,3,0}(S) x^3+ t^{-1/q}  B_{1,2,1}(S) x^2 y$, $B_{0,3,0}(S)\equiv B\cos S$, $B_{1,2,1}(S)\equiv -C$. Note that if $b<a$, then the condition \eqref{rc} holds with $\sigma=(2b-a)/q$. We take $a=q=3$ and $b=1$. Then $\sigma=-2/3$, $\mu=1/3$, $\nu=4/3$, $M=2$, $N=8$, and the corresponding averaged system \eqref{RPsiAv} takes the following form:
\begin{gather*}
  \frac{dR}{d\tau}=\tau^{-\frac{1}{4}}\sum_{K=0}^4\tau^{-\frac{K}{8}} \Lambda_K(R,\Psi)+\mathcal   O(\tau^{-1}),\quad
  \frac{d\Psi}{d\tau}=\tau^{-\frac{1}{4}}\sum_{K=0}^4\tau^{-\frac{K}{8}}  \Omega_K(R,\Psi)+\mathcal   O(\tau^{-1}), \quad \tau\to\infty,
\end{gather*}
with $\tau=(3/4) t^{4/3}$, $\Lambda_{0}\equiv\Lambda_{1}\equiv \Lambda_{3}\equiv \Omega_{1}\equiv \Omega_{2}\equiv \Omega_{3}\equiv 0$,
\begin{align*}
   \Lambda_{2}  \equiv &  -\nu^{-\frac 12} \frac{C}{4 c_\varkappa^2} \big\langle X_0^2  (\Psi+\varkappa^{-1}\zeta ) Y_0^2 (\Psi+\varkappa^{-1}\zeta )\big\rangle_{\varkappa\zeta}+\nu^{-\frac 12}\frac{B}{4c_\varkappa}\big\langle X_0^3  (\Psi+\varkappa^{-1}\zeta ) Y_0 (\Psi+\varkappa^{-1}\zeta )\cos\zeta\big\rangle_{\varkappa\zeta}, \\
   \Lambda_{4}\equiv &  -\nu^{-\frac 34} \frac{3C R}{4 c_\varkappa^2} \big\langle X_0^2  (\Psi+\varkappa^{-1}\zeta ) Y_0^2 (\Psi+\varkappa^{-1}\zeta )\big\rangle_{\varkappa\zeta}+\nu^{-\frac 34}\frac{B R}{2c_\varkappa}\big\langle X_0^3  (\Psi+\varkappa^{-1}\zeta ) Y_0 (\Psi+\varkappa^{-1}\zeta )\cos\zeta\big\rangle_{\varkappa\zeta}\\
   &-\nu^{-\frac 34}\frac{C}{2c_\varkappa}\big\langle X_0 Y_0 (X_1Y_0+X_0Y_1) \big\rangle_{\varkappa\zeta}+\nu^{-\frac 34}\frac{B}{4}\big\langle X_0^2(X_0Y_1+3X_1Y_0)\cos\zeta\big\rangle_{\varkappa\zeta}-\nu^{-\frac 34}\frac{1}{3},\\
   \Omega_0 \equiv &  \omega_0 \nu^{-\frac 14}c_\varkappa^{-1} R, \\
   \Omega_4 \equiv & \nu^{- \frac 34}\left(-2c_\varkappa\omega_2 R-\frac{\omega_0 B}{4 c_\varkappa}\big\langle  X_0^4(\Psi+\varkappa^{-1}\zeta)   \cos\zeta\big\rangle_{\varkappa\zeta}\right).
\end{align*}

It is easy to verify that for resonant solutions with $\varkappa=2$, the condition \eqref{as0} is satisfied with $L=2$ and
\begin{gather*}
  \Lambda_2(R,\Psi)\equiv \nu^{-\frac{1}{2}} \frac{ B  a_{31} \kappa s}{12\pi} \Big(\sin (2\Psi)+\frac{sC}{  B d_2 }\Big), \quad  d_2:=-\frac{3 \pi a_{31}}{\kappa v_{22} }>0,
\end{gather*}
where
$v_{22}\equiv\big\langle X_0^2  (\zeta ) Y_0^2 (\zeta )\big\rangle_{\zeta}>0$, $a_{31}\equiv  \big\langle X_0^3  (\zeta ) Y_0 (\zeta )\sin2\zeta\big\rangle_{\zeta}\approx x_1^2 a_{11}/2<0$. Hence, if  $sC/|B|<d_2$, the condition \eqref{as1} holds (see Fig.~\ref{Fig4}, a): there exists $\varphi_0$ such that $\Lambda_2(0,\varphi_0)=0$ and $\lambda_2=\partial_\Psi\Lambda_2(0,\varphi_0)<0$. In this case, we have
\begin{eqnarray*}
  \varphi_0\in\Big\{ -\frac 12 \arcsin\Big(\frac{s C}{B d_2}\Big)+\pi k, \ \ k\in\mathbb Z\Big\} &\quad & \text{if} \quad  B> B_2;\\
  \varphi_0\in \Big\{\frac{\pi}{2}+ \frac 12\arcsin\Big(\frac{s C}{B d_2}\Big)+ \pi k, \ \ k\in\mathbb Z\Big\}&\quad  & \text{if} \quad  B<-  B_2,
\end{eqnarray*}
where $B_2=s C/d_2$. Moreover, it can easily be checked that the assumption \eqref{asg} is satisfied with  $D=2$ and
\begin{gather*}
  \gamma_4\equiv \partial_R \Lambda_4(0,\varphi_0)+\partial_\Psi \Omega_4(0,\varphi_0)= -5 C v_{22} \nu^{-\frac 34} \Big(\frac{\kappa s}{6\pi}\Big)^2< 0.
\end{gather*}
Thus, by applying Theorem~\ref{Th3} with $\hat\gamma_D=\gamma_D<0$ and $M+D<N$, we see that the phase locking regime is stable and the resonant solutions of \eqref{ex3} have asymptotics \eqref{asympt} with $\varkappa=2$ (see Fig.~\ref{Fig4}, b, c).
\begin{figure}
\centering
\subfigure[]
    {
     \includegraphics[width=0.42\linewidth]{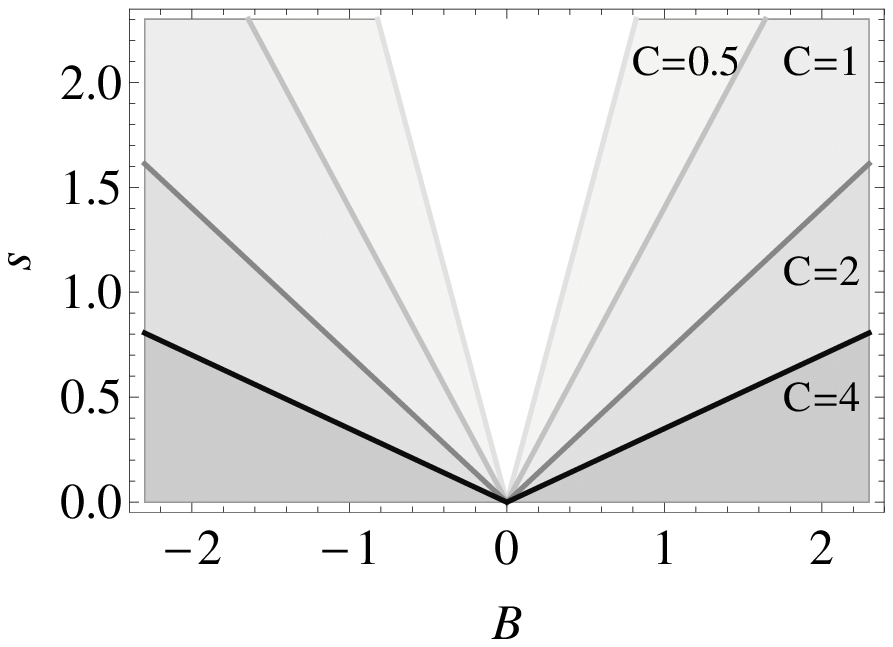}
    }
\subfigure[]
    {
     \includegraphics[width=0.42\linewidth]{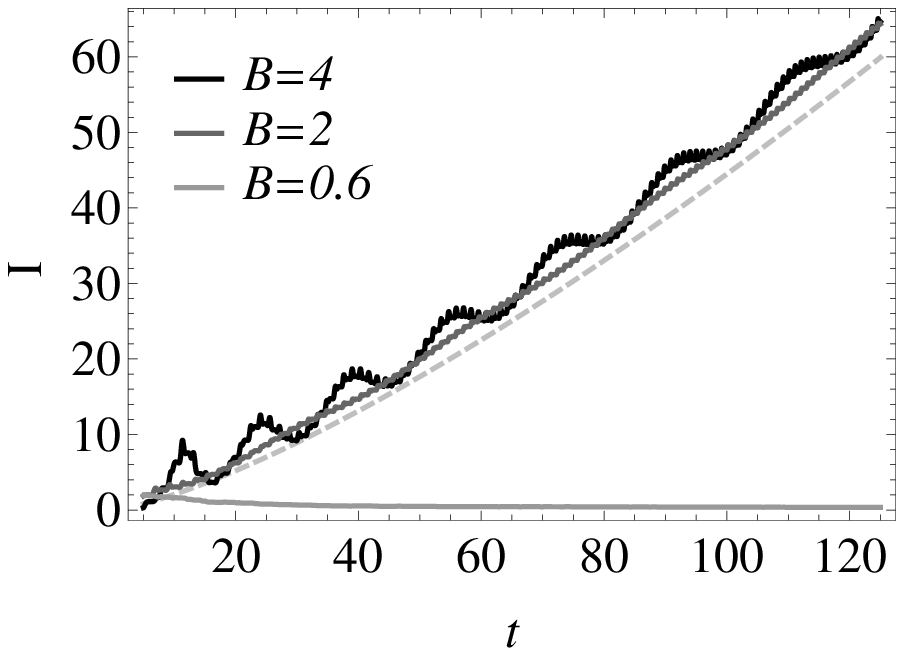}
    }\\
\subfigure[]
    {
    \includegraphics[width=0.3\linewidth]{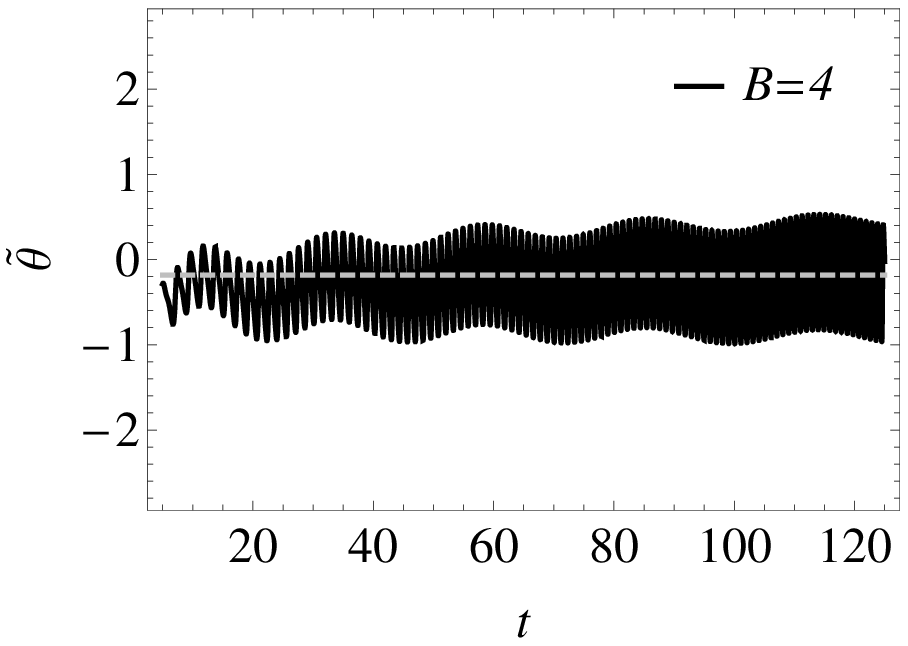} \ \
    \includegraphics[width=0.3\linewidth]{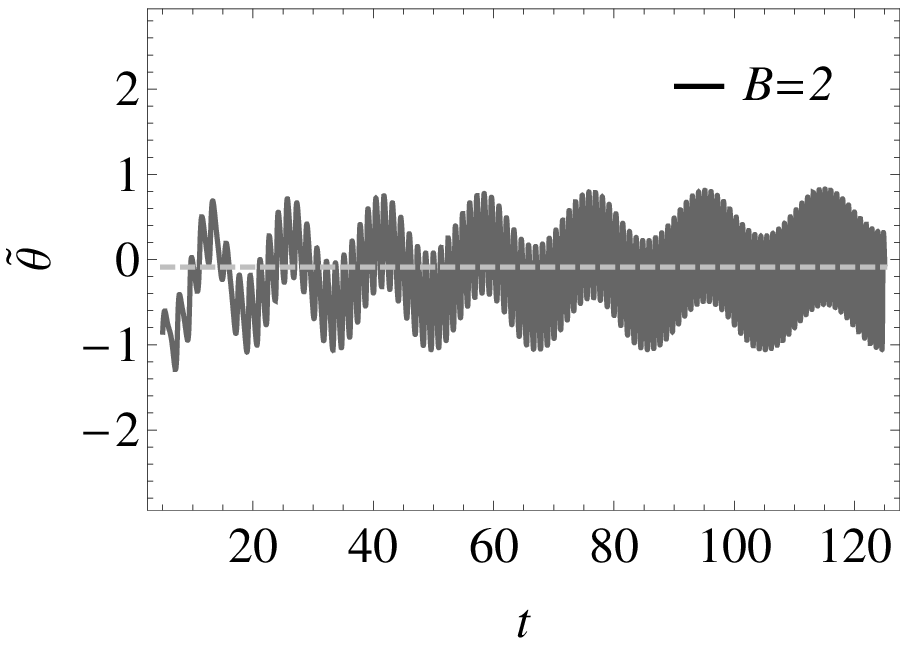} \ \
    \includegraphics[width=0.3\linewidth]{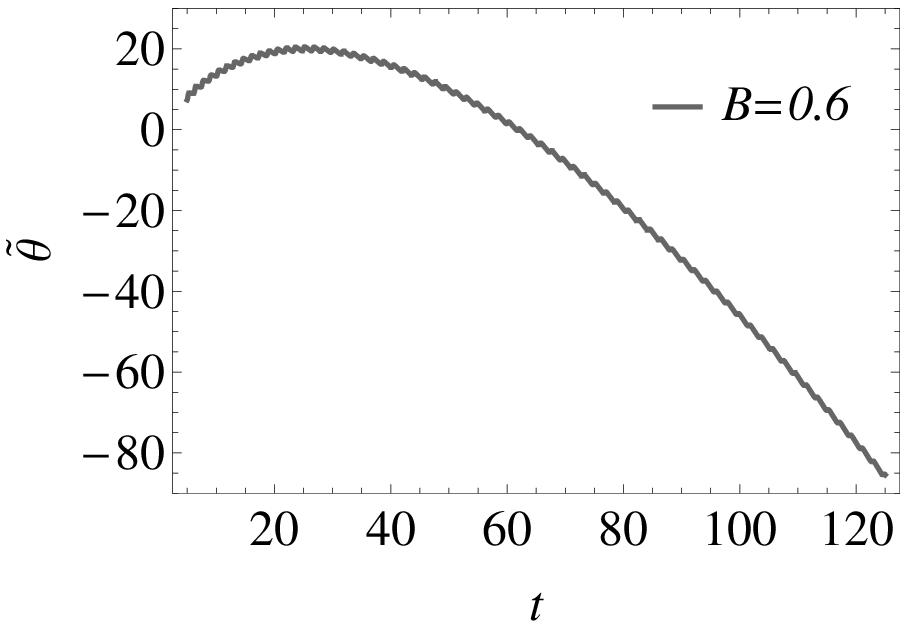}
    }
\caption{\footnotesize (a) Partition of the parameter plane $(B,s)$ for equation \eqref{ex2} with $\varkappa=2$ and different values of the parameter $C$. (b), (c) The evolution of $I(t)= H(x(t),\dot x(t))$ and $\tilde\theta(t)=\tilde\phi(t)-S(t)/2$, $\tan\tilde\phi(t)=-\dot x(t)/x(t)$ for solutions of \eqref{ex3} with $a=q=3$, $b=1$, $s=C=1$, ($ B_2\approx  0.71$) and different values of the parameter $B$. (b) The gray dashed curve corresponds to $ c_2^{-4} t^{\frac 43}$, $c_2^{-4}\approx 0.095$. (c) The gray dashed lines correspond to $\tilde\theta=\varphi_0$. } \label{Fig4}
\end{figure}

\section{Conclusion}

Thus, we have shown that decreasing chirped-frequency oscillatory perturbations of strongly nonlinear Hamiltonian systems in the plane can lead to the appearance of at least two different asymptotic regimes away from the equilibrium: a phase locking and a phase drifting. In the case of phase locking the energy of system can increase significantly and the phase of system is synchronised with the phase of the perturbation. We have described the conditions that guarantee the existence and stability of resonant solutions with growing energy. A violation of these conditions can lead to a phase drifting. Numerical examples show that in this case the energy of the perturbed system remains bounded. Note that such solutions have not been investigated in detail in this paper. This will be discussed elsewhere.

The results obtained show that it is possible to use vanishing in time perturbations for capture and holding of strongly nonlinear systems at resonance.

\section*{Acknowledgments}
Research is supported by the Russian Science Foundation grant 19-71-30002.

}

\end{document}